\numberwithin{equation}{section}
\title{\textbf{Entropy-energy solutions for Thermo-Visco-Elastic systems with Mr\'oz-type inelastic behavior}}
\author{
    \textbf{Tomasz Cie\'slak}\\[0.5ex] 
	\textbf{\footnotesize{ Institute of Mathematics, Polish Academy of Sciences}}\\[-1ex]
	\textbf{\footnotesize{Warsaw, Poland}}\\[-1ex]
	\textbf{\footnotesize{email: cieslak@impan.pl}}\\[1ex]
	\textbf{Sebastian Owczarek}\\[0.5ex] 
	\textbf{\footnotesize{Faculty of Mathematics and
			Information Science, Warsaw University of Technology,}}\\[-1ex]
	\textbf{\footnotesize{Koszykowa 75, 00-662 Warsaw, Poland}}\\[-1ex]
	\textbf{\footnotesize{email: sebastian.owczarek@pw.edu.pl}}\\[1ex]
    \textbf{Karolina Wielgos}\\[0.5ex] 
	\textbf{\footnotesize{Faculty of Mathematics and
			Information Science, Warsaw University of Technology,}}\\[-1ex]
	\textbf{\footnotesize{Koszykowa 75, 00-662 Warsaw, Poland}}\\[-1ex]
	\textbf{\footnotesize{email: karolina.wielgos.dokt@pw.edu.pl}}\\[1ex]
 }
\date{}
\newtheorem{definition}{Definition}[section]
\newtheorem{theorem}[definition]{Theorem}
\newtheorem{proposition}[definition]{Proposition}
\newtheorem{remark}[definition]{Remark}
\newtheorem{lemma}[definition]{Lemma}
\newcommand*{\norm}[1]{\left\Vert{#1}\right\Vert}
\newcommand*{\diver}[1]{\ensuremath{\operatorname{div}{#1}}}
\newcommand*{\spa}[1]{\ensuremath{\operatorname{span}{#1}}}
\newcommand*{\abs}[1]{\left\vert{#1}\right\vert}
\begin{document}
	\maketitle
\begin{abstract}
\noindent
In this article, we study a thermodynamically consistent thermo-visco-elastic model describing the balance of internal energy in a heat-conducting inelastic body. In the considered problem, the temperature dependence appears in both the elastic and inelastic constitutive relations. For such a system, we introduce the concept of a weak entropy-energy solution, which satisfies the entropy equality instead of the internal energy equation. Although the model does not possess any mathematically favorable structural properties, such as Kelvin-Voigt type effects or simplifications that eliminate temperature from the constitutive relations, we prove the global-in-time existence of weak entropy-energy solutions for large initial data.

\end{abstract}
 \footnotesize

 \normalsize
	
	\newcommand{\bl}{\backslash}
	\newcommand{\nn}{\nonumber}
	\newcommand{\KK}{\sigma_{\rm y}}
	\newcommand{\ve}{\varepsilon}
	\newcommand{\ep}{\epsilon}
	\newcommand{\R}{{\mathbb R}}  
	\newcommand{\D}{{\mathbb C}}
	\newcommand{\T}{{\mathfrak T}}
	\newcommand{\TC}{{\mathcal T}}
	\newcommand{\E}{{\cal E}}
	\newcommand{\K}{{\cal K}}
	\newcommand{\di}{{\mathrm d}}
	\renewcommand{\S}{{\cal S}}
	\renewcommand{\SS}{{\cal S}_{\mathrm{dev}}}
	\newcommand{\id}{ {1\!\!\!\:1 } }
	\def\div{\rm{div\,}}
\section{Introduction}
As a mathematical subject, the theory of elasticity goes back to Cauchy and has again flourished since the mid-twentieth century \cite{Cauchy_first}. Thermo-elasticity and its extensions to thermo-visco-elasticity present substantial mathematical challenges due to the nonlinear coupling between thermal and mechanical effects.

The study of nonlinear thermoelastic systems has a long tradition. In the one-dimensional case, local and global existence of smooth solutions was established under smallness assumptions \cite{Racke_1D,Slemrod_ARMA,DAFERMOS1982,Jiang_1990}, while finite-time blow-up for large data was demonstrated in \cite{Dafermos_Hsiao_blowup}. Global weak solutions have been obtained \cite{RACKE_Zheng1997}, and their long-time behavior has been investigated \cite{Qin_2001,Bies_cieslak_string}. Still, even in $1-$D the question of global-in-time smooth solutions remained open for decades. Related adiabatic thermo-elastic and thermo-visco-elastic systems, in which the heat flux is neglected, have been studied extensively in recent years by Christoforou and Tzavaras and collaborators \cite{ChristoforouTzavaras2018,ChristoforouGalanopoulouTzavaras2019}, mainly within the relative entropy framework.

In \cite{Bies_Cieslak_1D} authors constructed global-in-time unique regular solutions with strictly positive temperature for the one-dimensional nonlinear thermoelastic system. Their method, relying on new estimates involving the Fisher information combined with refined energy inequalities, allowed them to overcome the long-standing difficulty of maintaining the positivity of temperature. This result essentially settles the one-dimensional problem at the level of regular solutions.

In higher dimensions, the situation is considerably more involved. Available results \cite{Racke1990,Shibata1995,BlanchardGuibe00,ChelminskiOwczarekthermoII} are restricted to special assumptions, such as radial symmetry, the smallness of initial data, or additional structural simplifications. Furthermore, extensions with higher-order regularizing terms \cite{YoshikawaPawlowZajaczkowski2009,RossiRoubicek2013,Milke_Roubicek,ChelminskiOwczarekWielgos25} or internal variables modeling plasticity \cite{BarlesRoubicek,PaoliPetrov,Roubicek} have been analyzed, but the general multi-dimensional problem remains open. For systems with inelastic effects and temperature-dependent material coefficients, to the best of our knowledge, no global existence theory is currently available. The main difficulty stems from the nonlinear dissipative term appearing in the heat equation, which is generally not expected to be integrable even in the weak formulation, and whose compactness properties are out of reach in two and three spatial dimensions (see, e.g., the available one-dimensional results).

In this work, we investigate the existence of solutions to the original thermo-visco-elastic system describing inelastic deformations in solids subjected to thermal effects. The system under consideration is derived directly from the fundamental laws of mechanics, without incorporating any additional modeling features such as Kelvin-Voigt-type rheology (see for example \cite{Roubicek,RoubicekL1,ChelminskiOwczarekWielgos25}), external body forces in the momentum balance (\cite{ChelminskiOwczarekthermoII,OwczarekWielgos23}), or simplifications like the elimination of temperature from the constitutive relations (\cite{ChelminskiOwczarekthermoI,barowcz2}). Moreover, the inelastic constitutive relation is of Mr\'oz-type, capturing complex irreversible mechanical effects, which makes the analysis of global solvability significantly more challenging.
\subsection{Problem formulation}
\renewcommand{\theequation}{\thesection.\arabic{equation}}
\setcounter{equation}{0}%
For these reasons, in this article we focus on proving the existence of what we term weak entropy-energy solutions to the following problem: let $\mathfrak{T}>0$  denote the length of the time interval, and let $\Omega\subset\mathbb{R}^{3}$  be a bounded domain with smooth boundary $\partial\Omega$.  We seek functions: the displacement field $u:[0,\mathfrak{T}]\times\Omega\to\mathbb{R}^{3}$, the stress tensor $\mathbb{T}:[0,\mathfrak{T}]\times\Omega\to\mathcal{S}^{3}:=\mathbb{R}^{3\times 3}_{\mathrm{sym}}$ and the temperature function $\theta\!:\![0,\mathfrak{T}]\times\Omega\to\mathbb{R}$, which satisfy the following system of equations
\begin{align}\label{zagadnienie_poczatkowe}
	u_{tt}-\diver{\big(\mathbb{T}-\theta\, 1\!\!\!\:1\big)}&=f\,,  \qquad\qquad\qquad\qquad\qquad \textrm{ in }\quad (0,\mathfrak{T})\times\Omega\,, \nonumber\\
	\mathbb{C}^{-1}\mathbb{T}_{t}+\textrm{G}(\theta,\mathbb{T})&=\varepsilon(u_{t})\,,\,\,\,\qquad\qquad\qquad\qquad \textrm{ in }\quad (0,\mathfrak{T})\times\Omega\,,\\
	\theta_{t}-\Delta \theta+\theta\diver{u_{t}}&= \textrm{G}(\theta,\mathbb{T}):\mathbb{T}\,,\qquad\qquad\qquad \textrm{ in }\quad (0,\mathfrak{T})\times\Omega\,.\nonumber
\end{align}	
System \eqref{zagadnienie_poczatkowe} consists of three coupled equations governing the evolution of the displacement $u$, the elastic stress tensor $\mathbb{T}$, and the temperature $\theta$. The first equation describes the classical balance of linear momentum in a static state, without considering inertia-related terms. The right-hand side,  $f:[0,\mathfrak{T}]\times\Omega\rightarrow \mathbb{R}^{3}$, represents external volume forces acting on the system. The total stress tensor in the system is given by the combination $\mathbb{T}-\theta\, 1\!\!\!\:1$. The strain tensor, indicating the deformation, is expressed as \(\varepsilon(u_t) = \frac{1}{2}(\nabla u_t + \nabla^T u_t)\).

The second equation expresses the inelastic constitutive relation, in which the time derivative of the elastic stress tensor is related to the symmetric part of the velocity gradient. The term $\textrm{G}(\theta,\mathbb{T})$ represents a nonlinear, temperature-dependent dissipative response of the material. Importantly, this equation is of an experimental nature - the functional forms of $\textrm{G}$ found in the literature are derived from physical experiments designed to capture inelastic and rate-dependent behavior under various thermal and mechanical conditions. The literature offers various examples of the function $\textrm{G}$ (see, for instance, \cite{Alber,chegwia1,owcz2,ChelminskiNeffOwczarek2014} and many others), and it is worth noting that the choice of the vector field $\textrm{G}$ leads to different models. The presence of $\mathbb{C}^{-1}:\mathcal{S}^{3}\rightarrow\mathcal{S}^{3}$ reflects the use of a fourth-order elasticity tensor $\mathbb{C}:\mathcal{S}^{3}\rightarrow\mathcal{S}^{3}$.

Finally, the third equation describes the heat conduction process. It incorporates not only diffusion, represented by the Laplacian $-\Delta\theta$, but also a coupling with the mechanical part through the term $\theta\diver{u_{t}}$, which accounts for the mechanical work converted into heat. On the right-hand side, the scalar product $\textrm{G}(\theta,\mathbb{T}):\mathbb{T}$ - where the symbol "$:$" denotes the matrix product - represents the internal heat production due to inelastic effects, and is a key nonlinear feature of the model. This equation is a direct consequence of the first law of thermodynamics, expressing conservation of energy in the form of a local balance between internal energy, heat flux, and dissipation. The details of the derivation of system (1.1) can be found in \cite{ChelminskiOwczarekthermoII,Roubicek}.

The system \eqref{zagadnienie_poczatkowe} is analyzed under homogeneous Dirichlet and Neumann boundary conditions, for the displacement and the temperature, respectively:
	\begin{align}\label{warunki_brzegowe}
	u(t,x)=0\,,& \qquad\qquad\qquad (t,x)\in[0,\mathfrak{T}]\times\partial\Omega\,, \nonumber\\[1ex]
	\frac{\partial\theta(t,x)}{\partial\nu}=0\,, & \qquad\qquad\qquad (t,x)\in[0,\mathfrak{T}]\times\partial\Omega\,,
	\end{align}
	where $\nu$ is the outer-pointing unit normal vector to the boundary $\partial\Omega$. Finally, we adjoin to the system \eqref{zagadnienie_poczatkowe} the following initial conditions for $x\in\Omega$
	\begin{align}\label{warunki_poczatkowe}
	u(0,x)=u_{0}(x)\,, & \quad\quad	u_{t}(0,x)=u_{1}(x)\,, \nonumber\\[1ex]
	\mathbb{T}(0,x)=\mathbb{T}_{0}(x)\,,& \quad\quad \theta(0,x)=\theta_{0}(x)\,.
	\end{align}

\subsection{Entropy equation and total energy dissipation formula}
In this subsection, we present the motivation behind the definition of a weak entropy-energy solution for system \eqref{zagadnienie_poczatkowe}, which will be formally introduced in the next subsection. Assuming that the initial temperature $\theta_{0}>0$, we may - at least formally - expect that the temperature remains strictly positive, i.e., $\theta>0$ throughout the evolution. This allows us to multiply the heat equation by $\frac{1}{\theta}>0$, leading to an identity
\begin{align*}
	\frac{\theta_{t}}{\theta}
	-\frac{\Delta\theta}{\theta}
    +\diver{u}_{t}
    = \frac{\textrm{G}(\theta,\mathbb{T}):\mathbb{T}}{\theta}\,.
	\end{align*}
In particular, this manipulation yields an entropy equation of the form
\begin{align}\label{pointwise_entropy_eq}
	\big(\ln\theta+\diver{u}\big)_{t}
	-\Delta\ln\theta
    = \frac{\textrm{G}(\theta,\mathbb{T}):\mathbb{T}}{\theta}
    +\abs{\nabla\ln\theta}^{2}\,.
\end{align}
Moreover, to complement the entropy balance, we formally derive an associated energy balance. To this end, we multiply equation \eqref{zagadnienie_poczatkowe}$_{1}$ by $u_{t}$, equation \eqref{zagadnienie_poczatkowe}$_{2}$ by $\mathbb{T}$ and equation \eqref{zagadnienie_poczatkowe}$_{3}$ by $1$. We then integrate each equation over the spatial domain $\Omega$ and over the time interval $[0,t]$, where $t\leq \mathfrak{T}$. This procedure formally leads to the following total energy balance identity:
    
\begin{align}
\label{energy_equation}
	&\frac{1}{2} \int\limits_{\Omega}\abs{u_{t}(t)}^{2}\di x
    +\frac{1}{2}\int\limits_{\Omega}\mathbb{C}^{-1}\mathbb{T}(t):\mathbb{T}(t)\,\di x
    +\int\limits_{\Omega}\theta(t)\,\di x\nn\\ 
    &\hspace{2ex} =\int\limits_{0}^{t}\int\limits_{\Omega}fu_{t}\,\di x \di \tau
    +\frac{1}{2} \int\limits_{\Omega}\abs{u_{t}(0)}^{2}\,\di x
    +\frac{1}{2}\int\limits_{\Omega}\mathbb{C}^{-1}\mathbb{T}(0):\mathbb{T}(0)\,\di x
    +\int\limits_{\Omega}\theta(0)\,\di x\,.
\end{align}
Next, after integrating the entropy equation \eqref{pointwise_entropy_eq} over $[0,t]\times\Omega$ , $t\leq\mathfrak{T}$, and using the divergence theorem
we get
\begin{align}
    \label{entropy_equation}
	\int\limits_{\Omega}\ln\theta (t)\,\di x
    = \int\limits_{0}^{t}\int\limits_{\Omega}\frac{\textrm{G}(\theta,\mathbb{T}):\mathbb{T}}{\theta}\, \di x \di \tau
    +\int\limits_{0}^{t}\int\limits_{\Omega}\abs{\nabla\ln\theta}^{2} \,\di x \di \tau
    +\int\limits_{\Omega}\ln\theta (0)\,\di x.
\end{align} 
Subtracting the total energy balance \eqref{energy_equation} and the entropy equation \eqref{entropy_equation} we obtain the equation of total energy dissipation
\begin{align}\label{ineq_total_energy_diss}
	&\int\limits_{\Omega}\big(\theta(t)-\ln \theta(t)\big)\di x
    +\frac{1}{2} \int\limits_{\Omega}\abs{u_{t}(t)}^{2}\di x
    +\frac{1}{2}\int\limits_{\Omega}\mathbb{C}^{-1}\mathbb{T}(t):\mathbb{T}(t)\,\di x\,,\nn\\&
   \hspace{2ex}+ \int\limits_{0}^{t}\int\limits_{\Omega}\frac{\textrm{G}(\theta,\mathbb{T}):\mathbb{T}}{\theta} \,\di x \di \tau
    +\int\limits_{0}^{t}\int\limits_{\Omega}\abs{\nabla\ln\theta}^{2}\, \di x \di \tau
    =\int\limits_{0}^{t}\int\limits_{\Omega}fu_{t}\,\di x \di \tau\nn\\&
    \hspace{2ex}+\int\limits_{\Omega}\big(\theta(0)-\ln \theta(0)\big)\,\di x
    +\frac{1}{2} \int\limits_{\Omega}\abs{u_{t}(0)}^{2}\,\di x
    +\frac{1}{2}\int\limits_{\Omega}\mathbb{C}^{-1}\mathbb{T}(0):\mathbb{T}(0)\,\di x.
\end{align}
The previously mentioned choices of the constitutive function $\textrm{G}$ are constrained by the second law of thermodynamics, which imposes a structural condition known as the dissipation inequality (for more information we refer to \cite{Alber}). This requirement ensures that the inelastic response does not violate the principle of non-negative entropy production. In our setting, this translates into the condition $\displaystyle \textrm{G}(\theta,\mathbb{T}):\mathbb{T}\geq 0$ for all $\theta\in\R_+$ and $\mathbb{T}\in\S^3$, which ensures thermodynamic admissibility of the model. Given our assumption that $\theta>0$ we have  
\begin{align*}
        \int\limits_{0}^{t}\int\limits_{\Omega}\frac{\textrm{G}(\theta,\mathbb{T}):\mathbb{T}}{\theta}\, \di x \di\tau\geq 0\,
\end{align*}
and the entropy equation together with the energy balance yields the following entropy-energy inequality (total dissipation balance)
\begin{align*}
	&\int\limits_{\Omega}\big(\theta(t)-\ln \theta(t)\big)\di x
    +\frac{1}{2} \int\limits_{\Omega}\abs{u_{t}(t)}^{2}\di x
    +\frac{1}{2}\int\limits_{\Omega}\mathbb{C}^{-1}\mathbb{T}(t):\mathbb{T}(t)\,\di x
    \\&\hspace{2ex}+\int\limits_{0}^{t}\int\limits_{\Omega}\abs{\nabla\ln\theta}^{2} \di x \di t
    \,\,\leq\,\,\int\limits_{0}^{t}\int\limits_{\Omega}fu_{t}\,\di x \di t
    +\int\limits_{\Omega}\big(\theta(0)-\ln \theta(0)\big)\,\di x
    \\[1ex]
    &\hspace{3ex}+\frac{1}{2} \int\limits_{\Omega}\abs{u_{t}(0)}^{2}\,\di x
    +\frac{1}{2}\int\limits_{\Omega}\mathbb{C}^{-1}\mathbb{T}(0):\mathbb{T}(0)\,\di x\,,
\end{align*}
which will be a key element in the definition of a weak entropy-energy solution.

\color{black}
\subsection{Main result}
Prior to presenting the notion of a solution for system \eqref{zagadnienie_poczatkowe}, we outline the main assumptions underlying the analysis in this article.

Let us assume that the volume force $f:[0,\mathfrak{T}]\times\Omega\to\mathbb{R}^{3}$ satisfies
	\begin{equation} \label{p_1}
	f\in L^{2}(0,\mathfrak{T};L^{2}(\Omega;\mathbb{R}^{3}))\,.
	\end{equation}
In addition, we consider an isotropic material with a constant fourth-order elasticity tensor $\mathbb{C}:\mathcal{S}^{3}\rightarrow\mathcal{S}^{3}$, which is symmetric and positive-definite. Consequently, the inverse operator $\mathbb{C}^{-1}$ exists and is well-defined.

	
Furthermore, we assume that the constitutive function $\textrm{G}$ is continuous with respect to both arguments and satisfies the condition $\textrm{G}(\theta,0)=0$ for all $\theta\in \mathbb{R}$. Additionally, motivated by thermodynamic consistency, we require that $\textrm{G}$ is monotone in its second argument; that is, for all $\eta_{1},\eta_{2}\in \mathcal{S}^{3}$ and all $\theta\in\mathbb{R}$, the following inequality holds
	\begin{equation}\label{monoton_G}
	\big( \textrm{G}(\theta,\eta_{1})-\textrm{G}(\theta,\eta_{2})\big):\big(\eta_{1}-\eta_{2} \big)\geq 0\,.
	\end{equation}
Moreover, we assume that $\textrm{G}$ satisfies a polynomial growth condition. Specifically, there exists a constant $C_{\textrm{G}}>0$ such that for all $\theta\in\mathbb{R}$ and $\eta\in \mathcal{S}^{3}$ the following inequality is satisfied
	\begin{equation}\label{bound_G}
	\abs{ \textrm{G}(\theta,\eta)}\leq C_{\textrm{G}}(1+\abs{\eta})\,.
	\end{equation}
It is worth noting that the growth condition \eqref{bound_G} is analogous to the constitutive relation appearing in the classical Mr\'oz model (see \cite{MROZ67,OwczarekWielgos23,PhDFilip}).    
	
In the context of the problem under consideration, we impose the following initial conditions for displacement, velocity, stress, and temperature:
	\begin{align}\label{p_6}
	u_{0}\in H_{0}^{1}(\Omega;\mathbb{R}^{3})&, \quad u_{1}\in L^{2}(\Omega;\mathbb{R}^{3})\,,\nonumber\\[1ex]
	\mathbb{T}_{0}\in L^{2}(\Omega;\mathcal{S}^{3})&, \quad \theta_{0}\in  L^{1}(\Omega;\mathbb{R}_+)\,,\quad\ln\theta_0\in L^{1}(\Omega;\mathbb{R})\,.
	\end{align}
For convenience, in the subsequent sections, we introduce the notation  $\tau:=\ln\theta$.

With all the assumptions in place, we are now ready to formulate the definition of a weak entropy-energy solution for the considered problem and to state the main result of this article.

\begin{definition}\label{definicja_rozwiazania}
We say that a vector $\big( u,\mathbb{T},\tau \big)$ is a weak entropy-energy solution with defect measure of the system \eqref{zagadnienie_poczatkowe} with the boundary and initial conditions \eqref{warunki_brzegowe} and \eqref{warunki_poczatkowe} if:
\begin{enumerate}
    \item [\textbf{1.}] the external forces and the initial conditions have the regularity specified in \eqref{p_1} and \eqref{p_6}. 
    \item[\textbf{2.}] it has the following regularities:
    \begin{align*}
		&u\in W^{1,\infty}(0,\mathfrak{T};L^{2}(\Omega;\mathbb{R}^{3})) \cap  L^{\infty}(0,\mathfrak{T};H^{1}_0(\Omega;\mathbb{R}))\,,\\[1ex]
        &
		\mathbb{T} \in L^{\infty}(0,\mathfrak{T};L^{2}(\Omega;\mathcal{S}^{3}))\,, \quad \varepsilon(u_{t})-\mathbb{C}^{-1}\mathbb{T}_{t} \in L^{2}(0,\mathfrak{T};L^{2}(\Omega;\mathcal{S}^{3}))\,,\\[1ex]
		&\tau\in L^{\infty}(0,\mathfrak{T};L^{1}(\Omega;\mathbb{R})) \cap L^{2}(0,\mathfrak{T};H^{1}(\Omega;\mathbb{R}))\,,\qquad
        e^{\tau}\in L^{\infty}(0,\mathfrak{T};L^{1}(\Omega;\mathbb{R}))\,.
        \end{align*}
       \item[\textbf{3.}] the equations $\eqref{zagadnienie_poczatkowe}_1$ and $\eqref{zagadnienie_poczatkowe}_2$ are satisfied in the following form:
       \begin{align}
		\label{maindef1}
		-\int\limits_{0}^{\mathfrak{T}}\int\limits_{\Omega} u_{t}\varphi_{t}\,\di x \di t
		+& \int\limits_{0}^{\mathfrak{T}}\int\limits_{\Omega}\mathbb{T}:\varepsilon(\varphi)\,\di x\di t
		-\int\limits_{0}^{\mathfrak{T}}
        \langle \theta,\diver{ }\varphi \rangle_{[\mathcal{M}^{+};C](\overline{\Omega})}\,\di t\nonumber\\&
		= \int\limits_{0}^{\mathfrak{T}}\int\limits_{\Omega} f\varphi \,\di x\di t
        +\int\limits_{\Omega}u_{1}\varphi(0)\, \di x, 
		\end{align}
		for every function $\varphi\in C_{0}^{\infty}([0,\mathfrak{T})\times\Omega;\R^3)$, where $\theta\in L^{\infty}(0,\mathfrak{T};\mathcal{M}^{+}(\overline{\Omega}))$ satisfies
        \begin{align*}
        \di \theta=e^{\tau}\di x+g           
        \end{align*}
        and $g\geq 0$ is a singular part of $\theta$ supported on set of measure zero, and
        \begin{align}
		\label{maindef2}
	\int\limits_{0}^{\mathfrak{T}}\int\limits_{\Omega}\big(\varepsilon(u)-\mathbb{C}^{-1}\mathbb{T}\big)_t:\psi \,\di x\di t=
		\int\limits_{0}^{\mathfrak{T}}\int\limits_{\Omega}\gamma:\psi\, \di x\di t
		\end{align}
		for all $\psi\in C_{0}^{\infty}((0,\mathfrak{T})\times\Omega;\S^3)$, where $\gamma\in L^2((0,\mathfrak{T})\times\Omega;\S^3)$, represented as
        \begin{align*}
            \gamma=\int\limits_{\mathbb{R_+}\times\mathbb{R}^{6}}\textrm{G}(\lambda_{1},\lambda_{2})\di \nu_{(t,x)}(\lambda_{1},\lambda_{2})
            =\int\limits_{\mathbb{R}^{6}}\textrm{G}(e^{\tau},\lambda_{2})\di \mu_{(t,x)}(\lambda_{2})
        \end{align*}
        and $\nu_{(t,x)}:(0,\mathfrak{T})\times\Omega\to\mathbb{R_+}\times\mathbb{R}^{6}$ is a Young measure generated by a sequence $\{(\theta_{n},\mathbb{T}_{n}) \}_{n=1}^{\infty}$ (defined in Section 2), $\mu_{(t,x)}:(0,\mathfrak{T})\times\Omega\to\mathbb{R}^{6}$ is a Young measure generated by a sequence $\{\mathbb{T}_{n} \}_{n=1}^{\infty}$ where
        \begin{align*}
        \nu_{(t,x)}=\delta_{e^{\tau}(t,x)}\otimes \mu_{(t,x)}\,.
		\end{align*}
      \item[\textbf{4.}] the entropy equation 
      \begin{align}
  \label{entropy_weak_form_0}
		-\int\limits_{0}^{\mathfrak{T}}\int\limits_{\Omega}(\tau+&\diver{u})\phi_{t}\,\di x\di t
		+\int\limits_{0}^{\mathfrak{T}}\int\limits_{\Omega}\nabla\tau \nabla \phi\,\di x\di t+\int\limits_{\Omega} (\tau_{0}+\diver{u_{0}})\phi(0,x)\,\di x		
        \nonumber\\&
		=\langle \tilde{\sigma},\phi \rangle_{[\mathcal{M}^{+};C]([0,\mathfrak{T}]\times\overline{\Omega})}+\langle \sigma,\phi \rangle_{[\mathcal{M}^{+};C]([0,\mathfrak{T}]\times\overline{\Omega})}
\end{align}
 is satisfied for all  $\phi\in C_{0}^{\infty}([0,\mathfrak{T})\times\Omega;\R)$, where $\sigma\in \mathcal{M}^{+}([0,\mathfrak{T}]\times\overline{\Omega};\R)$ satisfies
 \begin{align*}
\sigma\geq\abs{\nabla\tau}^{2}     
 \end{align*}
 and $\tilde{\sigma}\in \mathcal{M}^{+}([0,\mathfrak{T}]\times\overline{\Omega};\R)$.
     \item[\textbf{5.}] the following total energy dissipation inequality holds for a.e. $t\in[0,\mathfrak{T}]$
      \begin{align}\label{ineq_tot_en_diss}
    	&\int\limits_{\Omega}\di \theta(t)
        -\int\limits_{\Omega}\tau(t) \,\di x
        +\frac{1}{2} \int\limits_{\Omega}\abs{u_{t}(t)}^{2}\,\di x
        +\frac{1}{2}\int\limits_{\Omega}\mathbb{C}^{-1}\mathbb{T}(t):\mathbb{T}(t)\,\di x
        +\int\limits_{0}^{t}\int\limits_{\Omega}\abs{\nabla\tau}^{2} \di x \di t
        \nonumber\\&
        \hspace{2ex}\leq\int\limits_{0}^{t}\int\limits_{\Omega}fu_{t}\,\di x \di t
        +\int\limits_{\Omega}\big(e^{\tau_{0}}-\tau_{0}\big)\,\di x
        +\frac{1}{2} \int\limits_{\Omega}\abs{u_{1}}^{2}\,\di x
        +\frac{1}{2}\int\limits_{\Omega}\mathbb{C}^{-1}\mathbb{T}_{0}:\mathbb{T}_{0}\,\di x\,.
    	\end{align}
\end{enumerate}
\end{definition}
The concept of a weak solution with a defect measure presented in Definition \ref{definicja_rozwiazania} is inspired both by the work of Feireisl and Novotny \cite{FeireislNovotny2009}, the recent study \cite{CieslakMuha_3D}, and by the concept of energetic solutions introduced by Suquet \cite{Suquet1981} and Che\l{}mi\'nski et al \cite{Chelminski2002,ChelminskiNeffOwczarek2014}. While the former works focus on purely elastic or fluid systems, the energetic solution framework provides a natural approach to systems with inelastic effects. The idea of weak solutions with defect measures originates from earlier works by DiPerna and Lions \cite{DiPernaLions1988} and Alexandre and Villani \cite{AlexandreVillani2002}, initially developed in the context of the Boltzmann equation.

The use of weak thermodynamical laws in the form of inequalities ensures consistency with the Clausius-Duhem inequality and the energy balance \cite{FeireislNovotny2009,Serrin1996}. Recent results in nonlinear thermo-elasticity show that such concepts allow proving global existence for large initial data, even when material parameters depend on temperature, as in \cite{ClaesLankeitWinkler2025}.\\

We are now in a position to state the main result of this article.
\begin{theorem}\label{TWIERDZENIE}(Main result)\\
		Let the elasticity tensor $\mathbb{C}:\mathcal{S}^3 \to \mathcal{S}^3$ be symmetric and positive definite. Additionally, let us assume that given data $u_{0}$, $u_{1}$, $\mathbb{T}_{0}$, $\tau_{0}$, $e^{\tau_{0}}$ and $f$ have the regularities specified in conditions \eqref{p_1} and \eqref{p_6} and the function $\mathrm{G}$ satisfies \eqref{monoton_G}-\eqref{bound_G}. Then, there exists global in time weak entropy-energy solution in the sense of Definition \ref{definicja_rozwiazania} to the system \eqref{zagadnienie_poczatkowe}.
\end{theorem}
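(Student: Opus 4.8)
### Proof Strategy

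The plan is to construct the solution via a multi-level approximation scheme, derive uniform a priori estimates (entropy and energy), and then pass to the limit using weak and Young-measure convergence, the latter being unavoidable because of the lack of compactness in the nonlinear dissipation term $\mathrm{G}(\theta,\mathbb{T}):\mathbb{T}$. First I would regularize the problem. A natural choice is a Galerkin discretization for the displacement equation combined with a truncation of the heat source: replace $\mathrm{G}(\theta,\mathbb{T}):\mathbb{T}$ by $\frac{\mathrm{G}(\theta,\mathbb{T}):\mathbb{T}}{1+\frac1k\,\mathrm{G}(\theta,\mathbb{T}):\mathbb{T}}$ and add, if needed, a small viscous regularization $-\delta\,\mathrm{div}\,\varepsilon(u_t)$ in $\eqref{zagadnienie_poczatkowe}_1$ and an $\epsilon$-regularization of $\mathbb{C}^{-1}\mathbb{T}_t$ to get enough compactness at the finite level; also shift the initial temperature $\theta_0$ upward by $\frac1n$ so that strict positivity holds. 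Existence at each approximation level follows from standard ODE theory (for the Galerkin system) plus linear parabolic theory for the heat equation and the monotone structure \eqref{monoton_G} of $\mathrm{G}$ in $\mathbb{T}$, which gives existence and uniqueness for the stress equation by monotone operator methods.

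Next I would establish the uniform estimates. The energy identity \eqref{energy_equation} — obtained by testing $\eqref{zagadnienie_poczatkowe}_1$ with $u_t$, $\eqref{zagadnienie_poczatkowe}_2$ with $\mathbb{T}$ and $\eqref{zagadnienie_poczatkowe}_3$ with $1$ — together with $\mathrm{G}(\theta,\mathbb{T}):\mathbb{T}\ge 0$, Gronwall, and \eqref{p_1}, \eqref{p_6} gives uniform bounds on $u_t$ in $L^\infty(0,\mathfrak{T};L^2)$, on $\nabla u$ in $L^\infty(0,\mathfrak{T};L^2)$, on $\mathbb{T}$ in $L^\infty(0,\mathfrak{T};L^2)$, and on $\theta$ in $L^\infty(0,\mathfrak{T};L^1)$ (positivity makes the $L^1$ norm coincide with the integral). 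The entropy equation \eqref{pointwise_entropy_eq}, tested with $1$, and positivity of $\frac{\mathrm{G}:\mathbb{T}}{\theta}$, yields a uniform lower bound on $\int_\Omega \ln\theta$ and — crucially — a uniform bound on $\nabla\ln\theta$ in $L^2((0,\mathfrak{T})\times\Omega)$; combined with the $L^1$ bound on $\theta$ and a Poincaré–Sobolev argument for $\ln\theta$ this gives $\tau=\ln\theta$ bounded in $L^2(0,\mathfrak{T};H^1)$ and in $L^\infty(0,\mathfrak{T};L^1)$. From \eqref{bound_G}, $\mathrm{G}(\theta_n,\mathbb{T}_n)$ is bounded in $L^\infty(0,\mathfrak{T};L^2)$, hence so is $\varepsilon(u_{n,t})-\mathbb{C}^{-1}\mathbb{T}_{n,t}$. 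Finally, comparison in $\eqref{zagadnienie_poczatkowe}_1$ gives a bound on $u_{n,tt}$ in a negative-order space, which via Aubin–Lions upgrades the weak-$*$ convergence of $u_{n,t}$ to strong convergence in $L^2((0,\mathfrak{T})\times\Omega)$, so that $fu_{n,t}\to fu_t$ and the energy-dissipation inequality \eqref{ineq_tot_en_diss} survives the limit (by weak lower semicontinuity of the convex quadratic and $H^1$-seminorm terms, and Fatou for $\int\ln\theta$).

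Then comes the limit passage and the identification of the defect measures. Extract weakly-$*$ convergent subsequences: $u_n\rightharpoonup u$, $\mathbb{T}_n\rightharpoonup\mathbb{T}$, $\tau_n\rightharpoonup\tau$, $e^{\tau_n}\,dx\rightharpoonup\theta$ in $L^\infty(0,\mathfrak{T};\mathcal{M}^+(\overline\Omega))$ — the weak-$*$ limit of the $L^1$-bounded positive densities is a measure whose absolutely continuous part is $e^\tau$ (by Fatou/weak lower semicontinuity applied to the convex function $e^{(\cdot)}$ and a.e. convergence of $\tau_n$, which follows from the $H^1$ bound plus Aubin–Lions for $\tau_n$ once a time-derivative bound for $\tau_n$ or for $\tau_n+\mathrm{div}\,u_n$ is extracted from the entropy equation), leaving a nonnegative singular part $g$. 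The sequence $\{(\theta_n,\mathbb{T}_n)\}$ generates a Young measure $\nu_{(t,x)}$; since $\tau_n\to\tau$ a.e., $\theta_n=e^{\tau_n}\to e^\tau$ a.e., so the $\theta$-marginal is the Dirac mass $\delta_{e^\tau(t,x)}$ and $\nu_{(t,x)}=\delta_{e^\tau(t,x)}\otimes\mu_{(t,x)}$ with $\mu$ the Young measure of $\{\mathbb{T}_n\}$. The weak limit $\gamma$ of $\mathrm{G}(\theta_n,\mathbb{T}_n)$ is then represented as $\int \mathrm{G}(e^\tau,\lambda_2)\,d\mu_{(t,x)}(\lambda_2)$ by the fundamental theorem on Young measures applied to the continuous integrand $\mathrm{G}$ (uniformly integrable by \eqref{bound_G} and the $L^2$-bound on $\mathbb{T}_n$). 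Passing to the limit in the weak forms \eqref{maindef1}, \eqref{maindef2} is then routine (linear terms converge weakly, the temperature term becomes the duality pairing with the measure $\theta$). For the entropy equation, the bounded measures $\sigma_n:=|\nabla\tau_n|^2\,dx$ and the truncated source measures $\tilde\sigma_n$ converge weakly-$*$ to nonnegative measures $\sigma,\tilde\sigma$ on $[0,\mathfrak{T}]\times\overline\Omega$; weak lower semicontinuity of the $L^2$-norm gives $\sigma\ge|\nabla\tau|^2$ as required, and passing to the limit in the weak form of \eqref{pointwise_entropy_eq} gives \eqref{entropy_weak_form_0}.

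The main obstacle is exactly the point where no a priori estimate helps: controlling the heat source $\mathrm{G}(\theta,\mathbb{T}):\mathbb{T}$, which is only known to be in $L^\infty(0,\mathfrak{T};L^1)$ and is not expected to be equiintegrable, so its weak limit develops a concentration defect measure $\tilde\sigma$ — this is why the notion of solution carries a defect measure in the entropy equation rather than an equality with an $L^1$ source, and why the internal-energy equation $\eqref{zagadnienie_poczatkowe}_3$ is replaced by the entropy formulation and the energy-dissipation inequality \eqref{ineq_tot_en_diss}. A secondary difficulty is obtaining the a.e. convergence of $\tau_n$ (needed to kill the $\theta$-marginal of the Young measure): this requires a compactness argument for $\tau_n$ in $L^1$, for which one extracts a uniform bound on $\partial_t(\tau_n+\mathrm{div}\,u_n)$ in a negative Sobolev space from the entropy equation (the right-hand side $\frac{\mathrm{G}:\mathbb{T}}{\theta}+|\nabla\tau|^2$ is bounded in $L^1$, hence in $W^{-s,1}$ for suitable $s$), combines it with the bound on $\mathrm{div}\,u_{n,t}=\mathrm{tr}\,\varepsilon(u_{n,t})$ coming from the strong convergence of $u_{n,t}$, and then applies a Aubin–Lions–Simon compactness lemma; care is needed since $\mathrm{G}:\mathbb{T}/\theta$ is merely $L^1$, so one works with the $L^2(H^1)$ bound on $\tau_n$ and the weak compactness of measures rather than with strong $H^1$-compactness.
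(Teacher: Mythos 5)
Your proposal captures the correct high-level architecture (Galerkin plus truncation, energy/entropy estimates, positivity of temperature, Young measures for $\mathrm{G}(\theta,\mathbb{T})$, defect measures for $|\nabla\tau|^2$ and for $e^{-\tau}\mathrm{G}:\mathbb{T}$, weak lower semicontinuity for the dissipation inequality), and you correctly locate the reason the $L^1$ heat source forces a defect-measure formulation. However, there is a genuine gap at a point you treat as routine: you assert that the energy identity \eqref{energy_equation} yields $\nabla u\in L^\infty(0,\mathfrak{T};L^2)$. It does not. The energy controls $\mathbb{T}$ in $L^\infty(L^2)$, not $\varepsilon(u)$, because in the inelastic case $\mathbb{T}$ and $\varepsilon(u)$ are not linked algebraically -- the constitutive relation $\mathbb{C}^{-1}\mathbb{T}_t+\mathrm{G}(\theta,\mathbb{T})=\varepsilon(u_t)$ involves time derivatives and the nonlinear term $\mathrm{G}$. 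So the regularity $u\in L^\infty(0,\mathfrak{T};H_0^1)$, which is required by Definition \ref{definicja_rozwiazania} and is used in the entropy equation \eqref{entropy_weak_form_0} through the term $\diver u$, is left unproven in your scheme.

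The paper's mechanism for closing this gap is precisely its main structural novelty, and it is absent from your proposal: a two-level semi-Galerkin scheme in which the displacement lives at level $n$ (in $\mathcal{W}^n$) and the stress at a separate level $k$ (in $\mathcal{V}^k$), with the temperature not discretized. After passing $k\to\infty$ at fixed $n$ (using Minty's trick and the monotonicity \eqref{monoton_G} to identify $\mathrm{G}(\theta_n,\mathbb{T}_n)$), the constitutive equation \eqref{approx_n_2} holds pointwise a.e.; integrating it in time gives
\begin{equation*}
\varepsilon(u_n(t))=\mathbb{C}^{-1}\mathbb{T}_n(t)+\varepsilon(u_n(0))-\mathbb{C}^{-1}\mathbb{T}_n(0)+\int_0^t \mathrm{G}(\theta_n,\mathbb{T}_n)\,\di\tau\,,
\end{equation*}
and every term on the right is bounded in $L^\infty(0,\mathfrak{T};L^2)$ uniformly in $n$ (by the energy estimate and the growth condition \eqref{bound_G}). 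Your alternative of adding a vanishing Kelvin--Voigt regularization $-\delta\,\diver\varepsilon(u_t)$ does not repair this: the associated energy term is $\delta\int|\varepsilon(u_t)|^2$, which degenerates as $\delta\to 0$ and gives no uniform control on $\varepsilon(u)$. (The paper also explicitly positions the absence of Kelvin--Voigt terms as part of its contribution.) A smaller discrepancy: existence of the approximate solutions in the paper requires Sch\"afer's fixed-point theorem to handle the coupling of the finite-dimensional displacement/stress system with the infinite-dimensional heat equation, rather than straightforward ODE theory plus monotone operators, though that is a technicality compared with the missing $\varepsilon(u)$ bound.
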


To prove the main theorem of this article, a two-level semi-Galerkin approximation is employed. It is worth noting that the different levels correspond to the displacement and the stress tensor (see \eqref{reg_apprx}), which represents a completely new approach. In contrast, in typical thermo-visco-elastic systems using a two-level Galerkin method, the first approximation step is applied to the temperature, while the second step involves both the displacement and the stress tensor (see, for example, \cite{OwczarekWielgos23, ChelminskiOwczarekWielgos25, KlaweOwczarek, RoubicekL1}). On the other hand, in thermo-elastic systems where a semi-Galerkin approach is applied, only a single approximation level for the displacement is introduced and incorporated into the energy balance equation (cf. \cite{CieslakMuha_3D}).\\

Moreover, in contrast to \cite{CieslakMuha_3D}, where the entropy equation is also analyzed, we do not transfer the spatial derivative in the $\diver u$ term onto the test function in \eqref{entropy_equation}. This formulation refers directly to the pointwise entropy equation \eqref{pointwise_entropy_eq}. The final limit passage in the term $\diver u_{n}$ is carried out in Section 4.4. This is possible because, after the first limit passage, the approximate inelastic constitutive equation is satisfied pointwise, which ensures the necessary regularity and boundedness of the sequence $\{ \varepsilon(u_{n})\}_{n=1}^{\infty}$. We remark that, in the purely elastic case, such information about $\{ \varepsilon(u_{n})\}_{n=1}^{\infty}$ can be obtained immediately from the total energy of the system; here, however, achieving this result requires a careful separation of the approximation steps.\\

In the entropy-energy formulation of problem \eqref{zagadnienie_poczatkowe}, measures naturally arise in the entropy equation \eqref{entropy_weak_form_0}, and only the total energy dissipation inequality \eqref{ineq_tot_en_diss} is satisfied. The two-level semi-Galerkin approximation introduced in Section 2, under the general assumptions of Section 1.3, does not immediately provide strong control over the nonlinear term on the right-hand side of the energy balance equation \eqref{zagadnienie_poczatkowe}$_3$. To overcome this, we introduce a measure representing the nonlinear term in the entropy equation \eqref{entropy_weak_form_0} and analyze the system in terms of the total energy dissipation inequality rather than the energy equality. This approach allows us to establish the existence of solutions under minimal assumptions on the model. Notably, neither additional forces in the momentum equation (cf. \cite{OwczarekWielgos23}) nor the Kelvin-Voigt material model (see, e.g., \cite{ChelminskiOwczarekWielgos25, RoubicekL1}) are required, making the analysis applicable to a general class of thermo-visco-elastic systems without relying on any mathematically favorable structural properties.\\[2ex]
The article is organized as follows. Section 2 presents a two-level (levels $n$ and $k$) semi-Galerkin approximation employed to prove the main theorem and establishes the existence of the corresponding approximate solutions. This section concludes with a proof of the positivity of the temperature. In Section 3, all estimates required for convergence with respect to level $k$, related to the stress tensor, are collected. The section ends with the first limit passage, resulting in an approximate system that depends only on level $n$, while still satisfying the energy equation. Finally, Section 4 reformulates the energy equation as an entropy equation, provides the necessary estimates at level $n$, performs the second limit passage, and establishes the total energy dissipation inequality.

\section{Approximation of the problem}
In this chapter, we introduce the two-level semi-Galerkin approximation of the problem \eqref{zagadnienie_poczatkowe}.

For the displacement vector, we employ the finite-dimensional subspaces\\ $\mathcal{W}^{n}=\spa{\{ \varphi_{1},\ldots,\varphi_{n} \}}$, $\mathcal{W}^{n}\subset \mathcal{W}^{n+1}$ , where the elements $\{ \varphi_{i} \}_{i=1}^{\infty}\subset C_{0}^{\infty}(\Omega;\mathbb{R}^{3})$ are orthogonal in the space $H_{0}^{1}(\Omega;\mathbb{R}^{3})$. Then, 
$$H_{0}^{1}(\Omega;\mathbb{R}^{3})= \overline{\Big( \bigcup\limits_{n=1}^{\infty}\mathcal{W}^{n} \Big)}^{\norm{\cdot}_{H^{1}(\Omega;\mathbb{R}^{3})}}\,.$$    
For the stress tensor, we consider an orthonormal basis $\{ \psi_{i} \}_{i=1}^{\infty}\subset L^{2}(\Omega;\mathcal{S}^{3})$ and define the corresponding approximation space $\mathcal{V}^{k}:=\spa{\{ \psi_{1},\ldots,\psi_{k} \}}$. For the temperature, we use test functions $\phi\in H^{1}(\Omega;\mathbb{R})$.
   
Finally, following \cite{Blanchard}, for any positive number $n$ we define the truncation operator $\textrm{T}_{n}$ at the height $n>0$, i.e. $\textrm{T}_{n}(r)=\min\{ n, \max\{  r, -n \}\}$, $r\in\mathbb{R}$, which will be used to control the nonlinear terms at each step of the approximation procedure.

Based on the finite-dimensional subspaces introduced above, we approximate the given data $f$ as well as the initial conditions in the following way: 
\begin{align}\label{Property_f_n}
    \lim\limits_{n\to\infty}f_{n}=f \text{ in } L^{2}((0,\mathfrak{T})\times\Omega;\mathbb{R}^{3})\,,
    \end{align}
\begin{equation}\label{init_cond_nk_1}
	u_{nk}(0)=u_{n}(0)=u_{0n}=P_{\mathcal{W}^{n}}u_{0}\,,
	\end{equation}
	\begin{equation}\label{init_cond_nk_2}
	u_{nk,t}(0)=u_{n,t}(0)=u_{1n}=P_{\mathcal{W}^{n}}u_{1}\,,
	\end{equation}
    \begin{equation}\label{init_cond_nk_3}
	\mathbb{T}_{nk}(0)=\mathbb{T}_{k}(0)=\mathbb{T}_{0k}=P_{\mathcal{V}^{k}}\mathbb{T}_{0}\,,
	\end{equation}
where $P_{\mathcal{W}^{n}}$ and $P_{\mathcal{V}^{n}}$ denote the orthogonal projections onto $\mathcal{W}^{n}$ and $\mathcal{V}^{k}$, respectively. For the temperature, we require
	\begin{align}\label{init_cond_nk_4}
	\theta_{nk}(0)=\theta_{n}(0)&=\theta_{0n}>0\,, \quad \theta_{0n}\in H^{1}(\Omega;\mathbb{R})\,, \nonumber\\[1ex]
   \lim\limits_{n\to\infty}\theta_{0n}&=\theta_{0} \text{ in } L^{1}(\Omega;\mathbb{R})\,.
	\end{align}
For every $n,k\in\mathbb{N}$ we consider the approximate solutions to the system  \eqref{zagadnienie_poczatkowe}, i.e. triple 
\begin{equation}
\label{reg_apprx}
\big(u_{nk}, \mathbb{T}_{nk}, \theta_{nk}  \big)\in W^{1,\infty}(0,\mathfrak{T};\mathcal{W}^{n})\times L^{\infty}(0,\mathfrak{T};\mathcal{V}^{k})\times L^{2}(0,\mathfrak{T};H^{1}(\Omega;\mathbb{R}))\,,
\end{equation}
which satisfies the following equations for a.e. $t\in[0,\mathfrak{T}]$
	
	\begin{align}\label{approx_nk_1}
	\int\limits_{\Omega} u_{nk,tt}\varphi\,\di x 
	+ \int\limits_{\Omega}\mathbb{T}_{nk}:\varepsilon(\varphi)\,\di x
	-&\int\limits_{\Omega}\theta_{nk}\diver{ } \varphi\,\di x
	= \int\limits_{\Omega} f_{n}\varphi \,\di x
	\end{align}
	for all $\varphi\in \mathcal{W}^{n}$ and
	\begin{align}\label{approx_nk_2}
	\int\limits_{\Omega} \mathbb{C}^{-1}\mathbb{T}_{nk,t}: \psi\,\di x
	+\int\limits_{\Omega}\textrm{G}(\theta_{nk},\mathbb{T}_{nk}):\psi \,\di x
	=\int\limits_{\Omega}\varepsilon(u_{nk,t}): \psi\,\di x
	\end{align}
	for all $\psi\in \mathcal{V}^{k}$ and 
    \begin{align}\label{approx_nk_3}
	\int\limits_{\Omega}\theta_{nk,t}\phi\,\di x
	+\int\limits_{\Omega}\nabla\theta_{nk} \nabla \,\phi\di x
	+\int\limits_{\Omega}\theta_{nk}\diver{ } u_{nk,t}\phi \,\di x
	=\int\limits_{\Omega}\textrm{T}_{n}\big( \textrm{G}(\theta_{nk},\mathbb{T}_{nk}):\mathbb{T}_{nk}\big)\phi\,\di x
	\end{align}
	for all $\phi\in H^{1}(\Omega;\mathbb{R})$.\\
\subsection{Existence of approximate solutions}
In this subsection we establish the existence of a solution to the approximate system \eqref{approx_nk_1}--\eqref{approx_nk_3} at each step of the approximation procedure.  The construction is inspired by the approach presented in \cite{CieslakMuha_3D}. The main tool applied here is Sch\"afer's fixed point theorem. In order to apply Sch\"afer's fixed point theorem, we first introduce a suitable operator acting on the finite-dimensional approximation spaces. This requires fixing the parameters $n,k\in\mathbb{N}$ and constructing the space $X_{nk}$ together with the associated mappings.\\

Let us fix $n,k\in\mathbb{N}$ and set $X_{nk}=C^{1}([0,\mathfrak{T}];\mathcal{W}^{n})\times C([0,\mathfrak{T}];\mathcal{V}^{k})$ with a norm 
\begin{align*}
        \norm{(u,\mathbb{T})}_{X_{nk}}=\max\limits_{t\in[0,\mathfrak{T}]}\{\norm{u(t)}_{L^{2}(\Omega)}, \norm{u_{t}(t)}_{L^{2}(\Omega)}, \norm{\mathbb{T}(t)}_{L^{2}(\Omega)}\}.
    \end{align*}
    We set $(u_{nk},\mathbb{T}_{nk})\in X_{nk}$ and define $\theta_{nk}(u_{nk},\mathbb{T}_{nk}):=\theta_{nk}$ as a solution to the equation
    \begin{align}\label{theta_nk-def_sol}
    \int\limits_{\Omega}\theta_{nk,t}\phi\,\di x
	+\int\limits_{\Omega}\nabla\theta_{nk} \nabla \phi\,\di x
	+\int\limits_{\Omega}\theta_{nk}\diver{ } u_{nk,t}\phi \,\di x
	=\int\limits_{\Omega}\textrm{T}_{n}\big( \textrm{G}(\theta_{nk},\mathbb{T}_{nk}):\mathbb{T}_{nk}\big)\phi\,\di x
\end{align}
for all $\phi\in H^{1}(\Omega;\mathbb{R})$. The standard parabolic theory yields that there exists a weak solution $\theta_{nk}\in L^{\infty}(0,\mathfrak{T};H^{1}(\Omega;\mathbb{R}))$ and $\theta_{nk,t}\in L^{2}(0,\mathfrak{T};L^{2}(\Omega;\mathbb{R}))$ to \eqref{theta_nk-def_sol} such that 
\begin{equation}
\label{theta_nk_bou}
\norm{\theta_{nk}}_{L^{2}(0,\mathfrak{T};H^{1}(\Omega))}\leq C_{nk}\norm{(u_{nk},\mathbb{T}_{nk})}_{X_{nk}}\,.
\end{equation}

\textbf{Definition of operator $\mathcal{A}$:} given a solution $\theta_{nk}$ to \eqref{theta_nk-def_sol}, we define the operator 
\[
\mathcal{A}:X_{nk}\to X_{nk}\,, \qquad \mathcal{A}(u_{nk},\mathbb{T}_{nk})=(\tilde{u}_{nk},\tilde{\mathbb{T}}_{nk})\,,
\]
as the mapping that assigns to each pair $(u_{nk},\mathbb{T}_{nk})$ the unique solution $(\tilde{u}_{nk},\tilde{\mathbb{T}}_{nk})$ of the following system of differential equations:
\begin{align}\label{approx_sys_u}
    \int\limits_{\Omega} \tilde{u}_{nk,tt}\varphi\,\di x 
	+ \int\limits_{\Omega}\tilde{\mathbb{T}}_{nk}:\varepsilon(\varphi)\,\di x
	-&\int\limits_{\Omega}\theta_{nk}\diver{ } \varphi\,\di x
	= \int\limits_{\Omega} f_{n}\varphi \,\di x
\end{align}
    for all $\varphi\in\mathcal{W}^{n}$ and
\begin{align}\label{approx_sys_T}
    \int\limits_{\Omega} \mathbb{C}^{-1}\tilde{\mathbb{T}}_{nk,t}: \psi\,\di x
	+\int\limits_{\Omega}\textrm{G}(\theta_{nk},\tilde{\mathbb{T}}_{nk}):\psi\, \di x
	=\int\limits_{\Omega}\varepsilon(\tilde{u}_{nk,t}) : \psi\,\di x
\end{align}
    for all $\varphi\in\mathcal{V}^{k}$, where $\theta_{nk}$ is a solution of \eqref{theta_nk-def_sol}. We introduce the new variable $\tilde{u}_{nk,t}:=v_{nk}$ and rewrite the system  \eqref{approx_sys_u}--\eqref{approx_sys_T} as a first-order system of ordinary differential equations
\begin{align}\label{approx_sys_u_1}
\int\limits_{\Omega} v_{nk,t}\varphi\,\di x 
	=-\int\limits_{\Omega}\tilde{\mathbb{T}}_{nk}:\varepsilon(\varphi)\,\di x
	+&\int\limits_{\Omega}\theta_{nk}\diver{ } \varphi\,\di x
	+ \int\limits_{\Omega} f_{n}\varphi\, \di x\,,
\end{align}
for all $\varphi\in\mathcal{W}^{n}$ and
\begin{align}\label{approx_sys_T_1}
\int\limits_{\Omega} \mathbb{C}^{-1}\tilde{\mathbb{T}}_{nk,t}: \psi\di x
	=-\int\limits_{\Omega}\textrm{G}(\theta_{nk},\tilde{\mathbb{T}}_{nk}):\psi \,\di x
	+\int\limits_{\Omega}\varepsilon(v_{nk}) :\psi\,\di x\,.
\end{align}
By Carath\'eodory's theorem, the system \eqref{approx_sys_u_1}--\eqref{approx_sys_T_1} admits a solution.  In particular, for almost every $t\in(0,\mathfrak{T})$, the functions $v_{nk}$ and $\tilde{\mathbb{T}}_{nk}$ are absolutely continuous in time, hence their time derivatives exist almost everywhere in $(0,\mathfrak{T})$. Moreover, the energy estimates derived in Section 3 ensure that these derivatives belong to $L^{2}(0,\mathfrak{T})$. Consequently, we may define the operator 
\[
\mathcal{A}(u_{nk},\mathbb{T}_{nk}) = (\tilde{u}_{nk},\tilde{\mathbb{T}}_{nk})\,,
\]
which is determined as the solution to the system \eqref{approx_sys_u}--\eqref{approx_sys_T}.\\

\textbf{Continuity of $\mathcal{A}$:} let the sequence $\{ (u_{m},\mathbb{T}_{m} )\}_{m=1}^{\infty}$ be convergent in $X_{nk}$, i.e. 
\begin{equation}
    \label{convergence_nk}
(u_{m},\mathbb{T}_{m})\underset{m\to\infty}{\to}(u,\mathbb{T})\quad \textrm{ in } \,\,X_{nk}\,.
\end{equation}
Let us write a system for $\theta_{m}$, which is associated with the sequence $\{ (u_{m},\mathbb{T}_{m} )\}_{m=1}^{\infty}$
\begin{align}\label{theta_m_A_cont}
      \theta_{m,t}-\Delta\theta_{m}=\textrm{T}_{n}(\textrm{G}(\theta_{m},\mathbb{T}_{m}):\mathbb{T}_{m})-\diver{ }u_{m,t}\theta_{m}\,,\\
        \frac{\partial\theta_{m}}{\partial\nu}=0, \quad\quad \theta_{m}(0)=\theta_{n0}\,.
\end{align}
Convergence \eqref{convergence_nk} implies that the sequences $\diver u_{m,t}$ and $\textrm{T}_{n}(\textrm{G}(\theta_{m},\mathbb{T}_{m}):\mathbb{T}_{m})$ are bounded in  $L^{\infty}((0,\mathfrak{T})\times\Omega)$. As a result, by standard energy estimates we obtain the uniform bounds
$\theta_{m}$ in $L^{2}(0,\mathfrak{T};H^{1}(\Omega;\mathbb{R}))$ and
$\theta_{m,t}$ in $L^{2}((0,\mathfrak{T})\times\Omega)$. In consequence of the Aubin-Lions theorem, the following strong convergence in $L^{2}((0,\mathfrak{T})\times\Omega)$ holds $\theta_{m}\underset{m\to\infty}{\to}\tilde{\theta}$. By assumption \eqref{convergence_nk} once more, we notice that $\norm{\textrm{G}(\theta_{m},\mathbb{T}_{m})}_{L^{2}((0,\mathfrak{T})\times\Omega)}\leq C$. Therefore, the sequence $\{\textrm{G}(\theta_{m},\mathbb{T}_{m})\}_{m=1}^{\infty}$ admits a weakly convergent subsequence. By the uniqueness of the limit, the continuity of $\textrm{G}$, and the pointwise convergence of $\theta_m \to \tilde{\theta}$ and $\mathbb{T}_m \to \mathbb{T}$, the weak limit must be $\textrm{G}(\tilde{\theta},\mathbb{T})\in L^{2}((0,\mathfrak{T}\times\Omega)$. Moreover, using the dominated convergence theorem, this convergence can be upgraded from weak to strong in $L^2((0,\mathfrak{T})\times\Omega)$. Passing to the limit in the weak formulation of \eqref{theta_m_A_cont} and comparing it with the weak formulation for $\theta$, we obtain $\tilde{\theta}=\theta$. We are now in a position to improve the convergence of the sequence $\theta_{m}$ to $L^{2}(0,\mathfrak{T};H^{1}(\Omega))$.

We set $\psi_{m}=\theta(u,\mathbb{T})-\theta_{m}(u_{m},\mathbb{T}_{m}):=\theta-\theta_{m}$. Writing the energy balance \eqref{theta_nk-def_sol} for both $\theta$ and $\theta_{m}$, and subtracting the resulting relations, we obtain for every $\phi\in H^{1}(\Omega;\mathbb{R})$
\begin{align}
\label{psi_m_est_0}
    \int\limits_{\Omega}\psi_{m,t}\phi\,\di x
	&+\int\limits_{\Omega}\nabla\psi_{m} \nabla \phi\,\di x
	=-\int\limits_{\Omega}\theta(\diver{ } u_{t}-\diver{ } u_{m,t})\phi\, \di x
    -\int\limits_{\Omega}\psi_{m}\diver{ } u_{m,t}\phi\, \di x\nonumber\\&
	+\int\limits_{\Omega}\Big(\textrm{T}_{n}\big( \textrm{G}(\theta,\mathbb{T}):\mathbb{T}\big)-\textrm{T}_{n}\big( \textrm{G}(\theta_{m},\mathbb{T}_{m}):\mathbb{T}_{m}\big)\Big)\phi\,\di x\,.
\end{align}
We take $\phi=\psi_{m}\in H^{1}(\Omega;\mathbb{R})$ in \eqref{psi_m_est_0} and get after using H\"older's inequality
    \begin{align}\label{psi_m_est}
    &\frac{1}{2}\frac{\di{ }}{\di t}\int\limits_{\Omega}\abs{\psi_{m}}^{2}\di x
	+\int\limits_{\Omega}\abs{\nabla\psi_{m}}^{2}\di x
	=-\int\limits_{\Omega}\theta(\diver{ } u_{t}-\diver{ } u_{m,t})\psi_{m} \di x
    -\int\limits_{\Omega}\diver{ } u_{m,t}\psi_{m}^{2} \di x\nonumber\\&
	+\int\limits_{\Omega}\Big(\textrm{T}_{n}\big( \textrm{G}(\theta,\mathbb{T}):\mathbb{T}\big)-\textrm{T}_{n}\big( \textrm{G}(\theta,\mathbb{T}):\mathbb{T}_{m}\big)\Big)\psi_{m}\di x\nonumber\\&
    +\int\limits_{\Omega}\Big(\textrm{T}_{n}\big( \textrm{G}(\theta,\mathbb{T}):\mathbb{T}_{m}\big)-\textrm{T}_{n}\big( \textrm{G}(\theta_{m},\mathbb{T}_{m}):\mathbb{T}_{m}\big)\Big)\psi_{m}\di x\nonumber\\&\leq
    \norm{\theta}_{L^{2}(\Omega)}\norm{\diver{ } u_{t}-\diver{ } u_{m,t}}_{L^{\infty}(\Omega)}\norm{\psi_{m}}_{L^{2}(\Omega)}
    +\norm{\diver{ } u_{m,t}}_{L^{\infty}(\Omega)}\norm{\psi_{m}}_{L^{2}(\Omega)}^{2}\nonumber\\&
    +\int\limits_{\Omega}\abs{\textrm{G}(\theta,\mathbb{T})}\abs{\mathbb{T}-\mathbb{T}_{m}}\abs{\psi_{m}}\di x
    +\int\limits_{\Omega}\abs{\textrm{G}(\theta,\mathbb{T})-\textrm{G}(\theta_{m},\mathbb{T}_{m})}\abs{\mathbb{T}_{m}}\abs{\psi_{m}}\di x\,.
    \end{align}
Using Gronwall's lemma, we have 
    \begin{align*}
    \norm{\psi_{m}}^2_{L^{\infty}(0,\mathfrak{T};L^{2}(\Omega))}
    +\norm{\nabla\psi_{m}}^2_{L^{2}((0,\mathfrak{T})\times\Omega)}
    &\leq C\big(
    \norm{\diver{ } u_{t}-\diver{ } u_{m,t}}^2_{L^{\infty}((0,\mathfrak{T})\times\Omega)}\nonumber\\&\quad+\norm{\mathbb{T}-\mathbb{T}_{m}}^2_{L^{\infty}((0,\mathfrak{T})\times\Omega)}
    \nonumber\\&\quad+\norm{\textrm{G}(\theta,\mathbb{T})-\textrm{G}(\theta_{m},\mathbb{T}_{m})}^2_{L^{2}((0,\mathfrak{T})\times\Omega)}\big)\,,
    \end{align*}
    so finally
    \begin{align}\label{psi_m_est_final}
    \norm{\psi_{m}}^2_{L^{2}(0,\mathfrak{T};H^{1}(\Omega))}&\leq C\big(
    \norm{\diver{ } u_{t}-\diver{ } u_{m,t}}^2_{L^{\infty}((0,\mathfrak{T})\times\Omega)}\nonumber\\&\quad+\norm{\mathbb{T}-\mathbb{T}_{m}}^2_{L^{\infty}((0,\mathfrak{T})\times\Omega)}
    +\norm{\textrm{G}(\theta,\mathbb{T})-\textrm{G}(\theta_{m},\mathbb{T}_{m})}^2_{L^{2}((0,\mathfrak{T})\times\Omega)}\big)\,.
    \end{align} 
Now, we define $(a_{u}^{m},a_{\mathbb{T}}^{m}):=\mathcal{A}(u,\mathbb{T})-\mathcal{A}(u_{m},\mathbb{T}_{m})$ and write both the momentum equation and the inelastic constitutive equation for these differences.
    \begin{align}
    \label{operator}
	\int\limits_{\Omega} a_{u,tt}^{m}\varphi\,\di x 
	+ \int\limits_{\Omega}a_{\mathbb{T}}^{m}:\varepsilon(\varphi)\,\di x
	=&\int\limits_{\Omega}\psi_{m}\diver{ } \varphi\,\di x\,, \qquad \forall_{\varphi\in \mathcal{W}^{n}}\,,
	\end{align}
	\begin{align}
     \label{operator_1}
	\int\limits_{\Omega} \mathbb{C}^{-1}a_{\mathbb{T},t}^{m}: \psi\,\di x
	+\int\limits_{\Omega}\Big(\textrm{G}(\theta,\mathbb{T})-\textrm{G}(\theta_{m},\mathbb{T}_{m})\Big):\psi\, \di x
	=\int\limits_{\Omega}\varepsilon(a_{u,t}^{m}) : \psi\,\di x\,, \quad \forall_{\psi\in \mathcal{V}^{k}}\,.
	\end{align}
    We take $\varphi=a_{u,t}^{m}$ in \eqref{operator} and $\psi=a_{\mathbb{T}}^{m}$ in \eqref{operator_1}, which are proper test functions since $\mathcal{A}(u,\mathbb{T}),\mathcal{A}(u_{m},\mathbb{T}_{m})\in X_{nk}$, and get
    \begin{align*}
	\frac{1}{2}\frac{\di{ }}{\di t}\int\limits_{\Omega} \abs{a_{u,t}^{m}}^{2}\di x 
	+ \int\limits_{\Omega}a_{\mathbb{T}}^{m}:\varepsilon(a_{u,t}^{m})\,\di x
	=&\int\limits_{\Omega}\psi_{m}\diver{ } a_{u,t}^{m}\,\di x\,,
	\end{align*}
	\begin{align*}
	\frac{1}{2}\frac{\di{ }}{\di t}\int\limits_{\Omega} \mathbb{C}^{-1}a_{\mathbb{T}}^{m}: a_{\mathbb{T}}^{m}\,\di x
	+\int\limits_{\Omega}\Big(\textrm{G}(\theta,\mathbb{T})-\textrm{G}(\theta_{m},\mathbb{T}_{m})\Big):a_{\mathbb{T}}^{m}\, \di x
	=\int\limits_{\Omega}\varepsilon(a_{u,t}^{m}) :a_{\mathbb{T}}^{m}\,\di x\,. 
	\end{align*}
We integrate by parts, sum both above equations, and obtain
\begin{align*}
	&\frac{1}{2}\frac{\di{ }}{\di t}\norm{a_{u,t}^{m}}_{L^{2}(\Omega)}^{2}
    +\frac{1}{2}\frac{\di{ }}{\di t}\int\limits_{\Omega} \mathbb{C}^{-1}a_{\mathbb{T}}^{m}: a_{\mathbb{T}}^{m}\,\di x\nonumber\\&
	= -\int\limits_{\Omega}\nabla\psi_{m}a_{u,t}^{m}\,\di x
    -\int\limits_{\Omega}\Big(\textrm{G}(\theta,\mathbb{T})-\textrm{G}(\theta_{m},\mathbb{T}_{m})\Big):a_{\mathbb{T}}^{m}\, \di x\nonumber\\&
    \leq \norm{\nabla \psi_{m}}_{L^{2}(\Omega)}\norm{a_{u,t}^{m}}_{L^{2}(\Omega)}
    +\norm{\textrm{G}(\theta,\mathbb{T})-\textrm{G}(\theta_{m},\mathbb{T}_{m})}_{L^{2}(\Omega)}\norm{a_{\mathbb{T}}^{m}}_{L^{2}(\Omega)}\nonumber\\&
    \leq C_1\Big(\norm{\nabla \psi_{m}}^2_{L^{2}(\Omega)}+\norm{\textrm{G}(\theta,\mathbb{T})-\textrm{G}(\theta_{m},\mathbb{T}_{m})}^2_{L^{2}(\Omega)}\Big)\nonumber\\&\quad+C_2\big(\norm{a_{u,t}^{m}}^2_{L^{2}(\Omega)}    +\int\limits_{\Omega} \mathbb{C}^{-1}a_{\mathbb{T}}^{m}: a_{\mathbb{T}}^{m}\di x\big)\,,
\end{align*}
where in the last inequality we have employed the positive definiteness of the operator $\mathbb{C}^{-1}$, while the constants $C_{1}$ and $C_{2}$ do not depend on $m$. Then, from \eqref{psi_m_est_final} and Gronwall's lemma we get $\norm{(a_{u}^{m},a_{\mathbb{T}}^{m})}_{X_{nk}}\underset{m\to\infty}{\to}0$, showing that the operator $\mathcal{A}$ is continuous.\\
  
\textbf{Compactness of $\mathcal{A}$ }: we take $(u,\mathbb{T})\in X_{nk}$ such that $\norm{(u,\mathbb{T})}_{X_{nk}}\leq R$, where $R>0$ is fixed. We will show that $\overline{\mathcal{A}(u,\mathbb{T})}$ is compact. We set as previously $\mathcal{A}(u,\mathbb{T}):=(\tilde{u},\tilde{\mathbb{T}})$. \\
    Let $\theta$ be a solution to the balance of energy for $(u,\mathbb{T})$, i.e.
     \begin{align}
     \label{comp_1}
    \int\limits_{\Omega} \tilde{u}_{tt}\varphi\,\di x 
	+ \int\limits_{\Omega}\tilde{\mathbb{T}}:\varepsilon(\varphi)\,\di x
	-&\int\limits_{\Omega}\theta\diver{ } \varphi\,\di x
	= \int\limits_{\Omega} f_{n}\varphi\, \di x\,, \quad \forall_{\varphi\in\mathcal{W}^{n}}\,,
    \end{align}
    \begin{align}
    \label{comp_2}
    \int\limits_{\Omega} \mathbb{C}^{-1}\tilde{\mathbb{T}}_{t}: \psi\,\di x
	+\int\limits_{\Omega}\textrm{G}(\theta,\tilde{\mathbb{T}}):\psi \,\di x
	=\int\limits_{\Omega}\varepsilon(\tilde{u_{t}}) :\psi\,\di x\,, \quad \forall_{\psi\in\mathcal{V}^{k}}\,.
    \end{align}
We choose the functions in \eqref{comp_1} and \eqref{comp_2} to be $\varphi=\tilde{u}_{tt}$ and $\psi=\tilde{\mathbb{T}}_{t}$, respectively, as test functions. These are admissible since, by the Carath\'eodory theorem, they exist almost everywhere on $[0,\mathfrak{T}]$ and belong to the spaces $\mathcal{W}^{n}$ and $\mathcal{V}^{k}$ with respect to the spatial variable. Consequently, we obtain
\begin{align}
\label{comp_3}
    \int\limits_{\Omega} \abs{\tilde{u}_{tt}}^{2}\,\di x 
	+ \int\limits_{\Omega}\tilde{\mathbb{T}}:\varepsilon(\tilde{u}_{tt})\,\di x
	-&\int\limits_{\Omega}\theta\diver{ } \tilde{u}_{tt}\,\di x
	= \int\limits_{\Omega} f_{n}\tilde{u}_{tt} \,\di x\,,
\end{align}
\begin{align}
\label{comp_4}
    \int\limits_{\Omega} \mathbb{C}^{-1}\tilde{\mathbb{T}}_{t}: \tilde{\mathbb{T}}_{t}\,\di x
	+\int\limits_{\Omega}\textrm{G}(\theta,\tilde{\mathbb{T}}):\tilde{\mathbb{T}}_{t} \,\di x
	=\int\limits_{\Omega}\varepsilon(\tilde{u_{t}}) :\tilde{\mathbb{T}}_{t}\,\di x\,.
\end{align}
Summing equations \eqref{comp_3} and \eqref{comp_4} and then integrating over $(0,\mathfrak{T})$, we obtain
\begin{align*}
    &\norm{\tilde{u}_{tt}}_{L^{2}((0,\mathfrak{T})\times\Omega)}^{2}
    +\int\limits_{0}^{\mathfrak{T}}\int\limits_{\Omega} \mathbb{C}^{-1}\tilde{\mathbb{T}}_{t}: \tilde{\mathbb{T}}_{t}\,\di x \di t
    =\int\limits_{0}^{\mathfrak{T}}\int\limits_{\Omega} f_{n}\tilde{u}_{tt} \,\di x \di t
	+\int\limits_{0}^{\mathfrak{T}}\int\limits_{\Omega}\theta\diver{ } \tilde{u}
    _{tt}\,\di x \di t\nonumber \\ &\quad
    -\int\limits_{0}^{\mathfrak{T}}\int\limits_{\Omega}\tilde{\mathbb{T}}:\varepsilon(\tilde{u}_{tt})\,\di x\di t
    -\int\limits_{0}^{\mathfrak{T}}\int\limits_{\Omega}\textrm{G}(\theta,\tilde{\mathbb{T}}):\tilde{\mathbb{T}}_{t} \,\di x \di t
    +\int\limits_{0}^{\mathfrak{T}}\int\limits_{\Omega}\varepsilon(\tilde{u_{t}}) :\tilde{\mathbb{T}}_{t}\,\di x \di t\,.
\end{align*}
From the integration by parts, we have
\begin{align*}
    \int\limits_{0}^{\mathfrak{T}}\int\limits_{\Omega}\theta\diver{ } \tilde{u}
    _{tt}\,\di x \di t
    =- \int\limits_{0}^{\mathfrak{T}}\int\limits_{\Omega}\nabla\theta\tilde{u}
    _{tt}\,\di x \di t
    \leq \norm{\nabla\theta}_{L^{2}((0,\mathfrak{T})\times\Omega)}\norm{\tilde{u}
    _{tt}}_{L^{2}((0,\mathfrak{T})\times\Omega)}\,,
\end{align*}
    \begin{align*}
        &-\int\limits_{0}^{\mathfrak{T}}\int\limits_{\Omega}\tilde{\mathbb{T}}:\varepsilon(\tilde{u}_{tt})\,\di x\di t=
        \int\limits_{0}^{\mathfrak{T}}\int\limits_{\Omega}\tilde{\mathbb{T}}_{t}:\varepsilon(\tilde{u}_{t})\,\di x\di t
        -\int\limits_{\Omega}\tilde{\mathbb{T}}(t):\varepsilon(\tilde{u}_{t}(t))\,\di x\nonumber\\&\quad
        +\int\limits_{\Omega}\tilde{\mathbb{T}}(0):\varepsilon(\tilde{u}_{t}(0))\,\di x
        \leq \norm{\tilde{\mathbb{T}}_{t}}_{L^{2}((0,\mathfrak{T})\times\Omega)}\norm{\varepsilon(\tilde{u}_{t})}_{L^{2}((0,\mathfrak{T})\times\Omega)}\nonumber\\& \quad
        +\norm{\tilde{\mathbb{T}}}_{L^{\infty}(0,\mathfrak{T};L^{2}(\Omega))}\norm{\varepsilon(\tilde{u}_{t})}_{L^{1}(0,\mathfrak{T};L^{2}(\Omega))}
        +\norm{\tilde{\mathbb{T}}(0)}_{L^{2}(\Omega)}\norm{\varepsilon(\tilde{u}_{t}(0))}_{L^{2}(\Omega)}\,.
    \end{align*}

    Finally, we obtain
    \begin{align*}
    &(1-\varepsilon_{1}+\varepsilon_{2})\norm{\tilde{u}_{tt}}_{L^{2}((0,\mathfrak{T})\times\Omega)}^{2}+(1-\varepsilon_{3}-\varepsilon_{4})\norm{\tilde{\mathbb{T}}_{t}}_{L^{2}((0,\mathfrak{T})\times\Omega)}^{2} \leq 
    C(\varepsilon_{1})\norm{f_{n}}_{L^{2}((0,\mathfrak{T})\times\Omega)}^{2}\nonumber\\&\quad
    +C(\varepsilon_{2})\norm{\nabla\theta}_{L^{2}((0,\mathfrak{T})\times\Omega)}^{2}
    +C(\varepsilon_{3})\norm{\textrm{G}(\theta,\tilde{\mathbb{T}})}_{L^{2}((0,\mathfrak{T})\times\Omega)}^{2}
    +C(\varepsilon_{4})\norm{\varepsilon(\tilde{u}_{t})}_{L^{2}((0,\mathfrak{T})\times\Omega)}^{2}\nonumber\\&\quad
    +\norm{\tilde{\mathbb{T}}}_{L^{\infty}(0,\mathfrak{T};L^{2}(\Omega))}\norm{\varepsilon(\tilde{u}_{t})}_{L^{1}(0,\mathfrak{T};L^{2}(\Omega))}
    +\norm{\tilde{\mathbb{T}}(0)}_{L^{2}(\Omega)}\norm{\varepsilon(\tilde{u}_{t}(0))}_{L^{2}(\Omega)}\,.
    \end{align*}
    Because of \eqref{theta_nk_bou}, we obtain that $\norm{\nabla\theta}_{L^{2}((0,\mathfrak{T})\times\Omega)}^{2}\leq \tilde{C}(R)$. The fact that all norms in $X_{nk}$ are equivalent, we get that the third, fourth, and fifth terms on the right-hand side are bounded. Moreover, the initial conditions are the projections onto $\mathcal{W}^{n}$ and $\mathcal{V}^{k}$, we finally obtain that
    \begin{align*}
    &\norm{\tilde{u}_{tt}}_{L^{2}((0,\mathfrak{T})\times\Omega)}+\norm{\tilde{\mathbb{T}}_{t}}_{L^{2}((0,\mathfrak{T})\times\Omega)}\leq C(R)
    \end{align*}
    what can be rewritten as
    \begin{align*}
    &\norm{\tilde{u}_{t}}_{H^{1}(0,\mathfrak{T};L^{2}(\Omega))}+\norm{\tilde{\mathbb{T}}}_{H^{1}(0,\mathfrak{T};L^{2}(\Omega))} \leq C(R)\,.
    \end{align*}
By the compact embedding $H^{1}(0,\mathfrak{T}) \hookrightarrow\hookrightarrow C([0,\mathfrak{T}])$, the operator $\mathcal{A}$ maps bounded sets in $(u,\mathbb{T})$ to relatively compact sets. Hence, $\mathcal{A}$ is compact.\\

\textbf{Boundedness of the associated set: } let us set 
$$Y=\big\{ (u,\mathbb{T})\in X_{nk}:\exists_{\lambda\in(0,1]} \quad\mathcal{A}(u,\mathbb{T})=\frac{1}{\lambda}(u,\mathbb{T}) \big\}\,.$$
We take $(u,\mathbb{T})\in Y$ and $\lambda\in (0,1]$ such that $\frac{1}{\lambda}u=\mathcal{A}u$, $\frac{1}{\lambda}\mathbb{T}=\mathcal{A}\mathbb{T}$. Then $\big( (\frac{1}{\lambda}u,\frac{1}{\lambda}\mathbb{T}),\theta(u,\mathbb{T})\big)$ is a solution of the approximate system \eqref{approx_nk_1}-\eqref{approx_nk_3}. The energy balance for this system can be written as
    \begin{align*}
    &\frac{1}{2} \int\limits_{\Omega}\abs{\frac{1}{\lambda}u_{t}(t)}^{2}\di x
    +\frac{1}{2}\int\limits_{\Omega}\mathbb{C}^{-1}\big(\frac{1}{\lambda}\mathbb{T}(t)\big):\big(\frac{1}{\lambda}\mathbb{T}(t)\big)\,\di x
    +\int\limits_{\Omega} \theta(t)\, \di x\nonumber\\&
    =\frac{1}{2} \int\limits_{\Omega}\abs{\frac{1}{\lambda}u_{t}(0)}^{2}\di x
    +\frac{1}{2}\int\limits_{\Omega}\mathbb{C}^{-1}\big(\frac{1}{\lambda}\mathbb{T}(0)\big):\big(\frac{1}{\lambda}\mathbb{T}(0)\big)\,\di x
    +\int\limits_{\Omega} \theta(0) \di x
    +\int\limits_{0}^{t}\int\limits_{\Omega}f_{n}\frac{1}{\lambda}u_{t}\, \di x \di t\nonumber\\&
    =\frac{1}{2 \lambda^{2}} \int\limits_{\Omega}\abs{P_{\mathcal{W}^{n}}u_{1}}^{2}\,\di x
    +\frac{1}{2 \lambda^{2}}\int\limits_{\Omega}\mathbb{C}^{-1}P_{\mathcal{V}^{n}}\mathbb{T}_{0}:P_{\mathcal{V}^{n}}\mathbb{T}_{0}\,\di x
    +\int\limits_{\Omega} \theta_{0n}\, \di x
    +\int\limits_{0}^{t}\int\limits_{\Omega}f_{n}\frac{1}{\lambda}u_{t} \,\di x \di t \nonumber\\&
    \leq
    \frac{1}{2 \lambda^{2}} \int\limits_{\Omega}\abs{P_{\mathcal{W}^{n}}u_{1}}^{2}\,\di x
    +\frac{1}{2 \lambda^{2}}\int\limits_{\Omega}\mathbb{C}^{-1}P_{\mathcal{V}^{n}}\mathbb{T}_{0}:P_{\mathcal{V}^{n}}\mathbb{T}_{0}\,\di x
    +\int\limits_{\Omega} \theta_{0n}\, \di x\nonumber\\&\quad
    +C(\varepsilon)\norm{f_{n}}_{L^{2}((0,\mathfrak{T})\times\Omega)}^{2}+\frac{\varepsilon}{\lambda^{2}}\norm{u_{t}}_{L^{\infty}(0,\mathfrak{T};L^{2}(\Omega))}^{2}
    \end{align*}
    for any $\varepsilon>0$, then we get the following inequality
    \begin{align*}
    &\frac{1-\varepsilon}{2\lambda^{2}} \norm{u_{t}}_{L^{\infty}(0,\mathfrak{T};L^{2}(\Omega))}^{2}
    +\frac{C}{2 \lambda^{2}}\norm{\mathbb{T}}_{L^{\infty}(0,\mathfrak{T};L^{2}(\Omega))}^{2}  
    \leq
    \frac{1}{2 \lambda^{2}} \int\limits_{\Omega}\abs{P_{\mathcal{W}^{n}}u_{1}}^{2}\di x\nonumber\\& 
    +\frac{1}{2 \lambda^{2}}\int\limits_{\Omega}\mathbb{C}^{-1}P_{\mathcal{V}^{n}}\mathbb{T}_{0}:P_{\mathcal{V}^{n}}\mathbb{T}_{0}\di x
    +\int\limits_{\Omega} \theta_{0n} \di t
    +C(\varepsilon)\norm{f_{n}}_{L^{2}((0,\mathfrak{T})\times\Omega)}^{2}
    \end{align*}
    which implies that $\norm{(u,\mathbb{T})}_{X_{nk}}\leq C(n)$, so $Y$ is bounded.\\

The considerations in this subsection so far lead to the following existence theorem for the approximated system \eqref{approx_nk_1}-\eqref{approx_nk_3}.    
\begin{theorem}
Let the assumptions of Theorem \ref{TWIERDZENIE} be satisfied. Then, for each $n,k \in \mathbb{N}$, there exists a solution $(u_{nk},\mathbb{T}_{nk},\theta_{nk})$ to the approximate problem \eqref{approx_nk_1}-\eqref{approx_nk_3}. Moreover, for each $n,k \in \mathbb{N}$, the component $\theta_{nk}$ satisfies $\theta_{nk}\in L^{2}(0,\mathfrak{T};H^{2}(\Omega;\mathbb{R}))\cap H^{1}(0,\mathfrak{T};L^{2}(\Omega;\mathbb{R}))$.
\end{theorem}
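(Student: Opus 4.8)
The plan is to collect the properties of the operator $\mathcal{A}$ established in this subsection and feed them into Schaefer's (Leray--Schauder) fixed point theorem, and then to upgrade the regularity of the temperature by a parabolic maximal-regularity argument that exploits the truncation $\textrm{T}_{n}$. First I would record that the four steps above show that $\mathcal{A}\colon X_{nk}\to X_{nk}$ is well defined, continuous and compact, and that the set $Y=\{(u,\mathbb{T})\in X_{nk}:\mathcal{A}(u,\mathbb{T})=\lambda^{-1}(u,\mathbb{T})\text{ for some }\lambda\in(0,1]\}$ is bounded by a constant depending only on $n$ and the data, and crucially \emph{not} on $\lambda$. These are exactly the hypotheses of Schaefer's theorem, which therefore yields a fixed point $(u_{nk},\mathbb{T}_{nk})\in X_{nk}$ of $\mathcal{A}$.

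Next I would identify the solution. Setting $\theta_{nk}:=\theta_{nk}(u_{nk},\mathbb{T}_{nk})$, i.e. the solution of \eqref{theta_nk-def_sol} associated with this fixed point, the identity $(\tilde{u}_{nk},\tilde{\mathbb{T}}_{nk})=\mathcal{A}(u_{nk},\mathbb{T}_{nk})=(u_{nk},\mathbb{T}_{nk})$ turns \eqref{approx_sys_u}--\eqref{approx_sys_T} into \eqref{approx_nk_1}--\eqref{approx_nk_2}, while \eqref{theta_nk-def_sol} is precisely \eqref{approx_nk_3}; hence the triple $(u_{nk},\mathbb{T}_{nk},\theta_{nk})$ solves the approximate system, with the regularity \eqref{reg_apprx} (the bounds $u_{nk,tt},\mathbb{T}_{nk,t}\in L^{2}(0,\mathfrak{T};L^{2}(\Omega))$ being the ones derived in the compactness step, and $\theta_{nk}\in L^{2}(0,\mathfrak{T};H^{1}(\Omega))\cap H^{1}(0,\mathfrak{T};L^{2}(\Omega))$ coming from the parabolic theory used to define $\theta_{nk}$).

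For the claimed higher regularity of $\theta_{nk}$ I would read \eqref{approx_nk_3} as the linear heat equation $\theta_{nk,t}-\Delta\theta_{nk}=h_{nk}$ with homogeneous Neumann boundary condition and initial datum $\theta_{0n}\in H^{1}(\Omega)$ (recall \eqref{init_cond_nk_4}), where $h_{nk}:=\textrm{T}_{n}\big(\textrm{G}(\theta_{nk},\mathbb{T}_{nk}):\mathbb{T}_{nk}\big)-\theta_{nk}\operatorname{div}u_{nk,t}$. Here the whole point is that $|\textrm{T}_{n}(\cdot)|\le n$, so the reaction term lies in $L^{\infty}((0,\mathfrak{T})\times\Omega)$, while $\operatorname{div}u_{nk,t}\in L^{\infty}((0,\mathfrak{T})\times\Omega)$ because $u_{nk,t}\in H^{1}(0,\mathfrak{T};\mathcal{W}^{n})\hookrightarrow C([0,\mathfrak{T}];\mathcal{W}^{n})$ and $\mathcal{W}^{n}$ is a finite-dimensional subspace of $C_{0}^{\infty}(\Omega;\mathbb{R}^{3})$; combined with $\theta_{nk}\in L^{\infty}(0,\mathfrak{T};L^{2}(\Omega))$ this gives $h_{nk}\in L^{2}((0,\mathfrak{T})\times\Omega)$. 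Maximal $L^{2}$-parabolic regularity for $-\Delta$ with Neumann conditions on the smooth bounded domain $\Omega$ (together with $\theta_{0n}\in H^{1}(\Omega)$) then yields $\theta_{nk}\in L^{2}(0,\mathfrak{T};H^{2}(\Omega))\cap H^{1}(0,\mathfrak{T};L^{2}(\Omega))$, as asserted.

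The genuinely delicate inputs — the Gr\"onwall-plus-Aubin--Lions proof of continuity of $\mathcal{A}$ (needed precisely because $\textrm{G}$ is only continuous and temperature dependent) and the $\lambda$-uniform energy bound defining $Y$ — have already been carried out above, so the remaining argument is essentially bookkeeping. The one observation worth isolating is that the truncation $\textrm{T}_{n}$ is exactly what makes the heat-equation source square-integrable at each fixed level, which is why the $H^{2}$-regularity of $\theta_{nk}$ is available here even though no such integrability is expected for the source in the limit problem.
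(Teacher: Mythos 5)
Your proof follows essentially the same route as the paper's: invoke Schaefer's fixed point theorem using the continuity, compactness, and a priori boundedness established in the preceding steps, identify the fixed point as a solution of the approximate system, and then observe that the truncation $\textrm{T}_{n}$ together with the finite-dimensionality of $\mathcal{W}^{n}$ forces the right-hand side of the heat equation into $L^{2}((0,\mathfrak{T})\times\Omega)$, so that standard parabolic maximal $L^{2}$-regularity gives $\theta_{nk}\in L^{2}(0,\mathfrak{T};H^{2}(\Omega))\cap H^{1}(0,\mathfrak{T};L^{2}(\Omega))$. Your write-up is a more explicit unpacking of the paper's terse ``standard regularity theory'' claim, but there is no substantive difference in approach.
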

\begin{proof} 
From Sch\"afer's fixed point theorem, there exists a fixed point $(u,\mathbb{T})$ of the mapping $\mathcal{A}$. The construction of the operator $\mathcal{A}$ implies that for each $n,k \in \mathbb{N}$, there exists a solution $(u_{nk},\mathbb{T}_{nk},\theta_{nk})$ to the approximate problem \eqref{approx_nk_1}-\eqref{approx_nk_3} possessing the regularity specified in \eqref{reg_apprx}. Additionally, we note that the sequence
 \begin{align*}
-\theta_{nk}\diver{ }u_{nk,t}+\textrm{T}_{n}\big(\textrm{G}(\theta_{nk},\mathbb{T}_{nk}):\mathbb{T}_{nk}\big) 
\end{align*}
is bounded in $L^{2}((0,\mathfrak{T})\times\Omega)$. Hence, standard regularity theory applied to equation \eqref{approx_nk_3} implies that  for each $n,k \in \mathbb{N}$, $\theta_{nk}$ belongs to the space $L^{2}(0,\mathfrak{T};H^{2}(\Omega;\mathbb{R}))\cap H^{1}(0,\mathfrak{T};L^{2}(\Omega;\mathbb{R}))$.
\end{proof} 
At the end of this subsection, we prove the following lemma, which is similar to \cite{CieslakMuha_3D}, from which the positivity of the temperature at each approximation step will follow. 
\begin{lemma}\label{positivity_of_theta}
Let $a \in L^{\infty}((0,\mathfrak{T}) \times \Omega)$, $b \in L^{2}((0,\mathfrak{T}) \times \Omega)$ with $b \ge 0$, and $\theta_0 \in H^1(\Omega;\mathbb{R})$ satisfy $\theta_0 > 0$ almost everywhere in $\Omega$. Let $\theta \in L^2(0,\mathfrak{T}; H^2(\Omega;\mathbb{R})) \cap H^1(0,\mathfrak{T}; L^2(\Omega;\mathbb{R}))$ be the solution to the system
\begin{align}\label{positive_theta_system_1}
    \theta_t - \Delta \theta &= -a \theta + b\,, && \text{in } (0,\mathfrak{T}) \times \Omega\,, \nonumber\\
    \frac{\partial \theta}{\partial \nu} &= 0\,, && \text{on } (0,\mathfrak{T}) \times \partial\Omega\,, \\
    \theta(0) &= \theta_0\,,&& \text{in } \Omega\,. \nonumber
\end{align}
Then $\theta$ remains strictly positive for all $t \in [0,\mathfrak{T}]$ and almost every $x \in \Omega$, and in particular,
\begin{align*}
    \theta(t,x) \ge \left( \min_{x \in \Omega} \theta_0(x) \right) \exp\Bigg(- \int_0^t \|a(s)\|_{L^{\infty}(\Omega)} \, ds \Bigg) > 0\,.
\end{align*}
\end{lemma}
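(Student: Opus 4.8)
The plan is to run a comparison (barrier) argument against the explicit, spatially homogeneous subsolution
\[
w(t):=m_{0}\,\exp\!\Big(-\int_{0}^{t}\norm{a(s)}_{L^{\infty}(\Omega)}\,\di s\Big),\qquad m_{0}:=\min_{x\in\Omega}\theta_{0}(x),
\]
where $m_{0}$ is read as the essential infimum of $\theta_{0}$, a strictly positive constant for the approximate data $\theta_{0n}$ employed later in the paper. First I would record that, since $w$ does not depend on $x$ and $w'(t)=-\norm{a(t)}_{L^{\infty}(\Omega)}\,w(t)$, it satisfies $w_{t}-\Delta w=-\norm{a(t)}_{L^{\infty}(\Omega)}w$; subtracting this from \eqref{positive_theta_system_1} and writing $-a\theta=-a(\theta-w)-aw$ shows that $z:=\theta-w$ is, in the weak sense, a nonnegative supersolution of a linear parabolic problem:
\[
z_{t}-\Delta z+a\,z=b+\big(\norm{a(t)}_{L^{\infty}(\Omega)}-a\big)w\ \ge\ 0,\qquad \frac{\partial z}{\partial\nu}=0,\qquad z(0)=\theta_{0}-m_{0}\ge 0,
\]
the right-hand side being nonnegative because $b\ge 0$, $w>0$, and $a\le\norm{a(t)}_{L^{\infty}(\Omega)}$ pointwise. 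This is the standard sub-solution device (cf. \cite{CieslakMuha_3D}), the point being that the multiplicative factor $-\norm{a(t)}_{L^{\infty}(\Omega)}$ exactly absorbs the unsigned zeroth-order coefficient $a$.

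Next I would establish $z\ge 0$ by the classical negative-part test. The regularity hypothesis $\theta\in L^{2}(0,\mathfrak{T};H^{2}(\Omega))\cap H^{1}(0,\mathfrak{T};L^{2}(\Omega))$ carries over to $z$ (as $w$ is Lipschitz in $t$ and constant in $x$), so $z^{-}:=\max\{-z,0\}\in L^{2}(0,\mathfrak{T};H^{1}(\Omega))$ with $\nabla z^{-}=-\nabla z\,\mathbf{1}_{\{z<0\}}$, and it is an admissible test function. Testing the $z$-equation with $z^{-}$ and using $\int_{\Omega}z_{t}z^{-}\,\di x=-\tfrac12\tfrac{\di}{\di t}\norm{z^{-}}_{L^{2}(\Omega)}^{2}$, the Neumann condition to discard the boundary term in $-\int_{\Omega}\Delta z\,z^{-}\,\di x=\int_{\Omega}\nabla z\cdot\nabla z^{-}\,\di x=-\norm{\nabla z^{-}}_{L^{2}(\Omega)}^{2}$, the identity $\int_{\Omega}a\,z\,z^{-}\,\di x=-\int_{\Omega}a\,(z^{-})^{2}\,\di x$ (since $z=-z^{-}$ on $\{z<0\}$ and $z^{-}=0$ elsewhere), and the fact that the nonnegative right-hand side tested against $z^{-}\ge 0$ stays nonnegative, I would arrive at
\[
\frac{1}{2}\frac{\di}{\di t}\norm{z^{-}(t)}_{L^{2}(\Omega)}^{2}+\norm{\nabla z^{-}(t)}_{L^{2}(\Omega)}^{2}\ \le\ -\int_{\Omega}a\,(z^{-})^{2}\,\di x\ \le\ \norm{a(t)}_{L^{\infty}(\Omega)}\norm{z^{-}(t)}_{L^{2}(\Omega)}^{2}.
\]
Dropping the (nonnegative) gradient term and recalling $z^{-}(0)=(\theta_{0}-m_{0})^{-}=0$, Gronwall's lemma forces $\norm{z^{-}(t)}_{L^{2}(\Omega)}=0$ for every $t\in[0,\mathfrak{T}]$, i.e. $\theta\ge w>0$, which is exactly the asserted lower bound.

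I do not expect a genuine obstacle. The only points requiring care are: (i) the justification of the chain rule $\tfrac{\di}{\di t}\tfrac12\norm{z^{-}}_{L^{2}(\Omega)}^{2}=\int_{\Omega}z_{t}z^{-}\,\di x$, which is standard once one knows $z\in H^{1}(0,\mathfrak{T};L^{2}(\Omega))$ so that $t\mapsto z(t,x)$ is absolutely continuous for a.e. $x$ and the truncation $r\mapsto\max\{-r,0\}$ is Lipschitz; and (ii) ensuring that $m_{0}=\min_{\Omega}\theta_{0}$ is genuinely a positive constant — otherwise the stated bound is vacuous — which is precisely where the construction of $\theta_{0n}$, bounded below by a positive constant, enters. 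Everything else is bookkeeping with the signs of $b$, $a$, and $w$, and with the Neumann boundary term.
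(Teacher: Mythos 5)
Your proposal is correct and follows essentially the same strategy as the paper: both introduce the comparison function $C_0\,e^{-\int_0^t\|a(s)\|_{L^\infty(\Omega)}\,\di s}$, subtract it from $\theta$ to produce a nonnegative supersolution of a linear parabolic inequality, test against the negative part, and conclude via Gronwall. Your write-up is in fact somewhat more careful with the sign bookkeeping (the paper's intermediate display has minor typographical slips, e.g. $|\nabla d^-|^2$ where $(d^-)^2$ is meant), but the argument is the same.
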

 \begin{proof} 
We set 
$$C_{0}=\min\limits_{x\in\Omega}\theta_{0}(x)\quad \textrm{and}\quad d(t,x)=\theta(t,x)-C_{0}e^{-\int\limits_{0}^{t}\norm{a(s)}_{L^{\infty}(\Omega)}\di s}\,.$$
Rewriting system \eqref{positive_theta_system_1} in terms of $d$ gives
\begin{align}\label{positive_theta_system_2}
        &d_{t}-\Delta d+ad=C_{0}e^{-\int\limits_{0}^{t}\norm{a(s)}_{L^{\infty}(\Omega)}\di s}\big(\norm{a(t)}_{L^{\infty}(\Omega)}-a \big)+b\,,\quad\text{ in } (0,\mathfrak{T})\times\Omega,\nonumber\\
        &\frac{\partial d}{\partial\nu}=0\,, \quad \quad \quad \quad \quad \quad \quad \quad \quad  \quad \quad \quad \quad \quad \quad \quad \quad \quad \quad \quad  \quad \quad\, \text{ on } (0,\mathfrak{T})\times\partial\Omega\,,\\
        &d(0)=\theta_{0}-C_{0}\geq 0\,, \qquad \qquad \qquad \qquad \qquad \qquad \quad \quad \quad  \quad \quad\,\,\, \text{in } \Omega\,.\nonumber
\end{align}
From \eqref{positive_theta_system_2}$_1$\,, it follows that
        \begin{align}\label{d_inequality}
            d_{t}-\Delta d+ad\geq 0\,.
        \end{align}     
Multiplying \eqref{d_inequality} by the negative part $d^- = \max\{0, -d\}$, integrating over $\Omega$ and $(0,t)$ for $t < \mathfrak{T}$, and observing that $d^-(0) = 0$, we obtain
        \begin{align*}
            \int\limits_{0}^{t}\int\limits_{\Omega}d_{t}^{-}d^{-} \,\di x \di t +\int\limits_{0}^{t}\int\limits_{\Omega}\abs{\nabla d^{-}}^{2}\,\di x \di t
            \leq 
            -\int\limits_{0}^{t}\int\limits_{\Omega}a\abs{\nabla d^{-}}^{2}\,\di x \di t\,,
        \end{align*}
which gives
\begin{align*}
            \frac{1}{2}\norm{d^{-}(t)}_{L^{2}(\Omega)}^{2}
            \leq \frac{1}{2}\norm{d^{-}(0)}_{L^{2}(\Omega)}^{2}
            +C\int\limits_{0}^{t}\norm{d^{-}(s)}^{2}\di t\,.
\end{align*}
It follows from Gronwall's lemma that $d(t,x) \ge 0$ for almost every $x \in \Omega$ and all $t \in [0,\mathfrak{T}]$. Consequently,
\[
\theta(t,x) \ge C_0 \, e^{-\int_0^t \|a(s)\|_{L^\infty(\Omega)} \, ds} > 0
\quad \text{for almost every } x \in \Omega \text{ and all } t \in [0,\mathfrak{T}]\,.
\]
\end{proof} 
\begin{remark}
It follows from Lemma \ref{positivity_of_theta} that, for each $n,k \in \mathbb{N}$, 
\[
\theta_{nk}(t,x) > 0 \quad \text{for almost every } x \in \Omega \text{ and all } t \in [0,\mathfrak{T}].
\]
\end{remark}
\section{Limit passage in $k$ for fixed $n$}
In this section, for fixed $n\in\mathbb{N}$, we aim to pass to the limit $k\to\infty$ in equations \eqref{approx_nk_1}-\eqref{approx_nk_3}. This is one of the crucial steps in the proof of the main result of this paper, since after taking the limit $k\to\infty$, the inelastic constitutive relation \eqref{approx_nk_2} will be satisfied pointwise almost everywhere for $(t,x)\in (0,\mathfrak{T})\times\Omega$
 for each $n\in\mathbb{N}$. We therefore begin with deriving certain bounds for the solutions to the auxiliary problem \eqref{approx_nk_1}-\eqref{approx_nk_3}, which will allow us to choose appropriate subsequences and pass to the limit with respect to the parameter $k\to\infty$.
\begin{proposition}\label{Prop_est_fixed_n}
Assume that the hypotheses of Theorem~\ref{TWIERDZENIE} hold. 
Then, for every $\mathfrak{T}>0$ there exists a constant $C(\mathfrak{T})>0$, 
independent of $k$ (and also of $n$), such that the approximate solutions 
$(u_{nk},\mathbb{T}_{nk},\theta_{nk})$ satisfy the uniform estimate
\begin{align*}
&\norm{u_{nk,t}}_{L^{\infty}\!\left(0,\mathfrak{T};L^{2}(\Omega;\mathbb{R}^{3})\right)}
+\norm{\mathbb{T}_{nk}}_{L^{\infty}\!\left(0,\mathfrak{T};L^{2}(\Omega;\mathcal{S}^{3})\right)}
+\norm{\theta_{nk}}_{L^{\infty}\!\left(0,\mathfrak{T};L^{1}(\Omega;\R)\right)}
\leq C(\mathfrak{T})\,.
\end{align*}
\end{proposition}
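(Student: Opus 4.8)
\emph{Proof strategy.} The assertion is the basic \emph{a priori} energy estimate for the semi-Galerkin scheme, and the plan is to obtain it by testing the three approximate equations with their natural energy multipliers and using the cancellation of the coupling terms that encodes the thermodynamic consistency of the model. Concretely, I would take $\varphi=u_{nk,t}\in\mathcal{W}^{n}$ in the momentum equation \eqref{approx_nk_1}, $\psi=\mathbb{T}_{nk}\in\mathcal{V}^{k}$ in the inelastic constitutive relation \eqref{approx_nk_2}, and the constant function $\phi\equiv 1\in H^{1}(\Omega;\mathbb{R})$ in the heat equation \eqref{approx_nk_3}. These are admissible multipliers: by \eqref{reg_apprx} and the finite-dimensionality of $\mathcal{W}^{n}$, $\mathcal{V}^{k}$, the maps $t\mapsto u_{nk,t}(t)$ and $t\mapsto\mathbb{T}_{nk}(t)$ are (absolutely) continuous into the respective approximation spaces and $\theta_{nk}\in H^{1}(0,\mathfrak{T};L^{2}(\Omega;\mathbb{R}))$, so that $\int_{\Omega}u_{nk,tt}u_{nk,t}\,\di x=\tfrac{1}{2}\tfrac{\di}{\di t}\norm{u_{nk,t}}_{L^{2}(\Omega)}^{2}$, $\int_{\Omega}\mathbb{C}^{-1}\mathbb{T}_{nk,t}:\mathbb{T}_{nk}\,\di x=\tfrac{1}{2}\tfrac{\di}{\di t}\int_{\Omega}\mathbb{C}^{-1}\mathbb{T}_{nk}:\mathbb{T}_{nk}\,\di x$ and $\int_{\Omega}\theta_{nk,t}\,\di x=\tfrac{\di}{\di t}\int_{\Omega}\theta_{nk}\,\di x$ hold for a.e. $t\in(0,\mathfrak{T})$.

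Summing the three identities, the term $\int_{\Omega}\mathbb{T}_{nk}:\varepsilon(u_{nk,t})\,\di x$ from \eqref{approx_nk_1} cancels against the right-hand side $\int_{\Omega}\varepsilon(u_{nk,t}):\mathbb{T}_{nk}\,\di x$ of \eqref{approx_nk_2}, the term $-\int_{\Omega}\theta_{nk}\operatorname{div}u_{nk,t}\,\di x$ cancels against $\int_{\Omega}\theta_{nk}\operatorname{div}u_{nk,t}\,\di x$ from \eqref{approx_nk_3}, and $\int_{\Omega}\nabla\theta_{nk}\cdot\nabla 1\,\di x=0$. Writing
\[
E_{nk}(t):=\tfrac{1}{2}\norm{u_{nk,t}(t)}_{L^{2}(\Omega)}^{2}+\tfrac{1}{2}\int_{\Omega}\mathbb{C}^{-1}\mathbb{T}_{nk}(t):\mathbb{T}_{nk}(t)\,\di x+\int_{\Omega}\theta_{nk}(t)\,\di x,
\]
one is led, for a.e. $t$, to
\[
\frac{\di}{\di t}E_{nk}(t)+\int_{\Omega}\textrm{G}(\theta_{nk},\mathbb{T}_{nk}):\mathbb{T}_{nk}\,\di x=\int_{\Omega}f_{n}u_{nk,t}\,\di x+\int_{\Omega}\textrm{T}_{n}\bigl(\textrm{G}(\theta_{nk},\mathbb{T}_{nk}):\mathbb{T}_{nk}\bigr)\,\di x.
\]
Two structural facts close the argument. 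First, the monotonicity \eqref{monoton_G} together with $\textrm{G}(\theta,0)=0$ forces $\textrm{G}(\theta_{nk},\mathbb{T}_{nk}):\mathbb{T}_{nk}\geq0$ pointwise, so $\textrm{T}_{n}$ of this nonnegative quantity does not exceed the quantity itself; cancelling, one obtains the clean differential inequality $\tfrac{\di}{\di t}E_{nk}(t)\leq\int_{\Omega}f_{n}u_{nk,t}\,\di x$. This is precisely the reason the truncation $\textrm{T}_{n}$ was built into the approximate heat equation: without a sign control on the dissipative source it could not be absorbed here. Second, by the positivity $\theta_{nk}>0$ (Lemma \ref{positivity_of_theta} and the following remark) and the positive-definiteness of $\mathbb{C}^{-1}$, each of the three summands defining $E_{nk}$ is nonnegative, which gives $\norm{u_{nk,t}(t)}_{L^{2}(\Omega)}^{2}\leq 2E_{nk}(t)$, $\norm{\mathbb{T}_{nk}(t)}_{L^{2}(\Omega)}^{2}\leq C\,E_{nk}(t)$, $\norm{\theta_{nk}(t)}_{L^{1}(\Omega)}=\int_{\Omega}\theta_{nk}(t)\,\di x\leq E_{nk}(t)$, and also $\int_{\Omega}f_{n}u_{nk,t}\,\di x\leq\tfrac{1}{2}\norm{f_{n}}_{L^{2}(\Omega)}^{2}+E_{nk}(t)$.

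Gronwall's lemma then yields $E_{nk}(t)\leq\bigl(E_{nk}(0)+\tfrac{1}{2}\norm{f_{n}}_{L^{2}((0,\mathfrak{T})\times\Omega)}^{2}\bigr)e^{\mathfrak{T}}$ on $[0,\mathfrak{T}]$, and it remains only to observe that the right-hand side is bounded independently of $n,k$: $\norm{u_{1n}}_{L^{2}(\Omega)}\leq\norm{u_{1}}_{L^{2}(\Omega)}$ and $\int_{\Omega}\mathbb{C}^{-1}\mathbb{T}_{0k}:\mathbb{T}_{0k}\,\di x\leq C\norm{\mathbb{T}_{0}}_{L^{2}(\Omega)}^{2}$ by the orthogonal projection properties in \eqref{init_cond_nk_2}--\eqref{init_cond_nk_3}, $\int_{\Omega}\theta_{0n}\,\di x=\norm{\theta_{0n}}_{L^{1}(\Omega)}\to\norm{\theta_{0}}_{L^{1}(\Omega)}$ by \eqref{init_cond_nk_4}, and $\norm{f_{n}}_{L^{2}((0,\mathfrak{T})\times\Omega)}\to\norm{f}_{L^{2}((0,\mathfrak{T})\times\Omega)}$ by \eqref{Property_f_n}. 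Taking the supremum over $t\in[0,\mathfrak{T}]$ in the three bounds above gives the claim. Since this is a direct energy computation, there is no genuine analytic difficulty; the only points requiring attention are the admissibility of the multipliers $u_{nk,t}$, $\mathbb{T}_{nk}$, $\phi\equiv1$ in view of \eqref{reg_apprx}, and --- essentially --- the positivity of $\theta_{nk}$, which is what makes $\int_{\Omega}\theta_{nk}$ a nonnegative contribution to $E_{nk}$ and equal to $\norm{\theta_{nk}}_{L^{1}(\Omega)}$; thus Lemma \ref{positivity_of_theta} is already used here in a non-removable way.
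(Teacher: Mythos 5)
Your proof is correct and follows essentially the same energy‐balance route as the paper: test \eqref{approx_nk_1}, \eqref{approx_nk_2}, \eqref{approx_nk_3} with $u_{nk,t}$, $\mathbb{T}_{nk}$, $1$, observe the cancellations, use $\mathrm{T}_{n}(g)\leq g$ for $g=\mathrm{G}(\theta_{nk},\mathbb{T}_{nk}):\mathbb{T}_{nk}\geq 0$ to absorb the truncated source into the dissipation, and control the data terms via the projection bounds. The only cosmetic difference is that you close the estimate with Gronwall, whereas the paper uses Young's inequality and absorbs $\varepsilon\norm{u_{nk,t}}_{L^\infty(0,\mathfrak{T};L^2(\Omega))}^2$ after taking the essential supremum; both are equivalent here, and your explicit derivation of $\mathrm{G}(\theta,\mathbb{T}):\mathbb{T}\geq 0$ from \eqref{monoton_G} and $\mathrm{G}(\theta,0)=0$, and your remark on the non‑removable role of $\theta_{nk}>0$, are accurate clarifications consistent with the paper.
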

\begin{proof}
First, we derive the energy balance corresponding to the system \eqref{approx_nk_1}-\eqref{approx_nk_3}. To achieve this, we choose the test functions $\varphi=u_{nk,t}$, $\psi=\mathbb{T}_{nk}$, and $\phi=1$, insert them into the respective equations, and then sum up the resulting identities. This yields, for all $t\leq\mathfrak{T}$, the following relation:
\allowdisplaybreaks
\begin{align*}
    &\frac{1}{2} \int\limits_{\Omega}\abs{u_{nk,t}(t)}^{2}\di x
    +\frac{1}{2}\int\limits_{\Omega}\mathbb{C}^{-1}\mathbb{T}_{nk}(t):\mathbb{T}_{nk}(t)\,\di x
    +\int\limits_{\Omega}\theta_{nk}(t)\, \di x
    +\int\limits_{0}^{t}\int\limits_{\Omega}\textrm{G}(\theta_{nk},\mathbb{T}_{nk}):\mathbb{T}_{nk}\,\di x \di t
    \nonumber\\&
    =\frac{1}{2} \int\limits_{\Omega}\abs{u_{nk,t}(0)}^{2}\,\di x
    +\frac{1}{2}\int\limits_{\Omega}\mathbb{C}^{-1}\mathbb{T}_{nk}(0):\mathbb{T}_{nk}(0)\,\di x
    +\int\limits_{\Omega} \theta_{nk}(0)\,\di x\nonumber\\&\quad
    +\int\limits_{0}^{t}\int\limits_{\Omega}\textrm{T}_{n}\big(\textrm{G}(\theta_{nk},\mathbb{T}_{nk}):\mathbb{T}_{nk}\big)\,\di x \di t
    +\int\limits_{0}^{t}\int\limits_{\Omega}f_{n}u_{nk,t}\, \di x \di t\nonumber\\&
    =\frac{1}{2} \int\limits_{\Omega}\abs{P_{\mathcal{W}^{n}}u_{1}}^{2}\,\di x
    +\frac{1}{2}\int\limits_{\Omega}\mathbb{C}^{-1}P_{\mathcal{V}^{n}}\mathbb{T}_{0}:P_{\mathcal{V}^{n}}\mathbb{T}_{0}\,\di x
    +\int\limits_{\Omega} \theta_{0n}\, \di x \nonumber\\&\quad
    +\int\limits_{0}^{t}\int\limits_{\Omega}\textrm{T}_{n}\big(\textrm{G}(\theta_{nk},\mathbb{T}_{nk}):\mathbb{T}_{nk}\big)\,\di x \di t    
    +\int\limits_{0}^{t}\int\limits_{\Omega}f_{n}u_{nk,t} \,\di x \di t \nonumber\\&
    \leq
    \frac{1}{2} \int\limits_{\Omega}\abs{P_{\mathcal{W}^{n}}u_{1}}^{2}\,\di x
    +\frac{1}{2}\int\limits_{\Omega}\mathbb{C}^{-1}P_{\mathcal{V}^{n}}\mathbb{T}_{0}:P_{\mathcal{V}^{n}}\mathbb{T}_{0}\,\di x
    +\int\limits_{\Omega} \theta_{0n} \di x
    +\int\limits_{0}^{t}\int\limits_{\Omega}\textrm{G}(\theta_{nk},\mathbb{T}_{nk}):\mathbb{T}_{nk}\,\di x \di t\nonumber\\&\quad
    +C(\varepsilon)\norm{f_{n}}_{L^{2}((0,\mathfrak{T})\times\Omega)}^{2}+\varepsilon\norm{u_{nk,t}}_{L^{\infty}(0,\mathfrak{T};L^{2}(\Omega))}^{2}\nonumber\\&
    \leq 
    \frac{1}{2} \int\limits_{\Omega}\abs{u_{1}}^{2}\,\di x
    +\frac{1}{2}\int\limits_{\Omega}\mathbb{C}^{-1}\mathbb{T}_{0}:\mathbb{T}_{0}\,\di x
    +C\|\theta_{0}\|_{L^1(\Omega)}  +\int\limits_{0}^{t}\int\limits_{\Omega}\textrm{G}(\theta_{nk},\mathbb{T}_{nk}):\mathbb{T}_{nk}\,\di x \di t\nonumber\\&\quad
    +\tilde{C}(\varepsilon)\norm{f}_{L^{2}((0,\mathfrak{T})\times\Omega)}^{2}
    +\varepsilon\norm{u_{nk,t}}_{L^{\infty}(0,\mathfrak{T};L^{2}(\Omega))}^{2}
\end{align*}
for any $\varepsilon>0$, the constant $C>0$ is independent of both $k$ and $n$. We employ here the fact that the initial conditions for $u_{nk}$ and $\mathbb{T}_{nk}$ in the approximate system are given by orthogonal projections onto $\mathcal{W}^{n}$ and $\mathcal{V}^{n}$, respectively. Together with the convergence property \eqref{init_cond_nk_4} and \eqref{Property_f_n}, this yields - after taking the essential supremum - that the sequences are bounded by a constant independent of $k$ and even of $n$. This concludes the proof.
\end{proof} 
 \begin{remark}\label{Remark3.2}
Since $n\in\mathbb{N}$ is fixed, then the sequences
\begin{equation*}
\{\diver{ }u_{nk,t}\}_{k=1}^\infty\,, \qquad \big\{\mathrm{T}_{n}\big( \mathrm{G}(\theta_{nk},\mathbb{T}_{nk}):\mathbb{T}_{nk}\big)\big\}_{k=1}^\infty
\end{equation*}
are bounded in $L^{\infty}((0,\mathfrak{T})\times\Omega)$ with bounds independent of $k$ (but possibly depending on $n$).	Consequently, by the standard theory of parabolic equations, we obtain that
$$\norm{\theta_{nk}}_{L^{2}(0,\mathfrak{T};H^{1}(\Omega;\mathbb{R}))}\leq C(\mathfrak{T},n)\quad \textrm{and}\quad \|\theta_{nk,t}\|_{L^{2}(0,\mathfrak{T};L^{2}(\Omega;\mathbb{R}))}\leq C(\mathfrak{T},n)\,.$$        
\end{remark}        
We now turn to estimating the time derivatives of the sequences appearing in Proposition \ref{Prop_est_fixed_n}.  
  \begin{proposition}
Under the assumptions of Theorem \ref{TWIERDZENIE}, there exists a constant\\ $C(\mathfrak{T},n)>0$, independent of $k$, such that the following estimate is satisfied
\begin{align*}
        \norm{u_{nk,tt}}_{L^{2}((0,\mathfrak{T})\times\Omega)}
        +\norm{\mathbb{T}_{nk,t}}_{L^{2}((0,\mathfrak{T})\times\Omega)}
        \leq C(\mathfrak{T},n)\,.
\end{align*}
\end{proposition}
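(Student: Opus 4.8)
The plan is to run the same higher-order energy estimate already used above to establish the compactness of the operator $\mathcal{A}$, this time taking care that every constant produced is independent of $k$ (though it may depend on $n$). First I would test the approximate momentum equation \eqref{approx_nk_1} with $\varphi=u_{nk,tt}(t)$ and the approximate inelastic constitutive equation \eqref{approx_nk_2} with $\psi=\mathbb{T}_{nk,t}(t)$; both are admissible for almost every $t\in[0,\mathfrak{T}]$, because, by the Carath\'eodory construction underlying the fixed point, $u_{nk,t}$ is absolutely continuous in time, so that $u_{nk,tt}(t)\in\mathcal{W}^{n}$ and $\mathbb{T}_{nk,t}(t)\in\mathcal{V}^{k}$ wherever these derivatives exist. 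Adding the two resulting identities and integrating over $(0,\mathfrak{T})$ gives
\begin{align*}
&\norm{u_{nk,tt}}_{L^{2}((0,\mathfrak{T})\times\Omega)}^{2}
+\int\limits_{0}^{\mathfrak{T}}\int\limits_{\Omega}\mathbb{C}^{-1}\mathbb{T}_{nk,t}:\mathbb{T}_{nk,t}\,\di x \di t
=\int\limits_{0}^{\mathfrak{T}}\int\limits_{\Omega}f_{n}u_{nk,tt}\,\di x \di t
+\int\limits_{0}^{\mathfrak{T}}\int\limits_{\Omega}\theta_{nk}\diver{ }u_{nk,tt}\,\di x \di t\\
&\quad-\int\limits_{0}^{\mathfrak{T}}\int\limits_{\Omega}\mathbb{T}_{nk}:\varepsilon(u_{nk,tt})\,\di x \di t
-\int\limits_{0}^{\mathfrak{T}}\int\limits_{\Omega}\textrm{G}(\theta_{nk},\mathbb{T}_{nk}):\mathbb{T}_{nk,t}\,\di x \di t
+\int\limits_{0}^{\mathfrak{T}}\int\limits_{\Omega}\varepsilon(u_{nk,t}):\mathbb{T}_{nk,t}\,\di x \di t\,.
\end{align*}

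Next I would integrate by parts to remove the two second-order-in-time factors on the right: in time in $-\int_{0}^{\mathfrak{T}}\!\int_{\Omega}\mathbb{T}_{nk}:\varepsilon(u_{nk,tt})$, which becomes $\int_{0}^{\mathfrak{T}}\!\int_{\Omega}\mathbb{T}_{nk,t}:\varepsilon(u_{nk,t})-\int_{\Omega}\mathbb{T}_{nk}(\mathfrak{T}):\varepsilon(u_{nk,t}(\mathfrak{T}))+\int_{\Omega}\mathbb{T}_{nk}(0):\varepsilon(u_{nk,t}(0))$, and in space in $\int_{0}^{\mathfrak{T}}\!\int_{\Omega}\theta_{nk}\diver{ }u_{nk,tt}=-\int_{0}^{\mathfrak{T}}\!\int_{\Omega}\nabla\theta_{nk}\cdot u_{nk,tt}$, where the boundary term vanishes since $u_{nk,tt}(t)\in\mathcal{W}^{n}$ vanishes on $\partial\Omega$. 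After this, every factor on the right is under control: $\norm{f_{n}}_{L^{2}((0,\mathfrak{T})\times\Omega)}\leq C$ by \eqref{Property_f_n}; $\norm{\nabla\theta_{nk}}_{L^{2}((0,\mathfrak{T})\times\Omega)}\leq C(\mathfrak{T},n)$ by Remark~\ref{Remark3.2}; $\norm{\textrm{G}(\theta_{nk},\mathbb{T}_{nk})}_{L^{2}((0,\mathfrak{T})\times\Omega)}\leq C(\mathfrak{T})$ from the growth bound \eqref{bound_G} together with the bound on $\norm{\mathbb{T}_{nk}}_{L^{\infty}(0,\mathfrak{T};L^{2})}$ in Proposition~\ref{Prop_est_fixed_n}; and $\norm{\varepsilon(u_{nk,t})}_{L^{2}(0,\mathfrak{T};L^{2})}$, $\norm{\varepsilon(u_{nk,t})}_{L^{\infty}(0,\mathfrak{T};L^{2})}$ and $\norm{\mathbb{T}_{nk}}_{L^{\infty}(0,\mathfrak{T};L^{2})}$ are all $\leq C(\mathfrak{T},n)$ — the two involving $\varepsilon(u_{nk,t})$ because on the finite-dimensional space $\mathcal{W}^{n}$ all norms are equivalent, so $\norm{\varepsilon(u_{nk,t})(t)}_{L^{2}(\Omega)}\leq C(n)\norm{u_{nk,t}(t)}_{L^{2}(\Omega)}$, after which Proposition~\ref{Prop_est_fixed_n} applies. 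The endpoint traces are handled in the same spirit: $\mathbb{T}_{nk}(0)=P_{\mathcal{V}^{k}}\mathbb{T}_{0}$ is bounded in $L^{2}(\Omega)$ by $\norm{\mathbb{T}_{0}}_{L^{2}(\Omega)}$ and $\varepsilon(u_{nk,t}(0))=\varepsilon(P_{\mathcal{W}^{n}}u_{1})$ by $C(n)\norm{u_{1}}_{L^{2}(\Omega)}$.

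Finally I would apply Young's inequality to each product on the right-hand side, sending a small multiple $\varepsilon$ of $\norm{u_{nk,tt}}_{L^{2}((0,\mathfrak{T})\times\Omega)}^{2}$ and of $\norm{\mathbb{T}_{nk,t}}_{L^{2}((0,\mathfrak{T})\times\Omega)}^{2}$ to the left — the latter absorbed via the positive definiteness of $\mathbb{C}^{-1}$, which gives $c\,\norm{\mathbb{T}_{nk,t}}_{L^{2}((0,\mathfrak{T})\times\Omega)}^{2}\leq\int_{0}^{\mathfrak{T}}\!\int_{\Omega}\mathbb{C}^{-1}\mathbb{T}_{nk,t}:\mathbb{T}_{nk,t}$ — and choosing $\varepsilon$ small enough that the coefficients on the left stay positive; what remains is precisely the asserted bound with a constant $C(\mathfrak{T},n)$ independent of $k$. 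This estimate introduces no essential new difficulty beyond the one already met in the compactness proof for $\mathcal{A}$; the point that must be watched is that the right-hand side has to be closed using the $k$-uniform bounds of Proposition~\ref{Prop_est_fixed_n} and Remark~\ref{Remark3.2} rather than ad hoc finite-dimensional ones, and in particular the integration by parts in time in the cross term $\mathbb{T}_{nk}:\varepsilon(u_{nk,tt})$ produces the endpoint trace $\int_{\Omega}\mathbb{T}_{nk}(\mathfrak{T}):\varepsilon(u_{nk,t}(\mathfrak{T}))$, which forces the use of the $L^{\infty}(0,\mathfrak{T};L^{2})$ — not merely $L^{2}$-in-time — estimates of Proposition~\ref{Prop_est_fixed_n}.
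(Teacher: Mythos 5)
Your proposal is correct and follows essentially the same route as the paper: test with $\varphi=u_{nk,tt}$ and $\psi=\mathbb{T}_{nk,t}$, integrate by parts in time on the $\mathbb{T}_{nk}:\varepsilon(u_{nk,tt})$ cross term and in space on the $\theta_{nk}\diver u_{nk,tt}$ term, then close the estimate with Young's inequality and the positive definiteness of $\mathbb{C}^{-1}$, bounding the remaining factors via Proposition~\ref{Prop_est_fixed_n}, Remark~\ref{Remark3.2}, and the finite-dimensionality of $\mathcal{W}^{n}$. The minor differences (e.g.\ whether $\varepsilon(u_{nk,t})$ is placed in $L^{\infty}(0,\mathfrak{T};L^{2}(\Omega))$ or $L^{\infty}((0,\mathfrak{T})\times\Omega)$) are immaterial given the norm equivalence on $\mathcal{W}^{n}$.
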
       
\begin{proof} 
We take $\varphi = u_{nk,tt}$ and $\psi = \mathbb{T}_{nk,t}$ as test functions in \eqref{approx_nk_1} and \eqref{approx_nk_2}, respectively, integrate over $(0,\mathfrak{T})$, and then sum the resulting identities to obtain
\begin{align*}
	&\norm{u_{nk,tt}}_{L^{2}((0,\mathfrak{T})\times\Omega)}^{2}
    +\int\limits_{0}^{\mathfrak{T}}\int\limits_{\Omega} \mathbb{C}^{-1}\mathbb{T}_{nk,t}:\mathbb{T}_{nk,t}\, \di x \di t \nonumber \\ &
	=- \int\limits_{0}^{\mathfrak{T}}\int\limits_{\Omega}\mathbb{T}_{nk}:\varepsilon(u_{nk,tt})\,\di x \di t
	+\int\limits_{0}^{\mathfrak{T}}\int\limits_{\Omega}\theta_{nk}\diver{ } u_{nk,tt}\,\di x \di t
	+ \int\limits_{0}^{\mathfrak{T}}\int\limits_{\Omega} f_{n}u_{nk,tt} \,\di x \di t\\ &\quad
    -\int\limits_{0}^{\mathfrak{T}}\int\limits_{\Omega} \textrm{G}(\theta_{nk},\mathbb{T}_{nk}):\mathbb{T}_{nk,t}\, \di x \di t
    +\int\limits_{0}^{\mathfrak{T}}\int\limits_{\Omega} \varepsilon(u_{nk,t}):\mathbb{T}_{nk,t}\, \di x \di t\nonumber \,.
\end{align*}
Using integration by parts we have
\begin{align*}
    \int\limits_{0}^{\mathfrak{T}}\int\limits_{\Omega}\theta_{nk}\diver{ } u_{nk,tt}\,\di x \di t
    =-\int\limits_{0}^{\mathfrak{T}}\int\limits_{\Omega}\nabla\theta_{nk}u_{nk,tt}\,\di x \di t
\end{align*}
and
\begin{align*}
    &-\int\limits_{0}^{\mathfrak{T}}\int\limits_{\Omega}\mathbb{T}_{nk}:\varepsilon(u_{nk,tt})\,\di x \di t
    = \int\limits_{0}^{\mathfrak{T}}\int\limits_{\Omega}\mathbb{T}_{nk,t}:\varepsilon(u_{nk,t})\,\di x \di t
    -\int\limits_{\Omega}\mathbb{T}_{nk}(\mathfrak{T}):\varepsilon(u_{nk,t})(\mathfrak{T})\,\di x \nonumber\\&\quad
    +\int\limits_{\Omega}\mathbb{T}_{nk}(0):\varepsilon(u_{nk,t})(0)\,\di x
    \leq \norm{\mathbb{T}_{nk,t}}_{L^{2}((0,\mathfrak{T})\times\Omega)}\norm{\varepsilon(u_{nk,t})}_{L^{\infty}((0,\mathfrak{T})\times\Omega)} \nonumber\\&\quad
    +\norm{\mathbb{T}_{nk}}_{L^{\infty}(0,\mathfrak{T};L^{2}(\Omega))}\norm{\varepsilon(u_{nk,t})}_{L^{\infty}((0,\mathfrak{T})\times\Omega)}
    +\norm{P_{\mathcal{V}^{n}}\mathbb{T}_{0}}_{L^{2}(\Omega)}\norm{\varepsilon(P_{\mathcal{W}^{n}}u_{1})}_{L^{2}(\Omega)}
    \nonumber\\&
    \leq \norm{\mathbb{T}_{nk,t}}_{L^{2}((0,\mathfrak{T})\times\Omega)}\norm{\varepsilon(u_{nk,t})}_{L^{\infty}((0,\mathfrak{T})\times\Omega)} 
    +\norm{\mathbb{T}_{nk}}_{L^{\infty}(0,\mathfrak{T};L^{2}(\Omega))}\norm{\varepsilon(u_{nk,t})}_{L^{\infty}((0,\mathfrak{T})\times\Omega)} \nonumber\\&\quad
    +\norm{\mathbb{T}_{0}}_{L^{2}(\Omega)}\norm{\varepsilon(u_{1})}_{L^{2}(\Omega)}\,.
\end{align*}
Finally, we obtain
\begin{align*}
	&(1-\varepsilon_{1}-\varepsilon_{2})\norm{u_{nk,tt}}_{L^{2}((0,\mathfrak{T})\times\Omega)}^{2}
    +C(1-\varepsilon_{3}-\varepsilon_{4})\norm{\mathbb{T}_{nk,t}}_{L^{2}((0,\mathfrak{T})\times\Omega)}^{2}
    \nonumber \\ &
    \leq
    C(\varepsilon_{1})\norm{\nabla\theta_{nk}}_{L^{2}((0,\mathfrak{T})\times\Omega)}^{2}
    +C(\varepsilon_{2})\norm{f}_{L^{2}((0,\mathfrak{T})\times\Omega)}^{2}
    +C(\varepsilon_{3})\norm{\textrm{G}(\theta_{nk},\mathbb{T}_{nk})}_{L^{2}((0,\mathfrak{T})\times\Omega)}^{2} \nonumber \\ &\quad
    +C(\varepsilon_{4})\norm{\varepsilon(u_{nk,t})}_{L^{\infty}((0,\mathfrak{T})\times\Omega)}^{2}
	+\norm{\mathbb{T}_{nk}}_{L^{\infty}(0,\mathfrak{T};L^{2}(\Omega))}\norm{\varepsilon(u_{nk,t})}_{L^{\infty}((0,\mathfrak{T})\times\Omega)} \\&\quad
    +\norm{\mathbb{T}_{0}}_{L^{2}(\Omega)}\norm{\varepsilon(u_{1})}_{L^{2}(\Omega)}\nonumber\,,
\end{align*}
where $\varepsilon_{1}, \varepsilon_{2},\varepsilon_{3},\varepsilon_{4}>0$. Taking $1-\varepsilon_{1}-\varepsilon_{2}>0$ and $1-\varepsilon_{3}-\varepsilon_{4}>0$, and using the estimate from Proposition \ref{Prop_est_fixed_n} and Remark \ref{Remark3.2} together with the fact that $n$ is fixed, we complete the proof.
\end{proof}        
\subsection{Convergences for fixed $n$}
Having established the above estimates, we may pass to the limit with respect to $k$ for fixed $n$. As a result, we obtain a system of equations depending solely on the level $n$. In particular, the inelastic constitutive relation as well as the heat equation are then satisfied pointwise almost everywhere.\\
\begin{remark}\label{Remark_convergences_nk}
The uniform estimates with respect to $k\in\mathbb{N}$ derived in Subsection 3.1 ensure that, up to a subsequence, the following weak convergences hold.   
	\begin{align*}
	u_{nk,t}\underset{k\to\infty}{\rightharpoonup} u_{n,t}& \text{\quad in \quad}  L^{2}(0,\mathfrak{T};H_{0}^{1}(\Omega;\mathbb{R}^{3}))\,,\\
	u_{nk,tt}\underset{k\to\infty}{\rightharpoonup} u_{n,tt}& \text{\quad in \quad} L^{2}(0,\mathfrak{T};L^{2}(\Omega;\mathbb{R}^{3}))\,,\\
	\mathbb{T}_{nk}\underset{k\to\infty}{\rightharpoonup} \mathbb{T}_{n}& \text{\quad in \quad} L^{2}(0,\mathfrak{T};L^{2}(\Omega;\mathcal{S}^{3}))\,,\\
	\mathbb{T}_{nk,t}\underset{k\to\infty}{\rightharpoonup} \mathbb{T}_{n,t}& \text{\quad in \quad} L^{2}(0,\mathfrak{T};L^{2}(\Omega;\mathcal{S}^{3}))\,,\\
	\theta_{nk}\underset{k\to\infty}{\rightharpoonup} \theta_{n}& \text{\quad in \quad} L^{2}(0,\mathfrak{T};H^{1}(\Omega;\mathbb{R}))\,,\\
	\theta_{nk,t}\underset{k\to\infty}{\rightharpoonup} \theta_{n,t}& \text{\quad in \quad} L^{2}(0,\mathfrak{T};L^{2}(\Omega;\mathbb{R}))\,.
	\end{align*}
\end{remark}  
Below we observe that some of the above weak convergences in fact can be improved to strong ones in the appropriate spaces.
\begin{remark} \label{Remark_theta_nk_strong_convergence}
		From Aubin-Lions \cite{Roubicekbook} theorem, at least for a subsequence, the strong convergence holds
		\begin{align*}
		\theta_{nk}\underset{k\to\infty}{\to} \theta_{n}& \text{ \quad in \quad} L^{2}(0,\mathfrak{T};L^{2}(\Omega;\mathbb{R}))\,.
		\end{align*} 
\end{remark}
\begin{remark}\label{Remark_u_nk_strong_convergence}
Since $n\in\mathbb{N}$ is fixed, for almost every  $t\in[0,\mathfrak{T}]$ the vector $u_{nk,t}$ belongs to the finite-dimensional space $\mathcal{W}^{n}$. Consequently, the embeddings  $\mathcal{W}^{n} \hookrightarrow\hookrightarrow \mathcal{W}^{n} \hookrightarrow \mathcal{W}^{n} $ hold, where the first one is compact by the Bolzano-Weierstrass theorem. Hence, by the Aubin-Lions theorem, there exists a subsequence (not relabeled) such that the following strong convergence is satisfied
	\begin{align*}
		u_{nk,t}\underset{k\to\infty}{\to} u_{n,t} & \text{ \quad in \quad} L^{2}(0,\mathfrak{T};\mathcal{W}^{n})\,.
		\end{align*}
\end{remark}
\begin{remark}
		After a possible change of values on a set of measure zero, we obtain
		\begin{align*}
		u_{n,t}\in C([0,\mathfrak{T}];L^{2}(\Omega;\mathbb{R}^{3}))\,,\quad
		\mathbb{T}_{n}\in C([0,\mathfrak{T}];L^{2}(\Omega;\mathcal{S}^{3}))\,,\quad
        \theta_{n}\in C([0,\mathfrak{T}];L^{2}(\Omega;\mathbb{R}))\,.\\
		\end{align*}
\end{remark}
The next step is to pass to the limit as $k\to\infty$. The established weak and strong convergences allow us to handle all the linear terms in the system \eqref{approx_nk_1}-\eqref{approx_nk_3}. It remains, however, to analyze the nonlinear terms in \eqref{approx_nk_2}-\eqref{approx_nk_3} and identify their limits. In this analysis, we shall rely on calculations and results from the article \cite{OwczarekWielgos23}; due to the fixed level of approximation ($n$ being fixed), the passage to the limit with respect to $k$ is straightforward, and therefore the details are omitted here.	
\begin{proposition}
\label{weak_G(theta_k,T_k)}
Assume that the assumptions of Theorem~\ref{TWIERDZENIE} are satisfied and that the convergences stated in Remarks~\ref{Remark_convergences_nk}-\ref{Remark_u_nk_strong_convergence} hold. Then, the following weak convergence holds
\begin{align*}
	\mathrm{G}(\theta_{nk},\mathbb{T}_{nk})
	\underset{k\to\infty}{\rightharpoonup}
	\mathrm{G}(\theta_{n},\mathbb{T}_{n}) &\text{\quad in \quad} L^{2}(0,\mathfrak{T};L^{2}(\Omega;\mathcal{S}^{3}))\,.
	\end{align*}
\end{proposition}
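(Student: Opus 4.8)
The plan is to exploit the strong convergence $\theta_{nk}\to\theta_{n}$ in $L^{2}((0,\mathfrak{T})\times\Omega)$ (Remark~\ref{Remark_theta_nk_strong_convergence}) together with the monotonicity \eqref{monoton_G} of $\mathrm{G}$ in its second argument, via a Minty--Browder type argument. First, the growth bound \eqref{bound_G} and the uniform estimate on $\mathbb{T}_{nk}$ in $L^{\infty}(0,\mathfrak{T};L^{2}(\Omega))$ from Proposition~\ref{Prop_est_fixed_n} show that $\{\mathrm{G}(\theta_{nk},\mathbb{T}_{nk})\}_{k}$ is bounded in $L^{2}(0,\mathfrak{T};L^{2}(\Omega;\mathcal{S}^{3}))$, so along a subsequence $\mathrm{G}(\theta_{nk},\mathbb{T}_{nk})\rightharpoonup\chi$ there; it then suffices to identify $\chi=\mathrm{G}(\theta_{n},\mathbb{T}_{n})$, after which the entire sequence converges by uniqueness of the limit. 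Before doing so, I would pass to the limit in the linear terms of \eqref{approx_nk_2}, using the convergences of Remarks~\ref{Remark_convergences_nk}--\ref{Remark_u_nk_strong_convergence} and, crucially, $\varepsilon(u_{nk,t})\to\varepsilon(u_{n,t})$ strongly in $L^{2}$ (which holds because $u_{nk,t}$ lies in the fixed finite-dimensional space $\mathcal{W}^{n}$), to obtain the limiting constitutive relation $\mathbb{C}^{-1}\mathbb{T}_{n,t}+\chi=\varepsilon(u_{n,t})$ a.e.; moreover, since $\mathbb{T}_{nk}\rightharpoonup\mathbb{T}_{n}$ and $\mathbb{T}_{nk,t}\rightharpoonup\mathbb{T}_{n,t}$ in $L^{2}(0,\mathfrak{T};L^{2}(\Omega))$, the time traces converge weakly in $L^{2}(\Omega)$, so $\mathbb{T}_{nk}(0)=\mathbb{T}_{0k}\rightharpoonup\mathbb{T}_{n}(0)$, and since $\mathbb{T}_{0k}=P_{\mathcal{V}^{k}}\mathbb{T}_{0}\to\mathbb{T}_{0}$ strongly this forces $\mathbb{T}_{n}(0)=\mathbb{T}_{0}$ (and similarly $\mathbb{T}_{nk}(\mathfrak{T})\rightharpoonup\mathbb{T}_{n}(\mathfrak{T})$).

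The core of the proof is a $\limsup$ bound for the nonlinear energy term. Testing \eqref{approx_nk_2} with $\psi=\mathbb{T}_{nk}$ and integrating over $(0,\mathfrak{T})$ gives
\begin{align*}
\int_{0}^{\mathfrak{T}}\!\!\int_{\Omega}\mathrm{G}(\theta_{nk},\mathbb{T}_{nk}):\mathbb{T}_{nk}\,\di x\,\di t
&=\int_{0}^{\mathfrak{T}}\!\!\int_{\Omega}\varepsilon(u_{nk,t}):\mathbb{T}_{nk}\,\di x\,\di t
-\tfrac12\int_{\Omega}\mathbb{C}^{-1}\mathbb{T}_{nk}(\mathfrak{T}):\mathbb{T}_{nk}(\mathfrak{T})\,\di x\\
&\quad+\tfrac12\int_{\Omega}\mathbb{C}^{-1}\mathbb{T}_{0k}:\mathbb{T}_{0k}\,\di x.
\end{align*}
Letting $k\to\infty$, the strong convergence of $\varepsilon(u_{nk,t})$ and of $\mathbb{T}_{0k}$ against the weak convergence of $\mathbb{T}_{nk}$, together with the weak lower semicontinuity of $v\mapsto\int_{\Omega}\mathbb{C}^{-1}v:v$ at $t=\mathfrak{T}$ (positive definiteness of $\mathbb{C}^{-1}$), yield
\begin{align*}
\limsup_{k\to\infty}\int_{0}^{\mathfrak{T}}\!\!\int_{\Omega}\mathrm{G}(\theta_{nk},\mathbb{T}_{nk}):\mathbb{T}_{nk}\,\di x\,\di t
&\le\int_{0}^{\mathfrak{T}}\!\!\int_{\Omega}\varepsilon(u_{n,t}):\mathbb{T}_{n}\,\di x\,\di t
-\tfrac12\int_{\Omega}\mathbb{C}^{-1}\mathbb{T}_{n}(\mathfrak{T}):\mathbb{T}_{n}(\mathfrak{T})\,\di x\\
&\quad+\tfrac12\int_{\Omega}\mathbb{C}^{-1}\mathbb{T}_{0}:\mathbb{T}_{0}\,\di x
=\int_{0}^{\mathfrak{T}}\!\!\int_{\Omega}\chi:\mathbb{T}_{n}\,\di x\,\di t,
\end{align*}
where the last equality follows by testing the limiting law with $\mathbb{T}_{n}$ and using $\mathbb{T}_{n}(0)=\mathbb{T}_{0}$.

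Finally, for arbitrary $\eta\in L^{2}(0,\mathfrak{T};L^{2}(\Omega;\mathcal{S}^{3}))$ the monotonicity \eqref{monoton_G} gives $\int_{0}^{\mathfrak{T}}\!\int_{\Omega}(\mathrm{G}(\theta_{nk},\mathbb{T}_{nk})-\mathrm{G}(\theta_{nk},\eta)):(\mathbb{T}_{nk}-\eta)\,\di x\,\di t\ge0$; since $\theta_{nk}\to\theta_{n}$ a.e.\ along a further subsequence and $\mathrm{G}$ is continuous, the dominated convergence theorem (with dominating function $C_{\mathrm{G}}(1+|\eta|)$ from \eqref{bound_G}) gives $\mathrm{G}(\theta_{nk},\eta)\to\mathrm{G}(\theta_{n},\eta)$ strongly in $L^{2}$. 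Expanding that inequality, passing to the $\limsup$ and inserting the bound above, one gets $\int_{0}^{\mathfrak{T}}\!\int_{\Omega}(\chi-\mathrm{G}(\theta_{n},\eta)):(\mathbb{T}_{n}-\eta)\,\di x\,\di t\ge0$ for every $\eta$; taking $\eta=\mathbb{T}_{n}-\lambda\zeta$, dividing by $\lambda>0$ and letting $\lambda\to0^{+}$ (continuity of $\mathrm{G}$ and dominated convergence once more) yields $\int_{0}^{\mathfrak{T}}\!\int_{\Omega}(\chi-\mathrm{G}(\theta_{n},\mathbb{T}_{n})):\zeta\,\di x\,\di t\ge0$ for all $\zeta$, hence $\chi=\mathrm{G}(\theta_{n},\mathbb{T}_{n})$ a.e. I expect the main obstacle to be this $\limsup$ step: it is the only place where the nonlinearity has to be reconciled with the merely weak convergence of $\mathbb{T}_{nk}$, and it relies on the energy identity from testing \eqref{approx_nk_2} with $\mathbb{T}_{nk}$, the lower semicontinuity at the time endpoint, and the identification $\mathbb{T}_{n}(0)=\mathbb{T}_{0}$ --- for fixed $n$ the remaining details parallel those in \cite{OwczarekWielgos23}.
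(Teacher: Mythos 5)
Your proof is correct and follows the same overall Minty--Browder strategy as the paper, but you implement the crucial $\limsup$ step via a genuinely different route. The paper derives its inequality \eqref{limsupc_G(T):T_L1} by testing the momentum balance \eqref{approx_nk_1} with $u_{nk,t}$ \emph{and} the constitutive law \eqref{approx_nk_2} with $\mathbb{T}_{nk}$, then summing; this cancels the cross-term $\int\varepsilon(u_{nk,t}):\mathbb{T}_{nk}$ and replaces it with $\int\theta_{nk}\diver u_{nk,t}+\int f_{n}u_{nk,t}$, which pass to the limit by strong--weak pairing without ever requiring strong $L^2$ convergence of $\varepsilon(u_{nk,t})$. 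You instead test \emph{only} \eqref{approx_nk_2} with $\mathbb{T}_{nk}$ and handle the surviving term $\int\varepsilon(u_{nk,t}):\mathbb{T}_{nk}$ directly by exploiting that, at fixed $n$, $u_{nk,t}$ lives in the finite-dimensional space $\mathcal{W}^{n}$ and hence $\varepsilon(u_{nk,t})\to\varepsilon(u_{n,t})$ strongly. Both routes are valid here; yours is somewhat shorter and avoids invoking the momentum equation at all, but it leans on the finite-dimensionality of $\mathcal{W}^{n}$ in a way the paper's combined energy identity does not --- the latter would survive in a setting where $\varepsilon(u_{nk,t})$ converges only weakly. Your identification $\mathbb{T}_{n}(0)=\mathbb{T}_{0}$ via weak convergence of time traces, the use of weak lower semicontinuity of the quadratic form $v\mapsto\int_{\Omega}\mathbb{C}^{-1}v:v$ at $t=\mathfrak{T}$, the dominated-convergence argument giving $\mathrm{G}(\theta_{nk},\eta)\to\mathrm{G}(\theta_{n},\eta)$ strongly in $L^{2}$, and the final $\eta=\mathbb{T}_{n}-\lambda\zeta$ step are all sound and supply exactly the details the paper delegates to Proposition 4.13 of \cite{OwczarekWielgos23}.
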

\begin{proof} 
From property \eqref{bound_G} and the boundedness of 
$\{\mathbb{T}_{nk} \}_{k=1}^{\infty}$ in $L^{2}((0,\mathfrak{T})\times\Omega)$, we deduce that
 \begin{align*}
        \norm{\textrm{G}(\theta_{nk},\mathbb{T}_{nk})}_{L^{2}((0,\mathfrak{T})\times\Omega)}\leq C(\mathfrak{T})\,,
\end{align*}
so that $\textrm{G}(\theta_{nk},\mathbb{T}_{nk})\underset{k\to\infty}{\rightharpoonup}\gamma_{n}$ in $L^{2}((0,\mathfrak{T})\times\Omega)$. It remains to identify the weak limit. The argument follows the standard Minty method: we test the $nk$-system with $u_{nk,t}$ and $\mathbb{T}_{nk}$, compare it with the corresponding limit system tested by $u_{n,t}$ and $\mathbb{T}_{n}$, and then pass to the limit $k\to\infty$. Using weak semicontinuity of the $L^2$-norm and of the quadratic form induced by $\mathbb{C}^{-1}$, as well as the weak convergence $u_{nk,t}\underset{k\to\infty}{\rightharpoonup}u_{n,t}$ in $L^{2}(0,\mathfrak{T};H_{0}^{1}(\Omega))$, we deduce
\begin{equation}\label{limsupc_G(T):T_L1}
		\limsup_{k\to\infty}\int\limits_{0}^{\mathfrak{T}}\int\limits_{\Omega}\textrm{G}(\theta_{nk},\mathbb{T}_{nk}):\mathbb{T}_{nk} \,\di x\di t
		\leq\int\limits_{0}^{\mathfrak{T}}\int\limits_{\Omega}\gamma_{n}:\mathbb{T}_{n}\,\di x\di t\,,
\end{equation}
which provides the inequality required for Minty's trick. The detailed proof can be found in Proposition $4.13$ of \cite{OwczarekWielgos23}.
\end{proof}
Proposition \ref{weak_G(theta_k,T_k)} together with the convergences established in Remark \ref{convergence_nk} allows us to pass to the limit in equation \eqref{approx_nk_2} and obtain for a.e. $t\in [0,\mathfrak{T}]$
	\begin{align}\label{approx_n_2_0}
	&\int\limits_{\Omega} \mathbb{C}^{-1}\mathbb{T}_{n,t}:\psi\,\di x
	+\int\limits_{\Omega}\textrm{G}(\theta_{n}, \mathbb{T}_{n}):\psi\,\di x
	=\int\limits_{\Omega}\varepsilon(u_{n,t}):\psi\,\di x
	\end{align}
for all $\psi\in L^{2}(\Omega;\mathcal{S}^{3})$. What remains is to address the nonlinear terms related to the heat equation, which is the subject of the following propositions.
\begin{proposition}
 Assume that the assumptions of Theorem \ref{TWIERDZENIE} are satisfied and that the convergences stated in Remarks \ref{Remark_convergences_nk}-\ref{Remark_u_nk_strong_convergence} hold. Then we obtain the following convergence
\begin{align*}
    \theta_{nk}\diver{ }u_{nk,t}
	\underset{k\to\infty}{\to}
	\theta_{n}\diver{ }u_{n,t}
    &\text{\quad in \quad} L^{2}(0,\mathfrak{T};L^{2}(\Omega;\mathbb{R}))\,.
\end{align*}
\end{proposition}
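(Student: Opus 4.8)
The plan is to exploit that, for fixed $n$, the velocity $u_{nk,t}(t)$ takes values in the finite-dimensional space $\mathcal{W}^{n}$, on which all norms are equivalent; this is what allows the two available strong convergences (Remarks~\ref{Remark_theta_nk_strong_convergence} and \ref{Remark_u_nk_strong_convergence}) to be combined in an $L^{2}(L^{2})$ estimate. First I would record two consequences of finite-dimensionality. From the bound $\norm{u_{nk,t}}_{L^{\infty}(0,\mathfrak{T};L^{2}(\Omega))}\le C$ of Proposition~\ref{Prop_est_fixed_n} and equivalence of norms on $\mathcal{W}^{n}$,
\begin{align*}
\norm{\diver{ }u_{nk,t}}_{L^{\infty}((0,\mathfrak{T})\times\Omega)}\le C(n)\qquad\text{uniformly in }k\,,
\end{align*}
and from the strong convergence $u_{nk,t}\to u_{n,t}$ in $L^{2}(0,\mathfrak{T};\mathcal{W}^{n})$ of Remark~\ref{Remark_u_nk_strong_convergence}, again by norm equivalence,
\begin{align*}
\diver{ }u_{nk,t}\underset{k\to\infty}{\longrightarrow}\diver{ }u_{n,t}\qquad\text{in }L^{2}(0,\mathfrak{T};L^{\infty}(\Omega))\subset L^{2}((0,\mathfrak{T})\times\Omega)\,.
\end{align*}

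Next I would split
\begin{align*}
\theta_{nk}\diver{ }u_{nk,t}-\theta_{n}\diver{ }u_{n,t}
=(\theta_{nk}-\theta_{n})\,\diver{ }u_{nk,t}+\theta_{n}\big(\diver{ }u_{nk,t}-\diver{ }u_{n,t}\big)\,.
\end{align*}
The first term is handled by the uniform $L^{\infty}$-bound above together with the strong convergence $\theta_{nk}\to\theta_{n}$ in $L^{2}((0,\mathfrak{T})\times\Omega)$ from Remark~\ref{Remark_theta_nk_strong_convergence}, giving $\norm{(\theta_{nk}-\theta_{n})\diver{ }u_{nk,t}}_{L^{2}((0,\mathfrak{T})\times\Omega)}\le C(n)\norm{\theta_{nk}-\theta_{n}}_{L^{2}((0,\mathfrak{T})\times\Omega)}\to0$. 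For the second term I set $h_{k}:=\diver{ }u_{nk,t}-\diver{ }u_{n,t}$, which is bounded in $L^{\infty}((0,\mathfrak{T})\times\Omega)$ uniformly in $k$ and tends to $0$ in $L^{2}((0,\mathfrak{T})\times\Omega)$, while $\theta_{n}\in L^{2}((0,\mathfrak{T})\times\Omega)$ is fixed; splitting the space--time domain into $\{\abs{\theta_{n}}\le M\}$ and $\{\abs{\theta_{n}}>M\}$ yields
\begin{align*}
\norm{\theta_{n}h_{k}}_{L^{2}((0,\mathfrak{T})\times\Omega)}^{2}
\le M^{2}\norm{h_{k}}_{L^{2}((0,\mathfrak{T})\times\Omega)}^{2}
+C(n)^{2}\int\limits_{\{\abs{\theta_{n}}>M\}}\abs{\theta_{n}}^{2}\,\di x\di t\,,
\end{align*}
so that letting $k\to\infty$ and then $M\to\infty$ (absolute continuity of the integral, $\abs{\theta_{n}}^{2}$ being integrable) gives $\norm{\theta_{n}h_{k}}_{L^{2}((0,\mathfrak{T})\times\Omega)}\to0$. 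Summing the two bounds proves the claim. Alternatively, the uniform-in-$k$ estimates of Remark~\ref{Remark3.2} and weak-$\ast$ lower semicontinuity show $\theta_{n}\in L^{\infty}(0,\mathfrak{T};L^{2}(\Omega))$, and then one estimates the second term at once by $\norm{\theta_{n}}_{L^{\infty}(0,\mathfrak{T};L^{2})}\norm{h_{k}}_{L^{2}(0,\mathfrak{T};L^{\infty}(\Omega))}\to0$.

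I do not expect a genuine obstacle here. The only subtlety is that $\theta_{nk}$ converges strongly merely in $L^{2}$ in both variables, whereas the natural strong convergence of $\diver{ }u_{nk,t}$ is in $L^{2}$ in time with values in $L^{\infty}(\Omega)$; these mismatched scales are reconciled precisely by the uniform $L^{\infty}((0,\mathfrak{T})\times\Omega)$ control of $\diver{ }u_{nk,t}$, which is available only because $n$ is kept fixed and $\mathcal{W}^{n}$ is finite-dimensional — the very structural advantage that is lost in the later passage $n\to\infty$.
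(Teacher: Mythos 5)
Your proof is correct and takes essentially the same route as the paper, which simply cites the strong $L^2((0,\mathfrak{T})\times\Omega)$ convergences of $\theta_{nk}$ and $\diver u_{nk,t}$ from Remarks~\ref{Remark_theta_nk_strong_convergence} and~\ref{Remark_u_nk_strong_convergence} and leaves the product argument implicit. You make explicit the one ingredient the paper omits: strong $L^2$ convergence of two factors does not by itself give $L^2$ convergence of the product, and the uniform $L^\infty((0,\mathfrak{T})\times\Omega)$ bound on $\diver u_{nk,t}$ (already recorded in Remark~\ref{Remark3.2}, available because $\mathcal{W}^n$ is finite-dimensional with $n$ fixed) is what closes the gap via the decomposition $(\theta_{nk}-\theta_{n})\diver u_{nk,t}+\theta_{n}(\diver u_{nk,t}-\diver u_{n,t})$.
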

\begin{proof}
By Remarks \ref{Remark_theta_nk_strong_convergence} and \ref{Remark_u_nk_strong_convergence}, we know that 
\(\displaystyle\theta_{nk} \underset{k\to\infty}{\to} \theta_{n}\) and \(\displaystyle\operatorname{div} u_{nk} \underset{k\to\infty}{\to} \operatorname{div} u_{n}\) strongly converge in 
\(L^{2}([0,\mathfrak{T}]\times\Omega)\).
\end{proof}
\begin{remark}
Using the pointwise convergence of $\{\theta_{nk}\}_{k=1}^{\infty}$ (at least for a subsequence), which follows from Remark \ref{Remark_theta_nk_strong_convergence}, one shows that for every fixed $\mathbb{T}\in L^{2}([0,\mathfrak{T}]\times\Omega)$,
\begin{equation}
\label{strong_G(theta_nk)}
\mathrm{G}(\theta_{nk},\mathbb{T}) \underset{k\to\infty}{\to} \mathrm{G}(\theta_{n},\mathbb{T}) \quad \text{in } \quad L^{2}([0,\mathfrak{T}]\times\Omega)\,.
\end{equation}
Together with the strong convergence \eqref{strong_G(theta_nk)} and the weak convergence 
$\mathbb{T}_{nk}\rightharpoonup \mathbb{T}_{n}$ in $L^{2}(0,\mathfrak{T};L^{2}(\Omega))$, 
the monotonicity of $\mathrm{G}$ yields
\begin{equation}
\label{strong_in_L1}
0 \le \limsup_{k\to\infty}\int_{0}^{\mathfrak{T}}\!\!\int\limits_{\Omega}
\big(\mathrm{G}(\theta_{nk},\mathbb{T}_{nk})-\mathrm{G}(\theta_{nk},\mathbb{T}_{n})\big):(\mathbb{T}_{nk}-\mathbb{T}_{n})\,\di  x\di t \le 0\,.
\end{equation}
Consequently, by Proposition \ref{weak_G(theta_k,T_k)},
\begin{equation}
\label{strong_in_L1_1}
\lim_{k\to\infty}\int_{0}^{\mathfrak{T}}\!\!\int\limits_{\Omega}\mathrm{G}(\theta_{nk},\mathbb{T}_{nk}):\mathbb{T}_{nk}\,\di x\di t
=\int_{0}^{\mathfrak{T}}\!\!\int\limits_{\Omega}\mathrm{G}(\theta_{n},\mathbb{T}_{n}):\mathbb{T}_{n}\,\di x \di t\,.
\end{equation}
\end{remark}
The following theorem plays a crucial role in the passage to the limit as $k\to\infty$.  Although it represents an analogous result to Proposition 4.14 in \cite{OwczarekWielgos23} (see also \cite{RoubicekL1}), for the sake of clarity of presentation, we decided to briefly include its proof here.
\begin{proposition}
\label{strong_T_nk}Let the assumptions of Theorem \ref{TWIERDZENIE} be satisfied. Then
\begin{align*}
		\mathbb{T}_{nk}\underset{k\to\infty}{\to} \mathbb{T}_{n}& \text{\quad in \quad}  L^{\infty}(0,\mathfrak{T};L^{2}(\Omega;\mathcal{S}^{3}))\,.
\end{align*}
\end{proposition}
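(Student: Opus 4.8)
The plan is to test both inelastic constitutive equations with (a finite-dimensional projection of) the difference $\mathbb{T}_{nk}-\mathbb{T}_n$ and to extract from this a time-uniform energy estimate for that difference. The only genuinely delicate point is that \eqref{approx_nk_2} is available only for $\psi\in\mathcal{V}^k$, whereas the limit equation \eqref{approx_n_2_0} holds for all $\psi\in L^{2}(\Omega;\mathcal{S}^{3})$, and $\mathbb{T}_{nk}-\mathbb{T}_n$ need not lie in $\mathcal{V}^k$. I would therefore insert $\psi=\mathbb{T}_{nk}-P_{\mathcal{V}^k}\mathbb{T}_n\in\mathcal{V}^k$ into both \eqref{approx_nk_2} and \eqref{approx_n_2_0}, subtract, and write $\mathbb{T}_{nk}-P_{\mathcal{V}^k}\mathbb{T}_n=(\mathbb{T}_{nk}-\mathbb{T}_n)+E_k$ with $E_k:=\mathbb{T}_n-P_{\mathcal{V}^k}\mathbb{T}_n$. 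Since $\mathbb{T}_n\in C([0,\mathfrak{T}];L^{2}(\Omega;\mathcal{S}^{3}))$ (noted above), so that $\{\mathbb{T}_n(t):t\in[0,\mathfrak{T}]\}$ is compact in $L^{2}$, and since $P_{\mathcal{V}^k}$ is a fixed orthogonal projection with $\partial_t(P_{\mathcal{V}^k}\mathbb{T}_n)=P_{\mathcal{V}^k}\mathbb{T}_{n,t}$, one has $\supess_{t\in[0,\mathfrak{T}]}\norm{E_k(t)}_{L^{2}(\Omega)}\to0$ and $\norm{\partial_tE_k}_{L^{2}((0,\mathfrak{T})\times\Omega)}\to0$ as $k\to\infty$.

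In the subtracted identity, symmetry of $\mathbb{C}^{-1}$ turns the leading term into $\tfrac12\tfrac{\di}{\di t}\int_\Omega\mathbb{C}^{-1}(\mathbb{T}_{nk}-\mathbb{T}_n):(\mathbb{T}_{nk}-\mathbb{T}_n)\,\di x$ plus the remainder $\int_\Omega\mathbb{C}^{-1}(\mathbb{T}_{nk,t}-\mathbb{T}_{n,t}):E_k\,\di x$; after integration in time and one integration by parts in $t$, and using that $\mathbb{T}_{nk}-\mathbb{T}_n$ is bounded in $L^{\infty}(0,\mathfrak{T};L^{2}(\Omega))$, this remainder is $\lesssim\supess_t\norm{E_k(t)}_{L^{2}(\Omega)}+\norm{\partial_tE_k}_{L^{2}((0,\mathfrak{T})\times\Omega)}\to0$, uniformly in $t$. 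For the nonlinear contribution I decompose $\mathrm{G}(\theta_{nk},\mathbb{T}_{nk})-\mathrm{G}(\theta_n,\mathbb{T}_n)=\big(\mathrm{G}(\theta_{nk},\mathbb{T}_{nk})-\mathrm{G}(\theta_{nk},\mathbb{T}_n)\big)+\big(\mathrm{G}(\theta_{nk},\mathbb{T}_n)-\mathrm{G}(\theta_n,\mathbb{T}_n)\big)$: paired with $\mathbb{T}_{nk}-\mathbb{T}_n$, the first summand is nonnegative by monotonicity \eqref{monoton_G} and is kept on the left-hand side, while the second tends to $0$ in $L^{2}((0,\mathfrak{T})\times\Omega)$ by \eqref{strong_G(theta_nk)} and, multiplied by the $L^{2}$-bounded sequence $\mathbb{T}_{nk}-\mathbb{T}_n$, contributes $o(1)$ uniformly in $t$. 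The $E_k$-parts of the nonlinear term are $O\big(\norm{E_k}_{L^{2}((0,\mathfrak{T})\times\Omega)}\big)=o(1)$ by the growth bound \eqref{bound_G}, and the right-hand side $\int_0^t\int_\Omega\big(\varepsilon(u_{nk,t})-\varepsilon(u_{n,t})\big):(\mathbb{T}_{nk}-\mathbb{T}_n+E_k)\,\di x\,\di s$ is $o(1)$ uniformly in $t$, since $\varepsilon(u_{nk,t})\to\varepsilon(u_{n,t})$ strongly in $L^{2}((0,\mathfrak{T})\times\Omega)$ — all norms on the finite-dimensional $\mathcal{W}^n$ being equivalent, by Remark~\ref{Remark_u_nk_strong_convergence} — while the stress sequences stay bounded in $L^{2}$. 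Integrating on $(0,t)$ and collecting the bounds yields, uniformly in $t\in[0,\mathfrak{T}]$,
\begin{align*}
&\frac{1}{2}\int\limits_{\Omega}\mathbb{C}^{-1}\big(\mathbb{T}_{nk}(t)-\mathbb{T}_{n}(t)\big):\big(\mathbb{T}_{nk}(t)-\mathbb{T}_{n}(t)\big)\,\di x\\
&\hspace{4ex}\leq\frac{1}{2}\int\limits_{\Omega}\mathbb{C}^{-1}\big(\mathbb{T}_{nk}(0)-\mathbb{T}_{n}(0)\big):\big(\mathbb{T}_{nk}(0)-\mathbb{T}_{n}(0)\big)\,\di x+o(1).
\end{align*}
Since $\mathbb{T}_{nk}(0)=P_{\mathcal{V}^k}\mathbb{T}_0\to\mathbb{T}_0=\mathbb{T}_n(0)$ in $L^{2}(\Omega;\mathcal{S}^{3})$ — the identity $\mathbb{T}_n(0)=\mathbb{T}_0$ following because $\mathbb{T}_{nk}(0)$ converges both weakly to $\mathbb{T}_n(0)$ and strongly to $\mathbb{T}_0$ — the right-hand side tends to $0$, and positive-definiteness of $\mathbb{C}^{-1}$ gives $\supess_{t\in[0,\mathfrak{T}]}\norm{\mathbb{T}_{nk}(t)-\mathbb{T}_n(t)}_{L^{2}(\Omega)}^{2}\to0$, which is the assertion.

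An essentially equivalent route is the Minty/weak–lower-semicontinuity argument already underlying \eqref{strong_in_L1}--\eqref{strong_in_L1_1}: one tests \eqref{approx_nk_2} with $\mathbb{T}_{nk}$ and \eqref{approx_n_2_0} with $\mathbb{T}_n$ to obtain two energy identities, deduces $\int_0^t\int_\Omega\mathrm{G}(\theta_{nk},\mathbb{T}_{nk}):\mathbb{T}_{nk}\,\di x\,\di s\to\int_0^t\int_\Omega\mathrm{G}(\theta_n,\mathbb{T}_n):\mathbb{T}_n\,\di x\,\di s$ for each $t$ (the argument behind \eqref{strong_in_L1_1} being carried out $t$-wise), hence $\int_\Omega\mathbb{C}^{-1}\mathbb{T}_{nk}(t):\mathbb{T}_{nk}(t)\,\di x\to\int_\Omega\mathbb{C}^{-1}\mathbb{T}_n(t):\mathbb{T}_n(t)\,\di x$ pointwise in $t$, upgrades this to uniform convergence via the $k$-uniform Hölder-$\tfrac12$ continuity in $t$ supplied by the bound on $\mathbb{T}_{nk,t}$ in $L^{2}((0,\mathfrak{T})\times\Omega)$ established above, and combines it with $\mathbb{T}_{nk}(t)\rightharpoonup\mathbb{T}_n(t)$ in $L^{2}(\Omega;\mathcal{S}^{3})$ for every $t$. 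I expect the main obstacle to be exactly the passage from pointwise to \emph{uniform} convergence in $t$ demanded by the $L^{\infty}(0,\mathfrak{T};L^{2})$ statement: the individual limits are routine, but each error term must be dominated by a full-interval quantity tending to $0$ (or controlled through equicontinuity), which is where the uniform time-derivative bounds established above and the continuity-in-time of $\mathbb{T}_n$ genuinely enter; the structural feature that makes the nonlinear term controllable at all is the monotonicity \eqref{monoton_G} of $\mathrm{G}$.
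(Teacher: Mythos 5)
Your proposal is correct and follows essentially the same strategy as the paper: you test both constitutive equations with a $\mathcal{V}^{k}$-valued approximation of $\mathbb{T}_{nk}-\mathbb{T}_{n}$, form the energy identity for the difference, and use monotonicity of $\mathrm{G}$, the strong convergence of $u_{nk,t}$ in the finite-dimensional space $\mathcal{W}^{n}$, and the decay of the approximation error to push all remainders to zero uniformly in $t$. The only differences are cosmetic and in your favor: the paper invokes a generic density-based approximation $\zeta_{nk}\in\mathcal{V}^{k}$ of $\mathbb{T}_{n}$ while you use the concrete projection $P_{\mathcal{V}^{k}}\mathbb{T}_{n}$ (which commutes with $\partial_{t}$), you split the nonlinear term directly via monotonicity rather than routing through \eqref{strong_in_L1_1}, and you treat the uniform-in-$t$ control and the initial-data term (where the paper's brief ``the initial term equals zero'' should really read ``tends to zero'') more carefully.
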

\begin{proof}
Let $\zeta_{nk}\in L^{2}(0,\mathfrak{T};L^{2}(\Omega;\mathcal{S}^{3}))$ be an approximation of $\mathbb{T}_{n}$ with values in $\mathcal{V}^{k}$ such that
\begin{align}\label{new_pom_16}
		\zeta_{nk}	\underset{k\to\infty}{\to} \mathbb{T}_{n}
		& \text{\quad in \quad}  L^{2}(0,\mathfrak{T};L^{2}(\Omega;\mathcal{S}^{3}))\,.
\end{align}
Such an approximation exists due to the density of $\mathcal{V}^{k}$ in $L^{2}(0,\mathfrak{T};L^{2}(\Omega;\mathcal{S}^{3})$.

Choosing $\mathbf{v}=\mathbb{T}_{nk}-\zeta_{nk}$ in the equations \eqref{approx_nk_2} and \eqref{approx_n_2_0}, next subtracting them, we obtain the following energy identity for $\mathbb{T}_{nk}-\mathbb{T}_{n}$ (after the rearrangement of the terms)
\begin{align*}
		&\nonumber\int\limits_{\Omega} \mathbb{C}^{-1}\big(\mathbb{T}_{nk}(t)-\mathbb{T}_{n}(t)\big): \big(\mathbb{T}_{nk}(t)-\mathbb{T}_{n}(t)\big)\,\di x\\&\nonumber
		=
		-\int\limits_{0}^{t}\int\limits_{\Omega} \mathbb{C}^{-1}\big(\mathbb{T}_{nk,t}-\mathbb{T}_{n,t}\big): \big(\mathbb{T}_{n}-\zeta_{nk}\big)\,\di x\di t
		-\int\limits_{0}^{t}\int\limits_{\Omega}\textrm{G}(\theta_{nk},\mathbb{T}_{nk}):\mathbb{T}_{nk} \,\di x\di t\\&\nonumber\quad
		+\int\limits_{0}^{t}\int\limits_{\Omega}\textrm{G}(\theta_{nk},\mathbb{T}_{nk}):\zeta_{nk}\, \di x\di t
		+\int\limits_{0}^{t}\int\limits_{\Omega}\textrm{G}(\theta_{n},\mathbb{T}_{n}):\big(\mathbb{T}_{nk}-\zeta_{nk}\big)\, \di x\di t\\&\nonumber\quad
		+\int\limits_{0}^{t}\int\limits_{\Omega} \big(\varepsilon(u_{nk,t})-\varepsilon(u_{n,t})\big):\big(\mathbb{T}_{nk}-\mathbb{T}_{n}\big)\,\di x\di t\\&\nonumber\quad
		+\int\limits_{0}^{t}\int\limits_{\Omega} \big(\varepsilon(u_{nk,t})-\varepsilon(u_{n,t})\big):\big(\mathbb{T}_{n}-\zeta_{nk}\big)\,\di x\di t\\&\quad
		+	\int\limits_{\Omega} \mathbb{C}^{-1}\big(\mathbb{T}_{nk}(0)-\mathbb{T}_{n}(0)\big): \big(\mathbb{T}_{nk}(0)-\mathbb{T}_{n}(0)\big)\di x\,.
\end{align*}
Passing to the limit in each term: the contributions involving $\mathbb{T}_{nk,t}-\mathbb{T}_{n,t}$ vanish by weak convergence in $L^{2}([0,\mathfrak{T}]\times\Omega)$ and \eqref{new_pom_16}. Terms with $\textrm{G}(\cdot,\cdot)$ converge due to \eqref{strong_in_L1_1} and weak-strong convergence arguments. The parts with $\varepsilon(u_{nk,t})-\varepsilon(u_{n,t})$ vanish since $\displaystyle u_{nk,t} \underset{k\to\infty}{\to} u_{n,t}$ strongly in $L^{2}(0,\mathfrak{T};\mathcal{W}^{n})$. Eventually, the initial term equals zero, which completes the proof.
\end{proof}
Proposition \ref{strong_T_nk}, together with Lebesgue's dominated convergence theorem, yields the following result, which is the last proposition before passing to the limit with respect to $k$.
\begin{proposition}
Assume the assumptions of Theorem \ref{TWIERDZENIE}. Then the following convergence holds:
  \begin{align*}
    \mathrm{T}_{n}\big(\mathrm{G}(\theta_{nk},\mathbb{T}_{nk}):\mathbb{T}_{nk}\big)
	\underset{k\to\infty}{\to}
	\mathrm{T}_{n}\big(\mathrm{G}(\theta_{n},\mathbb{T}_{n}):\mathbb{T}_{n}\big) &\text{\quad in \quad} L^{2}(0,\mathfrak{T};L^{2}(\Omega;\mathbb{R}))\,.
\end{align*}
\end{proposition}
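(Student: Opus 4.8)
The plan is to combine the strong convergence $\mathbb{T}_{nk}\to\mathbb{T}_{n}$ in $L^{\infty}(0,\mathfrak{T};L^{2}(\Omega;\mathcal{S}^{3}))$ from Proposition~\ref{strong_T_nk} with the strong convergence $\theta_{nk}\to\theta_{n}$ in $L^{2}(0,\mathfrak{T};L^{2}(\Omega;\mathbb{R}))$ from Remark~\ref{Remark_theta_nk_strong_convergence}, and then to conclude by Lebesgue's dominated convergence theorem, the essential point being that the truncation operator $\mathrm{T}_{n}$ renders the whole sequence uniformly bounded by the constant $n$.

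First I would fix an arbitrary subsequence of $\{k\}$ and extract from it a further subsequence (not relabelled) along which $\mathbb{T}_{nk}\to\mathbb{T}_{n}$ and $\theta_{nk}\to\theta_{n}$ hold simultaneously pointwise almost everywhere in $(0,\mathfrak{T})\times\Omega$; this is possible because strong $L^{2}$-convergence always admits an a.e.\ convergent subsequence, and the second extraction is performed on the subsequence produced by the first. By the assumed continuity of $\mathrm{G}$ in both arguments, this yields $\mathrm{G}(\theta_{nk},\mathbb{T}_{nk})\to\mathrm{G}(\theta_{n},\mathbb{T}_{n})$ a.e., hence $\mathrm{G}(\theta_{nk},\mathbb{T}_{nk}):\mathbb{T}_{nk}\to\mathrm{G}(\theta_{n},\mathbb{T}_{n}):\mathbb{T}_{n}$ a.e., and by continuity of $\mathrm{T}_{n}$ also $\mathrm{T}_{n}\big(\mathrm{G}(\theta_{nk},\mathbb{T}_{nk}):\mathbb{T}_{nk}\big)\to\mathrm{T}_{n}\big(\mathrm{G}(\theta_{n},\mathbb{T}_{n}):\mathbb{T}_{n}\big)$ pointwise a.e.

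Next I would invoke domination: since $|\mathrm{T}_{n}(r)|\leq n$ for every $r\in\mathbb{R}$, the sequence $\big\{\mathrm{T}_{n}\big(\mathrm{G}(\theta_{nk},\mathbb{T}_{nk}):\mathbb{T}_{nk}\big)\big\}_{k}$ is bounded in modulus by the constant function $n$, which belongs to $L^{2}((0,\mathfrak{T})\times\Omega)$ because $\Omega$ is bounded and $\mathfrak{T}<\infty$. Lebesgue's dominated convergence theorem then gives strong convergence in $L^{2}((0,\mathfrak{T})\times\Omega)=L^{2}(0,\mathfrak{T};L^{2}(\Omega;\mathbb{R}))$ along this subsequence. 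Since the limit $\mathrm{T}_{n}\big(\mathrm{G}(\theta_{n},\mathbb{T}_{n}):\mathbb{T}_{n}\big)$ does not depend on the chosen subsequence, the standard ``subsequence of every subsequence'' argument upgrades the convergence to that of the full sequence $\{k\}$, which is precisely the claim.

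The argument is essentially routine once Proposition~\ref{strong_T_nk} is available; the only points requiring a little care are the simultaneous extraction of a common a.e.\ convergent subsequence for both $\theta_{nk}$ and $\mathbb{T}_{nk}$ in the first step, and the uniqueness-of-limit argument removing the passage to a subsequence at the end. The truncation $\mathrm{T}_{n}$ is exactly what trivialises the domination step, so no integrability estimate on the untruncated product $\mathrm{G}(\theta_{n},\mathbb{T}_{n}):\mathbb{T}_{n}$ — which in general is only known in $L^{1}$ — is needed here.
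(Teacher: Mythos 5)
Your proof is correct and takes essentially the same route the paper indicates: the paper prefaces this proposition with the remark that it follows from Proposition~\ref{strong_T_nk} together with Lebesgue's dominated convergence theorem, and you have simply spelled out the standard details (a.e.~convergent subsequences from the strong $L^2$ convergences of $\theta_{nk}$ and $\mathbb{T}_{nk}$, continuity of $\mathrm{G}$ and $\mathrm{T}_n$, domination by the constant $n$, and the subsequence-of-subsequences argument).
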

We are now in a position to pass to the limit as $k\to\infty$ with fixed $n\in\mathbb{N}$. Using the convergences established above, we obtain for a.e. $t\in[0,\mathfrak{T}]$ the following system of equations, satisfied 
\begin{align}\label{approx_n_1}
	&\int\limits_{\Omega} u_{n,tt}\varphi\,\di x 
	+\int\limits_{\Omega}\mathbb{T}_{n}:\varepsilon(\varphi)\,\di x 
	-\int\limits_{\Omega}\theta_{n}\diver\varphi\,\di x  
	= \int\limits_{\Omega} f_{n}\varphi\,\di x\,,
	\end{align}
	\begin{align}\label{approx_n_2}
	&\int\limits_{\Omega} \mathbb{C}^{-1}\mathbb{T}_{n,t}:\psi\,\di x
	+\int\limits_{\Omega}\textrm{G}(\theta_{n}, \mathbb{T}_{n}):\psi\,\di x
	=\int\limits_{\Omega}\varepsilon(u_{n,t}):\psi\,\di x\,,	
	\end{align}
	\begin{align}\label{approx_n_3}
	&\int\limits_{\Omega}\theta_{n,t}\phi\,\di x
	+\int\limits_{\Omega}\nabla\theta_{n} \nabla \phi\,\di x
	+\int\limits_{\Omega}\theta_{n}\diver{u_{n,t}}\phi \,\di x
	=\int\limits_{\Omega}\textrm{T}_{n}\big( \textrm{G}(\theta_{n}, \mathbb{T}_{n}):\mathbb{T}_{n}\big)\phi\,\di x
\end{align}
for $\varphi\in\mathcal{W}^{n}$, $\psi\in L^{2}(\Omega;\mathcal{S}^{3})$ and $\phi\in H^{1}(\Omega;\mathbb{R})$.

We observe that, thanks to the regularity properties established above and the standard regularity theory of weak solutions for parabolic equations, the equations \eqref{approx_n_2} and \eqref{approx_n_3} are satisfied for a.e. $(t,x)\in[0,\mathfrak{T}]\times\Omega$. Moreover, from Lemma \ref{positivity_of_theta}, we have $\theta_{n}(t,x)>0$ for a.e. $x\in\Omega$ and all $t\in[0,\mathfrak{T}]$. Finally, in view of Remark 3.7 from article \cite{OwczarekWielgos23}, the following additional observation holds.
\begin{remark}
    After a possible change of values on a set of measure zero, we obtain
    \begin{align*}
    u_{n}\in C([0,\mathfrak{T}];H_{0}^{1}(\Omega;\mathbb{R}^{3}))\,.
    \end{align*}
\end{remark}
\section{Convergence Analysis and Proof of the Main Theorem}
In this section, we consider the system at the level $n$. We first reformulate the energy equation as an entropy equation, which provides the necessary information to derive the total energy dissipation inequality. Building on this, we establish uniform estimates for the relevant sequences and justify the passage to the limit in the associated Bochner spaces. These steps allow us to complete the proof of the main theorem of the article.
    
\subsection{Rewriting the Energy Equation as an Entropy Equation in the Approximation}
We transform the energy equation \eqref{approx_n_3} in the approximate system into the entropy equation. Since, for the system depending only on $n$, we have $\theta_{n}(t,x)>0$ for almost every $x\in \Omega$ and all $t\in[0,\mathfrak{T}]$, we may divide the pointwise energy equation by $\theta_{n}$ to obtain the following corresponding entropy formulation
\begin{align} \label{entropy_for_n}
	\big(\ln\theta_{n}+\diver{u}_{n}\big)_{t}
	-\Delta\ln\theta_{n}
    = \frac{\textrm{T}_{n}\big(\textrm{G}(\theta_{n},\mathbb{T}_{n}):\mathbb{T}_{n}\big)}{\theta_{n}}
    +\abs{\nabla\ln\theta_{n}}^{2}\,.
\end{align}
We define $\tau_{n}=\ln\theta_{n}$. Multiplying equation \eqref{entropy_for_n} by a test function $\phi\in C_{0}^{\infty}([0,\mathfrak{T})\times\Omega)$ and integrating over yields
\begin{align}
  \label{entropy_for_n_1}
		-\int\limits_{0}^{\mathfrak{T}}\int\limits_{\Omega}(\tau_n+&\diver{u_n})\phi_{t}\,\di x\di t
		+\int\limits_{0}^{\mathfrak{T}}\int\limits_{\Omega}\nabla\tau_n \nabla \phi\,\di x\di t+\int\limits_{\Omega} (\tau_{0n}+\diver{u_{0n}})\phi(0,x)\,\di x\nn\\
        &
	=\int\limits_{0}^{\mathfrak{T}}\int\limits_{\Omega} e^{-\tau_n}\textrm{T}_{n}\big(\textrm{G}(e^{\tau_n},\mathbb{T}_n):\mathbb{T}_n\big)\phi\,\di x\di t
        + \int\limits_{0}^{\mathfrak{T}}\int\limits_{\Omega}\abs{\nabla\ln\theta_{n}}^{2}\phi\,\di x\di t\,.
\end{align}
In the subsequent subsections, we focus primarily on the entropy equation \eqref{entropy_for_n} rather than the energy equation \eqref{approx_n_3}. Our ultimate goal is to pass to the limit as $n\rightarrow\infty$ in equation \eqref{entropy_for_n_1}. To rigorously justify this limit passage, suitable bounds for the sequences appearing in \eqref{entropy_for_n_1} are required. These bounds will be derived in the following subsections.
\subsection{Estimates with respect to $n$}	

\begin{proposition}\label{Prop_estimates}
Assume the assumptions of Theorem \ref{TWIERDZENIE}. Then there exists a constant $C(\mathfrak{T})>0$, independent of $n$, such that the following uniform estimate holds:
\begin{align*}
		&\norm{u_{n,t}}_{L^{\infty}(0,\mathfrak{T};L^{2}(\Omega;\mathbb{R}^{3}))}+
        \norm{\mathbb{T}_{n}}_{L^{\infty}(0,\mathfrak{T};L^{2}(\Omega;\mathcal{S}^{3}))}+
        \norm{\theta_{n}}_{L^{\infty}(0,\mathfrak{T};L^{1}(\Omega;\mathbb{R}))}\nonumber\\[1ex]&
        +
        \norm{\tau_{n}}_{L^{\infty}(0,\mathfrak{T};L^{1}(\Omega;\mathbb{R}))}
        +
        \norm{\nabla\tau_{n}}_{L^{2}(0,\mathfrak{T};L^{2}(\Omega;\mathbb{R}^{3}))}\nonumber\\[1ex]&
        \quad+\norm{e^{-\tau_{n}}\mathrm{T}_{n}(\mathrm{G}(e^{\tau_{n}},\mathbb{T}_{n}):\mathbb{T}_{n})}_{L^{1}(0,\mathfrak{T};L^{1}(\Omega;\mathbb{R}))}
        \leq C(\mathfrak{T})\,.
\end{align*}
\end{proposition}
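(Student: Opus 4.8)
The plan is to close the estimate by combining two identities at level $n$: the total energy balance of the system \eqref{approx_n_1}--\eqref{approx_n_3} and the spatially integrated entropy identity obtained from \eqref{approx_n_3} after division by $\theta_{n}$. Neither identity alone suffices, because the logarithm forces information to be transferred between them.

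First I would derive the energy balance. Testing \eqref{approx_n_1} with $\varphi=u_{n,t}$, \eqref{approx_n_2} with $\psi=\mathbb{T}_{n}$ and \eqref{approx_n_3} with $\phi\equiv1$ and summing, the elastic work terms $\pm\int_{\Omega}\mathbb{T}_{n}:\varepsilon(u_{n,t})$ cancel, the thermomechanical couplings $\pm\int_{\Omega}\theta_{n}\diver u_{n,t}$ cancel, and $\int_{\Omega}\nabla\theta_{n}\cdot\nabla\phi=0$ because $\phi\equiv1$; there remains, for a.e.\ $t$,
\begin{align*}
\frac{\di}{\di t}\Bigl(\tfrac12\norm{u_{n,t}(t)}_{L^{2}}^{2}&+\tfrac12\!\int_{\Omega}\!\mathbb{C}^{-1}\mathbb{T}_{n}:\mathbb{T}_{n}+\!\int_{\Omega}\!\theta_{n}\Bigr)\\&+\int_{\Omega}\!\bigl(\textrm{G}(\theta_{n},\mathbb{T}_{n}):\mathbb{T}_{n}-\textrm{T}_{n}(\textrm{G}(\theta_{n},\mathbb{T}_{n}):\mathbb{T}_{n})\bigr)=\int_{\Omega}\!f_{n}u_{n,t}.
\end{align*}
Since $\textrm{G}:\mathbb{T}\geq0$, one has $0\leq\textrm{T}_{n}(s)\leq s$ for $s\geq0$, so the second term on the left is nonnegative; using $\theta_{n}>0$ and the positive definiteness of $\mathbb{C}^{-1}$, I would integrate in time, estimate $\int_{0}^{t}\!\int_{\Omega}f_{n}u_{n,t}\leq C\norm{f}_{L^{2}L^{2}}^{2}+\varepsilon\norm{u_{n,t}}_{L^{\infty}L^{2}}^{2}$, and absorb the last term after taking the supremum over $t$. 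Because the approximate initial data are orthogonal projections of $u_{1},\mathbb{T}_{0}$ and $\theta_{0n}\to\theta_{0}$ in $L^{1}$, $f_{n}\to f$ in $L^{2}L^{2}$, all bounded uniformly in $n$, this gives the uniform bounds on $\norm{u_{n,t}}_{L^{\infty}L^{2}}$, $\norm{\mathbb{T}_{n}}_{L^{\infty}L^{2}}$ and $\norm{\theta_{n}}_{L^{\infty}L^{1}}$ (recall $\norm{\theta_{n}(t)}_{L^{1}}=\int_{\Omega}\theta_{n}(t)$ since $\theta_{n}>0$).

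Next I would treat the entropy. From $\ln s\leq s-1$ for $s>0$ I obtain the pointwise bound $\tau_{n}^{+}(t)\leq\theta_{n}(t)$, hence $\int_{\Omega}\tau_{n}(t)\leq\int_{\Omega}\tau_{n}^{+}(t)\leq\int_{\Omega}\theta_{n}(t)\leq C$ for a.e.\ $t$, uniformly in $n$ — this is the crucial source of an \emph{upper} bound on the entropy. For a production identity, I would test \eqref{approx_n_3} with $\phi=1/\theta_{n}$, which is admissible because for fixed $n$ the solution satisfies $\theta_{n}\geq\delta_{n}>0$ (Lemma \ref{positivity_of_theta} with $a=\diver u_{n,t}\in L^{\infty}$) and $\theta_{n}(t)\in H^{1}(\Omega)$ for a.e.\ $t$, so $1/\theta_{n}(t)\in H^{1}(\Omega)$. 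This yields $\frac{\di}{\di t}\int_{\Omega}\tau_{n}=\int_{\Omega}\abs{\nabla\tau_{n}}^{2}+\int_{\Omega}e^{-\tau_{n}}\textrm{T}_{n}(\textrm{G}(e^{\tau_{n}},\mathbb{T}_{n}):\mathbb{T}_{n})$ — the coupling term $\int_{\Omega}\theta_{n}\diver u_{n,t}\cdot\theta_{n}^{-1}=\int_{\Omega}\diver u_{n,t}=0$ since $u_{n,t}(t)\in\mathcal{W}^{n}\subset H_{0}^{1}$ — and, integrating in time,
\[
\int_{\Omega}\tau_{n}(t)-\int_{\Omega}\tau_{0n}=\int_{0}^{t}\!\int_{\Omega}\abs{\nabla\tau_{n}}^{2}+\int_{0}^{t}\!\int_{\Omega}e^{-\tau_{n}}\textrm{T}_{n}(\textrm{G}(e^{\tau_{n}},\mathbb{T}_{n}):\mathbb{T}_{n}).
\]
Both right-hand terms are nonnegative; the left-hand side is bounded above by the previous step and bounded below by $-\norm{\ln\theta_{0n}}_{L^{1}}$ once the initial approximation is chosen so that $\ln\theta_{0n}$ is bounded in $L^{1}$ (possible since $\ln\theta_{0}\in L^{1}$: truncate $\theta_{0}$ away from $0$ and $\infty$ and mollify). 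Hence $\norm{\nabla\tau_{n}}_{L^{2}((0,\mathfrak{T})\times\Omega)}$ and $\norm{e^{-\tau_{n}}\textrm{T}_{n}(\textrm{G}(e^{\tau_{n}},\mathbb{T}_{n}):\mathbb{T}_{n})}_{L^{1}((0,\mathfrak{T})\times\Omega)}$ are uniformly bounded, and finally $\int_{\Omega}\abs{\tau_{n}(t)}=\int_{\Omega}\tau_{n}^{+}(t)+\bigl(\int_{\Omega}\tau_{n}^{+}(t)-\int_{\Omega}\tau_{n}(t)\bigr)$ is uniformly bounded by the two displays above, giving $\norm{\tau_{n}}_{L^{\infty}L^{1}}\leq C$.

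I expect the main obstacle to be precisely this coupling: the entropy identity controls only the increase of $\int_{\Omega}\tau_{n}(t)$ relative to its initial value, so obtaining a two-sided bound on the entropy requires importing the energy bound on $\int_{\Omega}\theta_{n}(t)$ through the concavity of the logarithm, while at the same time one must ensure that the approximate initial datum $\theta_{0n}$ keeps $\ln\theta_{0n}$ bounded in $L^{1}$. The truncation $\textrm{T}_{n}$ and the thermodynamic sign condition $\textrm{G}:\mathbb{T}\geq0$ are exactly what place every production term on the favorable side of both identities, so that no uncontrolled nonlinearity appears.
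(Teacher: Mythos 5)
Your proposal is correct and follows essentially the same path as the paper's proof. The paper tests \eqref{approx_n_1}--\eqref{approx_n_3} with $(u_{n,t},\mathbb{T}_n,1)$, subtracts the spatially integrated entropy equation \eqref{entropy_for_n} from the resulting energy balance in a single step, and then bounds the quantity $\int_{\Omega}(\theta_n-\tau_n)$, from which both $L^{\infty}L^{1}$ bounds on $\theta_n$ and $\tau_n$ follow by the convexity of $s\mapsto s-\ln s$; your version instead carries out the same cancellation $\mathrm{T}_n(s)\le s$ in the energy identity, exploits $\ln s\le s-1$ to transfer the $L^1$ bound on $\theta_n$ into an upper bound on $\int\tau_n^+$, and then reads off the lower bound on $\int\tau_n$ together with the dissipation bounds from the entropy identity itself. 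The two arrangements are algebraically equivalent. You also correctly identify that $\phi=1/\theta_n$ is an admissible test function thanks to the strict positivity $\theta_n\ge\delta_n>0$ from Lemma \ref{positivity_of_theta} and the $H^1$-regularity of $\theta_n$, which matches the paper's pointwise-division argument; and you flag the need to choose $\theta_{0n}$ so that $\ln\theta_{0n}$ stays bounded in $L^1$, a point the paper uses implicitly in \eqref{estimate_n}.
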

\begin{proof}
First, we derive the energy balance for the system \eqref{approx_n_1}-\eqref{approx_n_3} by choosing $\varphi=u_{n,t}$, $\psi=\mathbb{T}_{n}$ and $\phi=1$, and summing the resulting equations. Next, we integrate the entropy equation \eqref{entropy_for_n} over $\Omega$ and subtract it from the energy balance, which yields, for  $t\leq\mathfrak{T}$

\begin{align}
\label{estimate_n}
    &\frac{1}{2} \int\limits_{\Omega}\abs{u_{n,t}(t)}^{2}\di x
    +\frac{1}{2}\int\limits_{\Omega}\mathbb{C}^{-1}\mathbb{T}_{n}(t):\mathbb{T}_{n}(t)\,\di x
    +\int\limits_{\Omega} \big(\theta_{n}(t)-\tau_{n}(t)\big)\, \di x\nonumber\\&\quad
    +\int\limits_{0}^{t}\int\limits_{\Omega}\textrm{G}(\theta_{n},\mathbb{T}_{n}):\mathbb{T}_{n}\,\di x \di t
    +\int\limits_{0}^{t}\int\limits_{\Omega}e^{-\tau_{n}}\textrm{T}_{n}(\textrm{G}(e^{\tau_{n}},\mathbb{T}_{n}):\mathbb{T}_{n})\,\di x \di t
    +\int\limits_{0}^{t}\int\limits_{\Omega} \abs{\nabla\tau_{n}}^{2} \,\di x \di t\nonumber\\&
    =\frac{1}{2} \int\limits_{\Omega}\abs{u_{n,t}(0)}^{2}\di x
    +\frac{1}{2}\int\limits_{\Omega}\mathbb{C}^{-1}\mathbb{T}_{n}(0):\mathbb{T}_{n}(0)\,\di x
    +\int\limits_{\Omega} \big(\theta_{n}(0)-\tau_{n}(0)\big) \di x\nonumber\\&\quad
    +\int\limits_{0}^{t}\int\limits_{\Omega}\textrm{T}_{n}\big(\textrm{G}(\theta_{n},\mathbb{T}_{n}):\mathbb{T}_{n}\big)\,\di x \di t
    +\int\limits_{0}^{t}\int\limits_{\Omega}f_{n}u_{n,t} \di x \di t\nonumber\\&
    \leq
    \frac{1}{2} \int\limits_{\Omega}\abs{P_{\mathcal{W}^{n}}u_{1}}^{2}\,\di x
    +\frac{1}{2}\int\limits_{\Omega}\mathbb{C}^{-1}P_{\mathcal{V}^{n}}\mathbb{T}_{0}:P_{\mathcal{V}^{n}}\mathbb{T}_{0}\di x
    +\int\limits_{\Omega} \big(\theta_{0n}-\ln\theta_{0n}\big)\, \di x\nonumber\\&\quad
    +\int\limits_{0}^{t}\int\limits_{\Omega}\textrm{G}(\theta_{n},\mathbb{T}_{n}):\mathbb{T}_{n}\,\di x \di t
    +C(\varepsilon)\norm{f_{n}}_{L^{2}((0,\mathfrak{T})\times\Omega)}^{2}+\varepsilon\norm{u_{n,t}}_{L^{\infty}(0,\mathfrak{T};L^{2}(\Omega))}^{2}\nonumber\\&
    \leq 
    \frac{1}{2} \int\limits_{\Omega}\abs{u_{1}}^{2}\di x
    +\frac{1}{2}\int\limits_{\Omega}\mathbb{C}^{-1}\mathbb{T}_{0}:\mathbb{T}_{0}\,\di x
    +\int\limits_{\Omega} \big(\theta_{0}-\ln\theta_{0}\big)\, \di x\\&\quad
    +\int\limits_{0}^{t}\int\limits_{\Omega}\textrm{G}(\theta_{n},\mathbb{T}_{n}):\mathbb{T}_{n}\,\di x \di t
    +\tilde{C}(\varepsilon)\norm{f}_{L^{2}((0,\mathfrak{T})\times\Omega)}^{2}+\varepsilon\norm{u_{n,t}}_{L^{\infty}(0,\mathfrak{T};L^{2}(\Omega))}^{2}\nonumber
\end{align}
for $\varepsilon>0$. In \eqref{estimate_n}, we have used that the initial conditions for $u_{n}$ and $\mathbb{T}_{n}$ in the approximate system are the orthogonal projections onto $\mathcal{W}^{n}$ and $\mathcal{V}^{n}$, respectively, along with the convergences \eqref{init_cond_nk_1} and \eqref{init_cond_nk_4}. Moreover, we observe that the fourth and fifth terms on the left-hand side of \eqref{estimate_n} is nonnegative for almost every $(t,x)\in [0,\mathfrak{T}]\times\Omega$. Taking the essential supremum in \eqref{estimate_n}, we conclude that the sequences appearing in the statement of the proposition are bounded by a constant independent of $n$, which completes the proof.
\end{proof} 
\begin{remark}\label{Remark_G(theta_n,T_n)_boundedness}
From the boundedness of the sequence \(\{\mathbb{T}_{n}\}_{n=1}^{\infty}\) in 
\(L^{2}([0,\mathfrak{T}]\times\Omega)\) obtained in Proposition \ref{Prop_estimates}  and the property \eqref{bound_G}, 
it follows that there exists a constant $C(\mathfrak{T})>0$ such that
\[
\|\mathrm{G}(\theta_{n},\mathbb{T}_{n})\|_{L^{2}(0,\mathfrak{T};L^{2}(\Omega;\mathcal{S}^{3}))} \le C(\mathfrak{T})\,.
\]
\end{remark}    
\begin{proposition}\label{Prop_tau_nt_boundedness}
		Let us assume assumptions of Theorem \ref{TWIERDZENIE}. Then, there exists a constant $C(\mathfrak{T})>0$ independent of $n$ such that the following estimate is satisfied
        \begin{align*}
        \norm{\tau_{n,t}}_{L^{1}(0,\mathfrak{T};(W^{1,s}(\Omega;\mathbb{R})^{*})}\leq C(\mathfrak{T})\,,
        \end{align*}
        where $s>3$.
\end{proposition}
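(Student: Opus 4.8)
The plan is to read the evolution identity for $\tau_{n,t}$ directly off the pointwise entropy equation \eqref{entropy_for_n} and to estimate it by duality against test functions in $W^{1,s}(\Omega)$. The two ingredients that make this work in three space dimensions are the Sobolev embedding $W^{1,s}(\Omega)\hookrightarrow C(\overline{\Omega})\hookrightarrow L^{\infty}(\Omega)$ for $s>3$, which gives in particular $L^{1}(\Omega)\hookrightarrow(W^{1,s}(\Omega))^{*}$ with $\norm{h}_{(W^{1,s}(\Omega))^{*}}\leq C\norm{h}_{L^{1}(\Omega)}$, and the elementary bound $\norm{\nabla\phi}_{L^{2}(\Omega)}\leq C\norm{\phi}_{W^{1,s}(\Omega)}$, valid since $\Omega$ is bounded and $s>2$.

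Since $u_{n,t}\in L^{2}(0,\mathfrak{T};H_{0}^{1}(\Omega;\mathbb{R}^{3}))$ by Remark \ref{Remark_convergences_nk}, the term $(\diver{u_{n}})_{t}$ in \eqref{entropy_for_n} is the genuine function $\diver{u_{n,t}}\in L^{2}((0,\mathfrak{T})\times\Omega)$, and \eqref{entropy_for_n} rearranges to
\[
\tau_{n,t}=-\diver{u_{n,t}}+\Delta\tau_{n}+e^{-\tau_{n}}\textrm{T}_{n}\bigl(\textrm{G}(e^{\tau_{n}},\mathbb{T}_{n}):\mathbb{T}_{n}\bigr)+\abs{\nabla\tau_{n}}^{2}\,.
\]
Pairing this, for almost every $t\in[0,\mathfrak{T}]$, with $\phi\in W^{1,s}(\Omega)$ and using $\langle\Delta\tau_{n},\phi\rangle=-\int_{\Omega}\nabla\tau_{n}\cdot\nabla\phi\,\di x$ and $\langle\diver{u_{n,t}},\phi\rangle=-\int_{\Omega}u_{n,t}\cdot\nabla\phi\,\di x$, the first two pairings are controlled by Cauchy--Schwarz together with $\norm{\nabla\phi}_{L^{2}}\leq C\norm{\phi}_{W^{1,s}}$, and the last two by H\"older's inequality together with $\norm{\phi}_{L^{\infty}}\leq C\norm{\phi}_{W^{1,s}}$. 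Taking the supremum over $\norm{\phi}_{W^{1,s}(\Omega)}\leq1$ yields, for a.e.\ $t$,
\[
\norm{\tau_{n,t}(t)}_{(W^{1,s}(\Omega))^{*}}\leq C\Bigl(\norm{u_{n,t}(t)}_{L^{2}}+\norm{\nabla\tau_{n}(t)}_{L^{2}}+\norm{\nabla\tau_{n}(t)}_{L^{2}}^{2}+\norm{e^{-\tau_{n}}\textrm{T}_{n}(\textrm{G}(e^{\tau_{n}},\mathbb{T}_{n}):\mathbb{T}_{n})(t)}_{L^{1}(\Omega)}\Bigr)\,,
\]
which also shows that $\tau_{n,t}$, a priori only a distribution, agrees a.e.\ in time with a $(W^{1,s}(\Omega))^{*}$-valued function.

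It then remains to integrate over $[0,\mathfrak{T}]$ and to note that each term on the right is controlled uniformly in $n$ by Proposition \ref{Prop_estimates}: $\norm{u_{n,t}}_{L^{2}}$ is bounded in $L^{\infty}(0,\mathfrak{T})$, $\norm{\nabla\tau_{n}}_{L^{2}}$ in $L^{2}(0,\mathfrak{T})$, and both $\norm{\nabla\tau_{n}}_{L^{2}}^{2}$ and $\norm{e^{-\tau_{n}}\textrm{T}_{n}(\textrm{G}(e^{\tau_{n}},\mathbb{T}_{n}):\mathbb{T}_{n})}_{L^{1}(\Omega)}$ in $L^{1}(0,\mathfrak{T})$. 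This gives $\norm{\tau_{n,t}}_{L^{1}(0,\mathfrak{T};(W^{1,s}(\Omega))^{*})}\leq C(\mathfrak{T})$.

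The only genuinely delicate point is the exponent restriction $s>3$: the dissipation terms $\abs{\nabla\tau_{n}}^{2}$ and $e^{-\tau_{n}}\textrm{T}_{n}(\textrm{G}(e^{\tau_{n}},\mathbb{T}_{n}):\mathbb{T}_{n})$ are bounded only in $L^{1}$ in space \emph{and} in time --- the usual feature of renormalized/entropy-type solutions --- so a test space embedded in $L^{\infty}(\Omega)$ is needed to absorb them, and correspondingly no time integrability for $\tau_{n,t}$ better than $L^{1}$ can be expected. Note also that only the $L^{2}(0,\mathfrak{T};L^{2}(\Omega))$ bound on $\nabla\tau_{n}$ and the $L^{\infty}(0,\mathfrak{T};L^{2}(\Omega))$ bound on $u_{n,t}$ enter, never a gradient bound on $u_{n,t}$, which is consistent with the absence of a uniform-in-$n$ control of $\nabla u_{n,t}$.
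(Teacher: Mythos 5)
Your proof is correct and takes essentially the same route as the paper's: both read off the pointwise entropy equation \eqref{entropy_for_n}, place $\Delta\tau_n$ and $\diver u_{n,t}$ in a negative Sobolev space via integration by parts against $\nabla\phi$ (using the uniform $L^2$ bounds on $\nabla\tau_n$ and $u_{n,t}$), place the two dissipation terms in $L^1(\Omega)$, and invoke the chain $W^{1,s}(\Omega)\hookrightarrow L^\infty(\Omega)$ for $s>3$ so that $L^1(\Omega)\hookrightarrow(W^{1,s}(\Omega))^*$. Your write-up merely makes the duality pairing explicit where the paper argues embedding-by-embedding, and your closing remark that no uniform-in-$n$ gradient bound on $u_{n,t}$ is ever needed is consistent with the paper's use of $\diver u_{n,t}\in L^2(0,\mathfrak{T};H^{-1}(\Omega))$.
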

\begin{proof}
Let us write again a pointwise entropy equation 
\begin{align*}
	\tau_{n,t}=
    \Delta\tau_{n}  
    +\abs{\nabla\tau_{n}}^{2}
    -\diver{u}_{n,t}
	+e^{-\tau_{n}}\textrm{T}_{n}\big(\textrm{G}(e^{\tau_{n}},\mathbb{T}_{n}):\mathbb{T}_{n}\big)\,.
\end{align*}
From Proposition \ref{Prop_estimates}, we know that $\nabla \tau_{n}$ is uniformly bounded in $L^{2}(0,\mathfrak{T};L^{2}(\Omega;\mathbb{R}^{3}))$, which, when interpreted in the weak sense, implies that $\Delta \tau_{n}$ is uniformly bounded in $L^{2}(0,\mathfrak{T};(W^{1,2}(\Omega;\mathbb{R}))^{*})$. In the same manner, the weak interpretation of the divergence shows that $\diver u_{n,t}$ is bounded in $L^{2}(0,\mathfrak{T};H^{-1}(\Omega;\mathbb{R})^{})$ by a constant independent of $n$. Furthermore, Proposition \ref{Prop_estimates} also yields the uniform boundedness of $\abs{\nabla\tau_{n}}^{2}$ in $L^{1}((0,\mathfrak{T})\times\Omega)$ as well as of  $\displaystyle e^{-\tau_{n}}\textrm{T}_{n}\big(\textrm{G}(e^{\tau_{n}},\mathbb{T}_{n}):\mathbb{T}_{n}\big)$ in $L^{1}((0,\mathfrak{T})\times\Omega)$. Using the Sobolev embedding theorem,
$$W^{1,s}(\Omega;\mathbb{R})\hookrightarrow C^{0,\lambda}(\Omega;\mathbb{R})\hookrightarrow C^{0}(\Omega;\mathbb{R})\hookrightarrow L^{\infty}(\Omega;\mathbb{R})\hookrightarrow (L^{1}(\Omega;\mathbb{R}))^{*}$$
for $s>3$ and $0\leq\lambda\leq 1-\frac{3}{s}$, hence
$$L^{1}(\Omega;\mathbb{R})\hookrightarrow (L^{1}(\Omega;\mathbb{R}))^{**}\hookrightarrow (W^{1,s}(\Omega;\mathbb{R}))^{*}\,.$$
In consequence, the sequence $\tau_{n,t}$ is bounded in $L^{1}(0,\mathfrak{T};(W^{1,s}(\Omega;\mathbb{R}))^{*})$ for $s>3$, with bounds independent of $n$. 
\end{proof}
\subsection{Convergence Properties of the $n$-th Approximation}
The uniform estimates with respect to $n\in\mathbb{N}$ from Propositions \ref{Prop_estimates} allow us to extract suitable subsequences in the relevant Bochner spaces. In particular, the following weak convergences hold:
\begin{align}
\label{convergenc_with_n_1}
	u_{n,t}\overset{*}{\underset{n\to\infty}{\rightharpoonup}} u_{t}& \text{\quad in \quad}  L^{\infty}(0,\mathfrak{T};L^{2}(\Omega;\mathbb{R}^{3}))\,,\\
    \label{convergenc_with_n_2}
	\mathbb{T}_{n}\overset{*}{\underset{n\to\infty}{\rightharpoonup}} \mathbb{T}& \text{\quad in \quad} L^{\infty}(0,\mathfrak{T};L^{2}(\Omega;\mathcal{S}^{3}))\,,\\
    \label{convergenc_with_n_3}
	\theta_{n}\overset{*}{\underset{n\to\infty}{\rightharpoonup}}\theta& \text{\quad in \quad} L^{\infty}(0,\mathfrak{T};\mathcal{M}^{+}(\overline{\Omega};\mathbb{R}))\,.
\end{align}
Moreover, to justify the passage to the limit in the system \eqref{approx_n_1}-\eqref{approx_n_3} together with \eqref{entropy_for_n_1}, additional convergence results for the nonlinear terms are required.
\begin{remark}\label{Remark_tau_n_convergence}
        From Proposition \ref{Prop_estimates}, combined with the Gagliardo-Nirenberg interpolation inequality, we obtain
        \begin{align*}
            \norm{\tau_{n}}_{L^{2}(\Omega)}\leq C_{GN}\norm{\nabla\tau_{n}}_{L^{2}(\Omega)}^{\frac{3}{5}}
            \norm{\tau_{n}}_{L^{1}(\Omega)}^{\frac{2}{5}}\,.
        \end{align*}
Consequently, by H\"older's inequality
\begin{align*}
            \norm{\tau_{n}}_{L^{2}([0,\mathfrak{T}]\times\Omega)}^{2}
            &\leq 
            C_{GN}^{2}\int\limits_{0}^{\mathfrak{T}}\norm{\nabla\tau_{n}}_{L^{2}(\Omega)}^{\frac{6}{5}}
            \norm{\tau_{n}}_{L^{1}(\Omega)}^{\frac{4}{5}}\di t \nonumber\\&
            \leq C_{GN}^{2} \mathfrak{T}^{\frac{2}{5}}\norm{\tau_{n}}_{L^{\infty}(0,\mathfrak{T};L^{1}(\Omega))}^{\frac{4}{5}}\norm{\nabla\tau_{n}}_{L^{2}(0,\mathfrak{T};L^{2}(\Omega))}^{\frac{6}{5}}\,.
\end{align*}
Hence, $\tau_{n}$ is uniformly bounded in $L^{2}(0,\mathfrak{T};H^{1}(\Omega;\mathbb{R}))$. In particular, there exists 
$\tau \in L^{2}(0,\mathfrak{T};H^{1}(\Omega;\mathbb{R}))$ such that, up to a subsequence,
\begin{align}
\label{weak_tau_n}
		\tau_{n}\underset{n\to\infty}{\rightharpoonup} \tau & \text{ \quad in \quad} L^{2}(0,\mathfrak{T};H^{1}(\Omega;\mathbb{R}))\,.
	\end{align}
\end{remark}
From Proposition \ref{Prop_tau_nt_boundedness}, Remark \ref{Remark_tau_n_convergence} and the Aubin-Lions theorem we have 
$$
    \{ \tau_{n}\in L^{2}(0,\mathfrak{T};H^{1}(\Omega;\mathbb{R})), \tau_{n,t} \in L^{1}(0,\mathfrak{T};(W^{1,s}(\Omega;\mathbb{R}))^{*}), s>3 \} \hookrightarrow\hookrightarrow L^{2}(0,\mathfrak{T};L^{2}(\Omega;\mathbb{R}))\,.
$$
 Consequently, strong convergence, at least for a subsequence, holds:
\begin{align}
\label{strong_tau_n}
        \tau_{n}\underset{n\to\infty}{\to} \tau & \text{ \quad in \quad} L^{2}(0,\mathfrak{T};L^{2}(\Omega;\mathbb{R}))\,.
\end{align}
Moreover, for a.e. $(t,x)\in [0,\mathfrak{T}]\times\Omega$, at least for a subsequence, pointwise convergence holds:
\begin{align*}
       \tau_{n}(t,x)\underset{n\to\infty}{\to} \tau(t,x)\,,
\end{align*}
    which implies
\begin{align}\label{Conv_theta_n_pointwise}
        \theta_{n}(t,x)=e^{\tau_{n}}(t,x)\underset{n\to\infty}{\to} e^{\tau}(t,x)\,.
\end{align}
By the argument in \cite{CieslakMuha_3D}, which relies on Egorov's theorem, the measure $\di \theta$ admits the following decomposition. Since $\displaystyle\theta_n \underset{n\to\infty}{\to} e^\tau$ almost everywhere, Egorov's theorem guarantees that for every $\varepsilon>0$ there exists a set $A_\varepsilon \subset (0,\mathfrak{T}) \times \Omega$ such that $|((0,\mathfrak{T}) \times \Omega) \setminus A_\varepsilon| < \varepsilon$ and $\displaystyle\theta_n \underset{n\to\infty}{\to} e^\tau$ uniformly on $A_\varepsilon$. This allows us to identify the continuous part of the measure:
    \begin{align*}
        \di \theta = e^\tau \di x\di t \quad \text{on \quad} A_\varepsilon\,.
    \end{align*}
By letting $\varepsilon \to 0$, we conclude that the continuous part satisfies
    \begin{align*}
        \di \theta = e^\tau \di x\di t \quad \text{a.e. in \quad} (0,\mathfrak{T}) \times \Omega\,.
    \end{align*}
However, Egorov's theorem only provides uniform convergence on sets of positive Lebesgue measure, so this argument does not rule out the presence of a singular part $g \ge 0$ of the measure $\theta$, which may be supported on sets of measure zero. Consequently by boundedness of $\theta_n$ in $L^{\infty}(0,\mathfrak{T};L^{1}(\Omega;\mathbb{R}))$, we have the decomposition
    \begin{align*}
        \di \theta(t) = e^\tau(t) \di x + g(t) \quad \text{a.e. in \quad} (0,\mathfrak{T}) \,,
    \end{align*}
where $g(t)$ is the singular part in $\Omega$ for almost all $t\in (0,\mathfrak{T})$, and in general, we cannot conclude that $\di \theta (t) = e^\tau(t) \di x$.

From Proposition \ref{Prop_estimates} and Remark \ref{Remark_G(theta_n,T_n)_boundedness}, we obtain the following results.
\begin{proposition}
\label{Remark_G_convergence}
Let the assumptions of Theorem \ref{TWIERDZENIE} be satisfied. Then the following weak convergence holds:
 \begin{align*}
	\textrm{G}(\theta_{n},\mathbb{T}_{n})
	\underset{n\to\infty}{\rightharpoonup}
	\gamma &\text{\quad in \quad} L^{2}(0,\mathfrak{T};L^{2}(\Omega;\mathcal{S}^{3}))\,,
	\end{align*}
where
 \begin{align*}
            \gamma=\int\limits_{\mathbb{R_+}\times\mathbb{R}^{6}}\textrm{G}(\lambda_{1},\lambda_{2})\,\di \nu_{(t,x)}(\lambda_{1},\lambda_{2})
            =\int\limits_{\mathbb{R}^{6}}\textrm{G}(e^{\tau},\lambda_{2})\,\di \mu_{(t,x)}(\lambda_{2})\,.
\end{align*}
Here, $\nu_{(t,x)}:(0,\mathfrak{T})\times\Omega\to\mathbb{R_+}\times\mathbb{R}^{6}$ is a Young measure generated by the sequence $\{(\theta_{n},\mathbb{T}_{n})\}_{n=1}^{\infty}$, and $\mu_{(t,x)}:(0,\mathfrak{T})\times\Omega\to\mathbb{R}^{6}$ is a Young measure generated by the sequence $\{\mathbb{T}_{n}\}_{n=1}^{\infty}$, with
\begin{align*}
    \nu_{(t,x)} = \delta_{e^{\tau}(t,x)} \otimes \mu_{(t,x)}\,.
\end{align*}
\end{proposition}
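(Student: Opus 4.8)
The plan is to identify the weak $L^{2}$-limit of $\mathrm{G}(\theta_{n},\mathbb{T}_{n})$ by means of the Young measure generated by the pair $(\theta_{n},\mathbb{T}_{n})$, exploiting the fact that the first component converges pointwise while the second may only oscillate. First I would record, from the growth bound \eqref{bound_G} together with the uniform estimate $\norm{\mathbb{T}_{n}}_{L^{2}((0,\mathfrak{T})\times\Omega)}\le C(\mathfrak{T})$ of Proposition~\ref{Prop_estimates} (cf.\ Remark~\ref{Remark_G(theta_n,T_n)_boundedness}), that $\{\mathrm{G}(\theta_{n},\mathbb{T}_{n})\}_{n=1}^{\infty}$ is bounded in $L^{2}((0,\mathfrak{T})\times\Omega;\mathcal{S}^{3})$; hence, along a (not relabelled) subsequence, $\mathrm{G}(\theta_{n},\mathbb{T}_{n})\rightharpoonup\gamma$ weakly in $L^{2}$. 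The whole content of the proposition is then the \emph{identification} of $\gamma$.

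Next I would invoke the fundamental theorem on Young measures. Passing to a further subsequence, $\{\mathbb{T}_{n}\}$ generates a Young measure $\mu_{(t,x)}$ on $\mathbb{R}^{6}$, and the $L^{2}$-bound forces $\mu_{(t,x)}$ to have a finite second moment for a.e.\ $(t,x)$. The structural point is that $\theta_{n}=e^{\tau_{n}}$ converges \emph{pointwise a.e.} to $e^{\tau}$, which is precisely \eqref{Conv_theta_n_pointwise}, obtained after extracting a subsequence from the strong convergence \eqref{strong_tau_n}. Since one component of the pair converges in measure, the joint Young measure splits as a tensor product, so that $\{(\theta_{n},\mathbb{T}_{n})\}$ generates
\[
\nu_{(t,x)}=\delta_{e^{\tau}(t,x)}\otimes\mu_{(t,x)}\qquad\text{on }\;\mathbb{R}_{+}\times\mathbb{R}^{6}.
\]

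With $\nu_{(t,x)}$ at hand I would apply the representation theorem for weak limits of superpositions to the continuous integrand $\mathrm{G}$. Because $\{\mathrm{G}(\theta_{n},\mathbb{T}_{n})\}$ is bounded in $L^{2}$ — hence equi-integrable in $L^{1}$, so that no concentration can occur — the fundamental theorem yields that $\mathrm{G}(\theta_{n},\mathbb{T}_{n})$ converges weakly in $L^{1}$ to the barycentre $\int_{\mathbb{R}_{+}\times\mathbb{R}^{6}}\mathrm{G}(\lambda_{1},\lambda_{2})\,\di\nu_{(t,x)}(\lambda_{1},\lambda_{2})$ for a.e.\ $(t,x)$. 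By the $L^{2}$-bound this weak-$L^{1}$ limit coincides with the weak-$L^{2}$ limit $\gamma$ obtained in the first step; inserting the product structure of $\nu_{(t,x)}$ and using Fubini gives the reduced form $\gamma(t,x)=\int_{\mathbb{R}^{6}}\mathrm{G}(e^{\tau}(t,x),\lambda_{2})\,\di\mu_{(t,x)}(\lambda_{2})$.

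The hard part is the correct bookkeeping of concentration versus oscillation. One must argue that it is the \emph{a.e.} convergence of $\theta_{n}$ — and not merely the weak-$*$ convergence $\di\theta_{n}\rightharpoonup\di\theta$, which, as discussed above, may carry the singular part $g$ — that produces the Dirac factor $\delta_{e^{\tau}}$ in $\nu_{(t,x)}$, and that the possible singular part of $\theta$ is invisible to the Young measure and hence plays no role in $\gamma$. Equally essential is the equi-integrability: it is the growth bound \eqref{bound_G} combined with the $L^{2}$-bound on $\mathbb{T}_{n}$ — rather than the mere continuity of $\mathrm{G}$ — that excludes a defect measure in the identification of $\gamma$ and upgrades the weak-$L^{1}$ convergence to weak-$L^{2}$ convergence.
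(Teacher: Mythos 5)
Your argument is correct and follows essentially the same route as the paper: establish $L^{2}$-boundedness of $\mathrm{G}(\theta_{n},\mathbb{T}_{n})$ from \eqref{bound_G} and Proposition~\ref{Prop_estimates}, generate the Young measure $\nu_{(t,x)}$ from the pair $(\theta_{n},\mathbb{T}_{n})$, use the pointwise a.e.\ convergence $\theta_{n}\to e^{\tau}$ to factor $\nu_{(t,x)}=\delta_{e^{\tau}(t,x)}\otimes\mu_{(t,x)}$, and then invoke the fundamental theorem on Young measures (with equi-integrability supplied by the $L^{2}$-bound) to identify the weak limit $\gamma$ with the barycentre of $\nu_{(t,x)}$ under $\mathrm{G}$. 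The paper carries out exactly these steps citing M\"uller's lecture notes for the tensor-product splitting and the representation theorem; your added commentary on why the singular part of $\theta$ is invisible to the Young measure and on the role of equi-integrability is consistent with, and clarifies, the paper's reasoning.
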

\begin{proof}
From Remark \ref{Remark_G(theta_n,T_n)_boundedness}, the sequence $\{\textrm{G}(\theta_{n},\mathbb{T}_{n})\}_{n=1}^{\infty}$ is uniformly bounded in \\ $L^{2}(0,\mathfrak{T};L^{2}(\Omega;\mathcal{S}^{3}))$, and hence, up to a subsequence, 
  \begin{align}\label{G(theta_n,T_n)_conv_gamma}
    \textrm{G}(\theta_{n},\mathbb{T}_{n})
	\underset{n\to\infty}{\rightharpoonup}
	\gamma &\text{\quad in \quad} L^{2}(0,\mathfrak{T};L^{2}(\Omega;\mathcal{S}^{3}))\,.
    \end{align}
To identify the limit, we employ Young measures. From Proposition \ref{Prop_estimates} we know that both of the sequences $\{\theta_{n}\}_{n=1}^{\infty}$ and $\{\mathbb{T}_{n}\}_{n=1}^{\infty}$ are bounded, which ensures that $\{ (\theta_{n},\mathbb{T}_{n})\}_{n=1}^{\infty}$ generates a Young measure
$$
\nu_{(t,x)}:(0,\mathfrak{T})\times\Omega \to \mathcal{M}(\mathbb{R_+}\times\mathbb{R}^{6})\,,
$$
while $\{\mathbb{T}_{n}\}_{n=1}^{\infty}$ generates a Young measure
\[
\mu_{(t,x)}:(0,\mathfrak{T})\times\Omega \to \mathcal{M}(\mathbb{R}^{6})\,.
\]
Moreover, $\theta_{n} \underset{n\to\infty}{\to} e^{\tau}$ pointwise a.e. in $(0,\mathfrak{T})\times\Omega$, so that (see \cite[Corollary 3.4]{MUller1999})
\[
\nu_{(t,x)} = \delta_{e^{\tau}(t,x)} \otimes \mu_{(t,x)} \quad \text{a.e. in \quad} (0,\mathfrak{T})\times\Omega\,.
\]
The function $\textrm{G}$ is continuous, i.e., $\textrm{G}\in C(\mathbb{R_+}\times\mathbb{R}^{6})$; hence, applying Theorem 3.1 in \cite{MUller1999} to a measurable set $A \subset (0,\mathfrak{T})\times\Omega$, it follows that
\begin{align}\label{G(theta_n,T_n)_conv_measure}
\textrm{G}(\theta_{n},\mathbb{T}_{n}) \underset{n\to\infty}{\rightharpoonup}  \overline{\textrm{G}} \quad \text{in} \quad L^{1}(A)\,,
\end{align}
where
\[
\overline{\textrm{G}}(t,x) = \langle \nu_{(t,x)}, \textrm{G} \rangle = \int_{\mathbb{R_+}\times\mathbb{R}^{6}} \textrm{G}(\lambda_{1},\lambda_{2}) \, \di \nu_{(t,x)}(\lambda_{1},\lambda_{2})\,.
\]
By uniqueness of the weak limit, from \eqref{G(theta_n,T_n)_conv_gamma} and \eqref{G(theta_n,T_n)_conv_measure}, we deduce
\begin{align*}
\gamma = \int_{\mathbb{R_+}\times\mathbb{R}^{6}} \textrm{G}(\lambda_{1},\lambda_{2}) \, \di \nu_{(t,x)}(\lambda_{1},\lambda_{2})
= \int_{\mathbb{R}^{6}} \textrm{G}(e^{\tau},\lambda_{2}) \, \di \mu_{(t,x)}(\lambda_{2})\,.
\end{align*}
This completes the proof.
\end{proof}
\subsection{Limit passage as $n\to\infty$}
In this subsection we carry out the limit passage as $n\to\infty$ in the approximate system. The procedure will be performed equation by equation, in order to carefully justify the convergence of all terms involved. We begin with the approximate force balance equation \eqref{approx_n_1}.

For any $N_{N}\in\mathbb{N}$ and $\varphi\in C_{0}^{\infty}\big([0,\mathfrak{T});C_{0}^{\infty}(\Omega;\mathbb{R}^{3}) \big)$ of the form
$\displaystyle\varphi(t)=\varphi_{1}(t)\sum\limits_{m=1}^{N_{N}}\varphi_{m}\,,$ with $\varphi_{1}\in C_{0}^{\infty}([0,\mathfrak{T}))$ and $\varphi_{m}\in\mathcal{W}^{n}$ we can write
\begin{align*}
        \int\limits_{0}^{\mathfrak{T}}\int\limits_{\Omega} u_{n,tt}\varphi\, \di x \di t
        = -\int\limits_{0}^{\mathfrak{T}}\int\limits_{\Omega} u_{n,t}\varphi_{t} \,\di x \di t
        -  \int\limits_{\Omega} u_{n,t}(0)\varphi(0) \di x\,.  
\end{align*}
 Using the convergences \eqref{convergenc_with_n_1}-\eqref{convergenc_with_n_3} together with the fact that $u_{n,t}(0)=P_{\mathcal{W}^{n}}u_{1}\underset{n\to\infty}{\to}u_{1}$ in $L^{2}(\Omega;\mathbb{R}^{3})$, we arrive at
        \begin{align*}
        \int\limits_{0}^{\mathfrak{T}}\int\limits_{\Omega} u_{n,tt}\varphi\, \di x \di t
        \underset{n\to\infty}{\to}
        -\int\limits_{0}^{\mathfrak{T}}\int\limits_{\Omega} u_{t}\varphi_{t} \,\di x \di t
        -  \int\limits_{\Omega} u_{1}\varphi(0) \,\di x\,,
    \end{align*}
    which holds in particular for all $\varphi\in C_{0}^{\infty}([0,\mathfrak{T})\times\Omega)$. Moreover, applying again the convergences \eqref{convergenc_with_n_1}-\eqref{convergenc_with_n_3} together with property \eqref{Property_f_n} we deduce
\begin{align*}      \int\limits_{0}^{\mathfrak{T}}\int\limits_{\Omega}\mathbb{T}_{n}:\varepsilon(\varphi)\,\di x \di t
        \underset{n\to\infty}{\to}
        \int\limits_{0}^{\mathfrak{T}}\int\limits_{\Omega}\mathbb{T}:\varepsilon(\varphi)\,\di x \di t\,,
    \end{align*}
        \begin{align*}
        \int\limits_{0}^{\mathfrak{T}}\int\limits_{\Omega}\theta_{n}:\diver{ }\varphi\,\di x \di t
        \underset{n\to\infty}{\to}
        \int\limits_{0}^{\mathfrak{T}}\langle\theta,\diver\varphi\rangle_{[\mathcal{M}^{+};C](\overline{\Omega})}\, \di t\,,
    \end{align*}
    and
\begin{align*}
        \int\limits_{0}^{\mathfrak{T}}\int\limits_{\Omega}f_{n}\varphi\,\di x \di t
        \underset{n\to\infty}{\to}
        \int\limits_{0}^{\mathfrak{T}}\int\limits_{\Omega}f\varphi\,\di x \di t
\end{align*}
for all test functions $\varphi\in C_{0}^{\infty}\big([0,\mathfrak{T});H_{0}^{1}(\Omega;\mathbb{R}^{3}) \big)$ of the form $\displaystyle\varphi(t)=\varphi_{1}(t)\sum\limits_{m=1}^{N_{N}}\varphi_{m}$, where $\varphi_{1}\in C_{0}^{\infty}([0,\mathfrak{T}))$ and $\varphi_{m}\in\mathcal{W}^{n}$ and consequently also for all $\varphi\in C_{0}^{\infty}([0,\mathfrak{T})\times\Omega)$.

Passing to the limit as $n\to\infty$ in \eqref{approx_n_1}, we thus obtain 
\begin{align*}
	   -\int\limits_{0}^{\mathfrak{T}}\int\limits_{\Omega} u_{t}\varphi_{t}\,\di x  \di t
	   &+\int\limits_{0}^{\mathfrak{T}}\int\limits_{\Omega}\mathbb{T}:\varepsilon(\varphi)\,\di x \di t
	   -\int\limits_{0}^{\mathfrak{T}}\langle\theta,\diver\varphi\rangle_{[\mathcal{M}^{+};C](\overline{\Omega})}\,\di t  \nonumber\\&\quad
	   = \int\limits_{0}^{\mathfrak{T}}\int\limits_{\Omega} f\varphi\,\di x\di t
       +\int\limits_{\Omega}u_{1}\varphi(0,x)\,\di x
\end{align*}
for every function $\varphi\in C_{0}^{\infty}([0,\mathfrak{T})\times\Omega)$, where $\theta\in L^{\infty}(0,\mathfrak{T};\mathcal{M}^{+}(\overline{\Omega}))$. 

Having already derived the limit form of the force balance equation \eqref{approx_n_1}, we now aim to pass to the limit in the remaining relations, namely \eqref{approx_n_2} and \eqref{entropy_for_n_1}. Before doing so, however, let us take a closer look at the convergences established in the previous subsection, as they will play a crucial role in the forthcoming arguments.

Due to the two-level Galerkin approximation, the approximate inelastic constitutive equation \eqref{approx_n_2} is satisfied pointwise almost everywhere, i.e.,
\begin{align}\label{approx_n_2_pointwise}
\mathbb{C}^{-1}\mathbb{T}_{n,t}(t,x)
	+\textrm{G}(\theta_{n}, \mathbb{T}_{n})(t,x)
	=\varepsilon(u_{n,t}(t,x))
\end{align}
for almost all $(t,x)\in (0,\mathfrak{T})\times\Omega$.	By integrating equality \eqref{approx_n_2_pointwise} over the time interval $(0,t)$, with $t\leq\mathfrak{T}$ we obtain 
\begin{align}\label{approx_n_2_pointwise_1}
\varepsilon(u_{n}(t,x))=\mathbb{C}^{-1}\mathbb{T}_{n}(t,x)+\varepsilon(u_{n}(0,x))-\mathbb{C}^{-1}\mathbb{T}_{n}(0,x)
	+\int\limits_0^t\textrm{G}(\theta_{n}, \mathbb{T}_{n})\,\di\tau\,.	
\end{align}
Since the sequence $\{\mathbb{C}^{-1}\mathbb{T}_{n}\}_{n=1}^\infty$ is bounded in  $L^{\infty}(0,\mathfrak{T};L^{2}(\Omega;\S^3))$ and the sequence\\ $\{\textrm{G}(\theta_{n},\mathbb{T}_{n})\}_{n=1}^\infty$ is bounded in $L^{2}(0,\mathfrak{T};L^{2}(\Omega;\mathcal{S}^{3}))$, we conclude that $\{\varepsilon(u_{n})\}_{n=1}^\infty$ is bounded in $L^{\infty}(0,\mathfrak{T};L^{2}(\Omega;\S^3))$, and hence $\{u_{n}\}_{n=1}^\infty$ is bounded in $L^{\infty}(0,\mathfrak{T};H^{1}_0(\Omega;\R^3))$. Therefore
$$\mathbb{C}^{-1}\mathbb{T}_{n}-\varepsilon(u_{n}) \underset{n\to\infty}{\rightharpoonup} \mathbb{C}^{-1}\mathbb{T}-\varepsilon(u) \quad\textrm{in}\quad L^{2}((0,\mathfrak{T})\times\Omega)\,.$$
Additionally, from \eqref{approx_n_2} and Remark \ref{Remark_G(theta_n,T_n)_boundedness}, we infer that 
$$\norm{\big(\mathbb{C}^{-1}\mathbb{T}_{n}-\varepsilon(u_{n})\big)_{t}}_{L^{2}((0,\mathfrak{T})\times\Omega)}\leq C(\mathfrak{T})$$
and hence
$$\big(\mathbb{C}^{-1}\mathbb{T}_{n}-\varepsilon(u_{n})\big)_{t}  \underset{n\to\infty}{\rightharpoonup} \gamma \quad\textrm{in}\quad L^{2}((0,\mathfrak{T})\times\Omega)\,,$$
where $\gamma$ is defined in Proposition \ref{Remark_G_convergence}. Indeed, from \eqref{approx_n_2_pointwise} we have
\begin{align}\label{after_approx_n_2}
	\int\limits_{0}^{\mathfrak{T}}\int\limits_{\Omega} \big(\mathbb{C}^{-1}\mathbb{T}-\varepsilon(u)\big)_{t}:\psi\,\di x \di t
	&=    
    -\int\limits_{0}^{\mathfrak{T}}\int\limits_{\Omega}\gamma:\psi\,\di x\di t 
\end{align}
for every function $\psi\in C_{0}^{\infty}([0,\mathfrak{T})\times\Omega)$.

It remains to perform the limit passage in the entropy equation, namely \eqref{entropy_for_n_1}. Based on the convergences \eqref{convergenc_with_n_1}-\eqref{convergenc_with_n_3}, it follows that there exists a measure $\sigma\in \mathcal{M}^{+}([0,\mathfrak{T}]\times\overline{\Omega})$ such that
\begin{align*}
		\abs{\nabla\tau_{n}}^{2}\overset{*}{\underset{n\to\infty}{\rightharpoonup}} \sigma & \text{ \quad in \quad} \mathcal{M}^{+}([0,\mathfrak{T}]\times\overline{\Omega})\,.
 \end{align*} 
Hence, in view of Proposition \ref{Prop_estimates}, we obtain that the sequence 
\begin{equation*}
 \big\{ e^{-\tau_{n}}\textrm{T}_{n}\big(\textrm{G}(e^{\tau_{n}},\mathbb{T}_{n}):\mathbb{T}_{n}\big)\big\}_{n=1}^\infty\qquad \mathrm{ is\, bounded\, in }\quad L^{1}((0,\mathfrak{T})\times\Omega)
\end{equation*} 
and there exists a measure $\tilde{\sigma}\in \mathcal{M}^{+}([0,\mathfrak{T}]\times\overline{\Omega})$ such that
\begin{align*}
		e^{-\tau_{n}}\textrm{T}_{n}\big(\textrm{G}(e^{\tau_{n}},\mathbb{T}_{n}):\mathbb{T}_{n}\big)\overset{*}{\underset{n\to\infty}{\rightharpoonup}} \tilde{\sigma} & \text{ \quad in \quad} \mathcal{M}^{+}([0,\mathfrak{T}]\times\overline{\Omega})\,.
 \end{align*} 
Ultimately, by taking the limit $n\rightarrow\infty$ in equation \eqref{entropy_for_n_1}, we arrive at
\begin{align}
  \label{entropy_weak_form}
		-\int\limits_{0}^{\mathfrak{T}}\int\limits_{\Omega}(\tau+&\diver{u})\phi_{t}\,\di x\di t
		+\int\limits_{0}^{\mathfrak{T}}\int\limits_{\Omega}\nabla\tau \nabla \phi\,\di x\di t+\int\limits_{\Omega} (\tau_{0}+\diver{u_{0}})\phi(0,x)\,\di x		
        \nonumber\\&
		=\langle \tilde{\sigma},\phi \rangle_{[\mathcal{M}^{+};C]([0,\mathfrak{T}]\times\overline{\Omega})}+\langle \sigma,\phi \rangle_{[\mathcal{M}^{+};C]([0,\mathfrak{T}]\times\overline{\Omega})}
\end{align}
for all  $\phi\in C_{0}^{\infty}([0,\mathfrak{T})\times\Omega)$. Hence, the limit passage in \eqref{entropy_for_n_1} is justified. Together with the results obtained for \eqref{approx_n_1} and \eqref{approx_n_2}, this completes the passage to the limit in all equations of the approximate system; thus, all limit relations required in the statement of Theorem \ref{TWIERDZENIE} have been established.
We additionally show that $\sigma\geq\abs{\nabla\tau}^{2}$. Following the explanation from \cite{CieslakMuha_3D}, for $\phi\in C([0,\mathfrak{T}]\times\overline{\Omega})$, we define a mapping 
\begin{align*}
I_{\phi}(f)=\int\limits_{(0,\mathfrak{T})\times\Omega}f^{2}\phi\,\di x\di t\,.
\end{align*}
This mapping is continuous, convex for $\phi\geq 0$, and lower semicontinuous in strong $L^{2}((0,\mathfrak{T})\times\Omega)$ topology. Using Mazur's lemma, we obtain that $I_{\phi}$ is weakly lower semicontinuous, and we can write 
\begin{align*}
\int\limits_{(0,\mathfrak{T})\times\Omega}\abs{\nabla\tau}^{2}\phi\,\di x\di t=I_{\phi}(\nabla\tau)\leq \liminf\limits_{n\to\infty}I_{\phi}(\nabla\tau_{n})=
\lim\limits_{n\to\infty}\int\limits_{(0,\mathfrak{T})\times\Omega}\abs{\nabla\tau_{n}}^{2}\phi\,\di x=
\langle \sigma,\phi \rangle\,,    
\end{align*}
which proves the statement.
\subsection{Total Energy Dissipation Inequality and Proof of the Main Result}
As a final part of the proof of Theorem \ref{TWIERDZENIE}, we show that the inequality of total energy dissipation \eqref{ineq_tot_en_diss} holds for the solution $\big( u,\mathbb{T},\tau \big)$. This section thus represents the last step in the proof of our main result.

Before doing so, let us first observe that, by testing $\varphi=u_{n,t}$ in \eqref{approx_n_1}, $\psi=\mathbb{T}_{n}$ in \eqref{approx_n_2} and $\phi=1$ in \eqref{approx_n_3}, and performing calculations analogous to those in Subsection 1.2, we arrive at the following total energy dissipation inequality for the approximate solutions  $\big(u_{n,t},\mathbb{T}_{n},\theta_{n}\big)$, valid for all 
$t\in[0,\mathfrak{T}]$:
\begin{align}\label{approx_n_ineq}
    &\int\limits_{\Omega}\big(\theta_{n}(t)-\tau_{n}(t)\big)\,\di x
    +\frac{1}{2} \int\limits_{\Omega}\abs{u_{n,t}(t)}^{2}\,\di x
    +\frac{1}{2}\int\limits_{\Omega}\mathbb{C}^{-1}\mathbb{T}_{n}(t):\mathbb{T}_{n}(t)\,\di x
    \nonumber\\&\quad
    +\int\limits_{0}^{t}\int\limits_{\Omega}\abs{\nabla\tau_{n}}^{2} \,\di x \di t    
    \leq\int\limits_{0}^{t}\int\limits_{\Omega}f_{n}u_{n,t}\,\di x \di t
    +\int\limits_{\Omega}\big(\theta_{0n}-\ln\theta_{0n}\big)\,\di x
     \\&\quad
    +\frac{1}{2} \int\limits_{\Omega}\abs{u_{1n}}^{2}\,\di x
    +\frac{1}{2}\int\limits_{\Omega}\mathbb{C}^{-1}\mathbb{T}_{0n}:\mathbb{T}_{0n}\,\di x\,,\nonumber
\end{align}
where $\tau_{n}=\ln \theta_{n}$.

We now pass to the limit inferior as $n\to\infty$ in \eqref{approx_n_ineq}. The weak convergences of both sequences $\{u_{n,t}\}_{n=1}^{\infty}$ and $\{\mathbb{T}_{n}\}_{n=1}^{\infty}$ in $L^{\infty}(0,\mathfrak{T};L^{2}(\Omega))$, established in \eqref{convergenc_with_n_1} and \eqref{convergenc_with_n_2}, imply that, for almost every $t\in[0,\mathfrak{T}]$, the corresponding terms in \eqref{approx_n_ineq} are weakly lower semicontinuous along subsequences. Consequently, we obtain
\begin{align*}    
    \liminf\limits_{n\to\infty} \frac{1}{2} \int\limits_{\Omega}\abs{u_{n,t}(t)}^{2}\,\di x\geq
     \frac{1}{2} \int\limits_{\Omega}\abs{u_{t}(t)}^{2}\,\di x\,,
    \end{align*}
    \begin{align*}
     \liminf\limits_{n\to\infty}\frac{1}{2}\int\limits_{\Omega}\mathbb{C}^{-1}\mathbb{T}_{n}(t):\mathbb{T}_{n}(t)\,\di x\geq
     \frac{1}{2}\int\limits_{\Omega}\mathbb{C}^{-1}\mathbb{T}(t):\mathbb{T}(t)\,\di x\,.
\end{align*}
Moreover, from \eqref{convergenc_with_n_3}, we also have a weak convergence of $\{\theta_{n}\}_{n=1}^{\infty}$ in $L^{\infty}(0,\mathfrak{T};\mathcal{M}^{+}(\overline{\Omega}))$ and from \eqref{Conv_theta_n_pointwise} a pointwise convergence for almost every $(t,x)\in(0,\mathfrak{T})\times\Omega$. Let us define $E$ as the set of all points where $\{\theta_{n}\}_{n=1}^{\infty}$ does not converge to $\theta$ and for each $t\in(0,\mathfrak{T})$, let $E_{t}=\{ x\in\overline{\Omega}: (t,x)\in E \}$. Clearly, $\abs{E}=0$ in $(0,\mathfrak{T})\times\Omega$, which implies that $\abs{E_{t}}=0$ in $\Omega$ for almost every $t\in(0,\mathfrak{T})$. 
 
 Therefore, for almost every $t\in (0,\mathfrak{T})$ and any non-negative $\phi\in C(\overline{\Omega})$, we have
\begin{align*} 
    \lim\limits_{n\to\infty}\int\limits_{\Omega}\theta_{n}(t)\phi\,\di x
    &=\lim\limits_{n\to\infty}\big( \int\limits_{\Omega\setminus E_{t}}\theta_{n}(t)\phi\,\di x + \int\limits_{E_{t}}\theta_{n}(t)\phi\,\di x \big)
    =\lim\limits_{n\to\infty}\int\limits_{\Omega\setminus E_{t}}\theta_{n}(t)\phi\,\di x
    \nonumber\\&=\langle \theta(t),\phi\rangle _{[\mathcal{M}^{+},C](\overline{\Omega})}
    =\int\limits_{\Omega}\phi\,\di \theta(t)\,,
\end{align*}
where the last equality follows from the decomposition of the measure\\ $\displaystyle\di \theta(t)=e^{\tau}(t)\di x +g(t)$ with $g(t)\geq0$ being the singular part supported on sets of Lebesgue measure zero.

In particular, taking $\phi=1$ as a test function yields
\begin{align*}    
    \int\limits_{\Omega}\theta_{n}(t)\,\di x
     \underset{n\to\infty}{\to}
    \int\limits_{\Omega}\di\theta(t)\,.
\end{align*}
Next, by \eqref{strong_tau_n}, the positive sequence $\{\tau_{n}\}_{n=1}^{\infty}$ converges strongly in $L^{2}((0,\mathfrak{T})\times\Omega)$. Consequently, for almost every $t\in[0,\mathfrak{T}]$, a subsequence converges in $L^{1}(\Omega)$, which yields
\begin{align*}
    \int\limits_{\Omega}\tau_{n}(t)\,\di x
     \underset{n\to\infty}{\to}
     \int\limits_{\Omega}\tau(t)\,\di x\,.
\end{align*}
From the weak semi-continuity of the norm in $L^{2}((0,\mathfrak{T})\times\Omega)$ and Remark \ref{Remark_tau_n_convergence} we get
\begin{align*}
\liminf\limits_{n\to\infty}\int\limits_{0}^{t}\int\limits_{\Omega}\abs{\nabla\tau_{n}}^{2} \di x \di t
        \geq 
        \int\limits_{0}^{t}\int\limits_{\Omega}\abs{\nabla\tau}^{2} \di x \di t   
\end{align*}
and from the convergences \eqref{Property_f_n} and \eqref{convergenc_with_n_1} it follows that
\begin{align*}
        \int\limits_{0}^{t}\int\limits_{\Omega}f_{n}u_{n,t}\,\di x \di t
        \underset{n\to\infty}{\to}
        \int\limits_{0}^{t}\int\limits_{\Omega}fu_{t}\,\di x \di t\,.
\end{align*}   
For the initial conditions, from \eqref{init_cond_nk_2} - \eqref{init_cond_nk_4}, we obtain
    \begin{align*}
        \frac{1}{2} \int\limits_{\Omega}\abs{u_{1n}}^{2}\di x
        \underset{n\to\infty}{\to}
        \frac{1}{2} \int\limits_{\Omega}\abs{u_{1}}^{2}\di x\,,
    \end{align*}
    \begin{align*}
       \frac{1}{2}\int\limits_{\Omega}\mathbb{C}^{-1}\mathbb{T}_{0n}:\mathbb{T}_{0n}\,\di x
        \underset{n\to\infty}{\to}
        \frac{1}{2}\int\limits_{\Omega}\mathbb{C}^{-1}\mathbb{T}_{0}:\mathbb{T}_{0}\,\di x
    \end{align*}
    and
\begin{align*}
       \int\limits_{\Omega}\theta_{0n}\,\di x
        \underset{n\to\infty}{\to}
        \int\limits_{\Omega}\theta_{0}\,\di x
        =\int\limits_{\Omega}e^{\tau_{0}}\,\di x\,.
\end{align*}   
Moreover, $\theta_{0n}\to \theta_0$ strongly in $L^1(\Omega;\R)$, hence
\begin{align*}
       \int\limits_{\Omega}\tau_{0n}\,\di x
        \underset{n\to\infty}{\to}
        \int\limits_{\Omega}\tau_{0}\,\di x\,.
\end{align*} 
Finally, we conclude that the total energy dissipation inequality \eqref{ineq_tot_en_diss} holds for the solution $\big( u,\mathbb{T},\tau \big)$. This establishes all the required properties, and therefore, the proof of Theorem \ref{TWIERDZENIE} is complete. With this, the main result of the paper is fully proven.\\[2ex]
  
{\bf Declaration of competing interest}\\[1ex]
The authors declare that they have no known competing financial interests or personal relationships that could have appeared to influence the work reported in this paper.\\[1ex]
  
\bibliographystyle{abbrv}

\begin{thebibliography}{10}

\bibitem{Alber}
H.-D. Alber.
\newblock {\em Materials with memory}, volume 1682 of {\em Lecture Notes in
  Mathematics}.
\newblock Springer-Verlag, Berlin, 1998.

\bibitem{AlexandreVillani2002}
R.~Alexandre and C.~Villani.
\newblock On the boltzmann equation for long-range interactions.
\newblock {\em Communications on Pure and Applied Mathematics}, 55:30--70,
  2002.

\bibitem{BarlesRoubicek}
S.~Bartels and T.~Roub{\'{\i}}{\v{c}}ek.
\newblock Thermo-visco-elasticity with rate-independent plasticity in isotropic
  materials undergoing thermal expansion.
\newblock {\em ESAIM: Mathematical Modelling and Numerical Analysis},
  45(3):477--504, 2011.

\bibitem{Bies_Cieslak_1D}
P.~Bies and T.~Cie\'{s}lak.
\newblock Global-in-{T}ime {R}egular {U}nique {S}olutions with {P}ositive
  {T}emperature to {O}ne-{D}imensional {T}hermoelasticity.
\newblock {\em SIAM Journal on Mathematical Analysis}, 55(6):7024--7038, 2023.

\bibitem{Bies_cieslak_string}
P.~Bies and T.~Cie\'slak.
\newblock Time-asymptotics of a heated string.
\newblock {\em Mathematische Annalen}, 391:5941--5964, 2025.

\bibitem{Blanchard}
D.~Blanchard.
\newblock Truncations and monotonicity methods for parabolic equations.
\newblock {\em Nonlinear Analysis: Theory, Methods and Applications.},
  21(10):725--743, 1993.

\bibitem{BlanchardGuibe00}
D.~Blanchard and O.~Guib{\'e}.
\newblock Existence of a solution for a nonlinear system in
  thermoviscoelasticity.
\newblock {\em Advances in Differential Equations.}, 5(10-12):1221--1252, 2000.

\bibitem{Chelminski2002}
K.~Che{\l}mi{\'n}ski.
\newblock Global existence of weak-type solutions for models of monotone type
  in the theory of inelastic deformations.
\newblock {\em Mathematical Methods in the Applied Sciences},
  25(14):1195--1230, 2002.

\bibitem{chegwia1}
K.~Che{\l}mi{\'n}ski and P.~Gwiazda.
\newblock On the model of {B}odner-{P}artom with nonhomogeneous boundary data.
\newblock {\em Mathematische Nachrichten}, 214:5--23, 2000.

\bibitem{ChelminskiNeffOwczarek2014}
K.~Che{\l}mi\'{n}ski, P.~Neff, and S.~Owczarek.
\newblock The {A}rmstrong--{F}rederick cyclic hardening plasticity model with
  {C}osserat effects.
\newblock {\em Journal of Differential Equations}, 256(11):3497--3523, 2014.

\bibitem{ChelminskiOwczarekthermoI}
K.~Che{\l}mi\'{n}ski and S.~Owczarek.
\newblock Renormalised solutions in thermo-visco-plasticity for a
  {N}orton-{H}off type model. {P}art {I}: {T}he truncated case.
\newblock {\em Nonlinear Analysis: Real World Applications}, 28:140--152, 2016.

\bibitem{ChelminskiOwczarekthermoII}
K.~Che{\l}mi{\'n}ski and S.~Owczarek.
\newblock Renormalised solutions in thermo-visco-plasticity for a
  {N}orton-{H}off type model. {P}art {II}: the limit case.
\newblock {\em Nonlinear Analysis: Real World Applications.}, 31:643--660,
  2016.

\bibitem{ChelminskiOwczarekWielgos25}
K.~Che{\l}mi{\'n}ski, S.~Owczarek, and K.~Wielgos.
\newblock A thermo-visco-elastic model for nonlinear materials of
  {N}orton-{H}off type with {$L^1$} data.
\newblock {\em Journal of Differential Equations}, 441(113482):1--43, 2025.

\bibitem{ChristoforouGalanopoulouTzavaras2019}
C.~Christoforou, M.~Galanopoulou, and A.~E. Tzavaras.
\newblock Measure-valued solutions for the equations of polyconvex adiabatic
  thermoelasticity.
\newblock {\em Discrete and Continuous Dynamical Systems}, 39(11):6175--6206,
  2019.

\bibitem{ChristoforouTzavaras2018}
C.~Christoforou and A.~E. Tzavaras.
\newblock Relative {E}ntropy for {H}yperbolic--{P}arabolic {S}ystems and
  {A}pplication to the {C}onstitutive {T}heory of {T}hermoviscoelasticity.
\newblock {\em Archive for Rational Mechanics and Analysis}, 229:1--52, 2018.

\bibitem{CieslakMuha_3D}
T.~Cie\'slak, B.~Muha, and S.~Trifunovi\'c.
\newblock Global weak solutions in nonlinear 3{D} thermoelasticity.
\newblock {\em Calculus of {V}ariations and {P}artial {D}ifferential
  {E}quations}, 63(26), 2024.

\bibitem{ClaesLankeitWinkler2025}
L.~Claes, J.~Lankeit, and M.~Winkler.
\newblock A model for heat generation by acoustic waves in piezoelectric
  materials: Global large-data solutions.
\newblock {\em Mathematical Models and Methods in Applied Sciences},
  35(11):2465--2512, 2025.

\bibitem{DAFERMOS1982}
C.~Dafermos and L.~Hsiao.
\newblock Global smooth thermomechanical processes in one-dimensional nonlinear
  thermoviscoelasticity.
\newblock {\em Nonlinear Analysis: Theory, Methods and Applications},
  6(5):435--454, 1982.

\bibitem{Dafermos_Hsiao_blowup}
C.~M. Dafermos and L.~Hsiao.
\newblock Development of singularities in solutions of the equations of
  nonlinear thermoelasticity.
\newblock {\em Quarterly of Applied Mathematics}, 44:463--474, 1986.

\bibitem{DiPernaLions1988}
R.~J. DiPerna and P.-L. Lions.
\newblock On the {F}okker--{P}lanck--{B}oltzmann equation.
\newblock {\em Communications in Mathematical Physics}, 120:1--23, 1988.

\bibitem{FeireislNovotny2009}
E.~Feireisl and A.~Novotny.
\newblock {\em Singular Limits in Thermodynamics of Viscous Fluids}.
\newblock Advances in Mathematical Fluid Mechanics. Birkh\"auser, Basel, 2009.

\bibitem{PhDFilip}
P.~Gwiazda, F.~Z. Klawe, and A.~{\'S}wierczewska-Gwiazda.
\newblock Thermo-visco-elasticity for the {M}r\'oz model in the framework of
  thermodynamically complete systems.
\newblock {\em Discrete and Continuous Dynamical Systems - Series S},
  7(5):981--991, 2014.

\bibitem{Jiang_1990}
S.~Jiang.
\newblock Global existence of smooth solutions in one-dimensional nonlinear
  thermoelasticity.
\newblock {\em Proceedings of the Royal Society of Edinburgh: Section A
  Mathematics}, 115(3-4):257--274, 1990.

\bibitem{Milke_Roubicek}
A.~Mielke and T.~Roub{\'{\i}}{\v{c}}ek.
\newblock Thermoviscoelasticity in {K}elvin-{V}oigt {R}heology at {L}arge
  {S}trains.
\newblock {\em Archive for Rational Mechanics and Analysis}, 238(1):1--45,
  2020.

\bibitem{MROZ67}
Z.~Mr\'oz.
\newblock On the description of anisotropic workhardening.
\newblock {\em Journal of the Mechanics and Physics of Solids}, 15(3):163--175,
  1967.

\bibitem{MUller1999}
S.~M{\"u}ller.
\newblock {\em Variational models for microstructure and phase transitions},
  pages 85--210.
\newblock Springer Berlin Heidelberg, Berlin, Heidelberg, 1999.

\bibitem{owcz2}
S.~Owczarek.
\newblock Convergence of a monotonisation procedure for a non-monotone
  quasi-static model in poroplasticity.
\newblock {\em Journal of Mathematical Analysis and Applications},
  364(2):1541--1563, 2010.

\bibitem{barowcz2}
S.~Owczarek and L.~Bartczak.
\newblock On renormalized solutions for thermomechanical problems in perfect
  plasticity with damping forces.
\newblock {\em Mathematics and Mechanics of Solids}, 24(4):1030--1053, 2019.

\bibitem{KlaweOwczarek}
S.~Owczarek and F.~Z. Klawe.
\newblock Thermo-visco-elasticity for {N}orton-{H}off-type models with
  {C}osserat effects.
\newblock {\em Mathematical Methods in the Applied Sciences}, 39:4629--4641,
  2016.

\bibitem{OwczarekWielgos23}
S.~Owczarek and K.~Wielgos.
\newblock On a thermo-visco-elastic model with nonlinear damping forces and
  {$L^1$} temperature data.
\newblock {\em Mathematical Methods in the Applied Sciences}, 46(9):9966--9999,
  2023.

\bibitem{PaoliPetrov}
L.~Paoli and A.~Petrov.
\newblock Solvability for a class of generalized standard materials with
  thermomechanical coupling.
\newblock {\em Nonlinear Analysis: Real World Applications}, 14(1):111--130,
  2013.

\bibitem{Qin_2001}
Y.~Qin.
\newblock Global existence and asymptotic behaviour of the solution to the
  system in one-dimensional nonlinear thermoviscoelasticity.
\newblock {\em Quarterly of Applied Mathematics}, 59:113--142, 2001.

\bibitem{Racke_1D}
R.~Racke.
\newblock Initial boundary value problems in one-dimensional non-linear
  thermoelasticity.
\newblock {\em Mathematical Methods in the Applied Sciences}, 10(5):517--529,
  1988.

\bibitem{Racke1990}
R.~Racke.
\newblock On the cauchy problem in nonlinear 3-d thermoelasticity.
\newblock {\em Mathematische Zeitschrift}, 203(4):649--682, 1990.

\bibitem{RACKE_Zheng1997}
R.~Racke and S.~Zheng.
\newblock Global {E}xistence and {A}symptotic {B}ehavior in {N}onlinear
  {T}hermoviscoelasticity.
\newblock {\em Journal of Differential Equations}, 134(1):46--67, 1997.

\bibitem{RossiRoubicek2013}
R.~Rossi and T.~Roub{\'\i}{\v c}ek.
\newblock Adhesive contact delaminating at mixed mode, its thermodynamics and
  analysis.
\newblock {\em Interfaces and Free Boundaries}, 15(1):1--37, 2013.

\bibitem{Roubicekbook}
T.~Roub{\'{\i}}{\v{c}}ek.
\newblock {\em Nonlinear partial differential equations with applications},
  volume 153 of {\em International Series of Numerical Mathematics}.
\newblock Birkh\"auser Verlag, Basel, 2005.

\bibitem{Roubicek}
T.~Roub{\'{\i}}{\v{c}}ek.
\newblock Thermodynamics of perfect plasticity.
\newblock {\em Discrete and Continuous Dynamical Systems - Series S},
  6(1):193--214, 2013.

\bibitem{RoubicekL1}
T.~Roub\'{\i}\v{c}ek.
\newblock Thermo-visco-elasticity at small strains with {$L^1$}-data.
\newblock {\em Quarterly of Applied Mathematics}, 67(1):47--71, 2009.

\bibitem{Serrin1996}
J.~Serrin.
\newblock The equations of continuum mechanics and the laws of thermodynamics.
\newblock {\em Meccanica}, 31:547--563, 1996.

\bibitem{Shibata1995}
Y.~Shibata.
\newblock Global in time existence of small solutions of nonlinear
  thermoviscoelastic equations.
\newblock {\em Mathematical Methods in the Applied Sciences}, 18(11):871--895,
  1995.

\bibitem{Slemrod_ARMA}
M.~Slemrod.
\newblock Global existence, uniqueness, and asymptotic stability of classical
  smooth solutions in one-dimensional non-linear thermoelasticity.
\newblock {\em Archive for {R}ational {M}echanics and {A}nalysis},
  76(2):97--133, 1981.

\bibitem{Suquet1981}
P.~Suquet.
\newblock Sur les {\'e}quations de la plasticit{\'e}: existence et
  r{\'e}gularit{\'e} des solutions.
\newblock {\em Journal de M{\'e}canique}, 20:3--39, 1981.

\bibitem{Cauchy_first}
C.~A. Truesdell.
\newblock Cauchy and the modern mechanics of continua.
\newblock {\em Revue d'histoire des sciences}, 45(1):5--24, 1992.

\bibitem{YoshikawaPawlowZajaczkowski2009}
S.~Yoshikawa, I.~Paw{\l}ow, and W.~M. Zaj{\k a}czkowski.
\newblock A quasilinear thermoviscoelastic system for shape memory alloys with
  temperature dependent specific heat.
\newblock {\em Communications in Pure and Applied Analysis}, 8(3):1093--1115,
  2009.

\end{thebibliography}
	\footnotesize{

	}
\end{document}